\def\restriction#1#2{\mathchoice
              {\setbox1\hbox{${\displaystyle #1}_{\scriptstyle #2}$}
              \restrictionaux{#1}{#2}}
              {\setbox1\hbox{${\textstyle #1}_{\scriptstyle #2}$}
              \restrictionaux{#1}{#2}}
              {\setbox1\hbox{${\scriptstyle #1}_{\scriptscriptstyle #2}$}
              \restrictionaux{#1}{#2}}
              {\setbox1\hbox{${\scriptscriptstyle #1}_{\scriptscriptstyle #2}$}
              \restrictionaux{#1}{#2}}}
\def\restrictionaux#1#2{{#1\,\smash{\vrule height .8\ht1 depth .85\dp1}}_{\,#2}} 
\newtheorem{lemme}{Lemma}[section]
\newtheorem{prop}{Proposition}[section]
\newtheorem{theoreme}{Theorem}[section]
\newtheorem{Prop}[theoreme]{Proposition}
\newtheorem{definition}[theoreme]{Definition}
\newtheorem{lem}[theoreme]{Lemma}
\newtheorem{Rem}[theoreme]{Remark}
\newtheorem{coro}[theoreme]{Corollary}
\newtheorem{cor}[prop]{Corollary}
\newtheorem{Theor}[theoreme]{Theorem}
\newtheorem{Theo}[prop]{Theorem}
\newtheorem{Rq}[prop]{Remark}
\newtheorem{Lem}[prop]{Lemma}
\newtheorem{Theorem}[lemme]{Theorem}
\newtheorem{Rk}[lemme]{Remark}
\title{Steklov isospectrality of conformal metrics}
\author{Benjamin Florentin}
\address{Université de Lorraine, CNRS, Inria, IECL, F-54000 Nancy, France}
\email{benjamin.florentin@univ-lorraine.fr}
\begin{document}
\begin{abstract}
The Steklov spectrum of a smooth compact Riemannian manifold $(M,g)$ with boundary is the set of eigenvalues counted with multiplicities of its Dirichlet-to-Neumann map (DN map).\\This article is devoted to the Steklov spectral inverse problem of recovering the metric $g$, up to natural gauge invariance, from its Steklov spectrum. Positive results are established in dimension $n\geq 3$ for conformal metrics under the assumption that the geodesic flow on the boundary is Anosov with simple length spectrum. The paper combines wave trace formula techniques with the injectivity of the geodesic X-ray transform for functions on closed Anosov manifolds. It is shown that knowledge of the Steklov spectrum determines the jet at the boundary of the underlying Riemannian metric within its conformal class. In this particular context, this parallels the well-known results of the Calder{\'o}n problem, where we are given the entire Dirichlet-to-Neumann map instead. As a simple corollary, assuming real-analyticity of the conformal factor, Steklov isospectral metrics must coincide. \\Using similar arguments,  we are also able to prove under the same assumption of hyperbolicity of the geodesic flow on the boundary, that generically any smooth potential $q$ can be recovered from the Steklov spectrum, in the sense that its jet at the boundary is determined by the spectrum of the DN map for the Schrödinger operator with potential $q$. Consequently, in this case, two analytic Steklov isospectral potentials must be equal.   
\end{abstract}
\maketitle
\tableofcontents

\section{Introduction}\label{Intro}

\subsection{Inverse problems for the Steklov spectrum}\label{Subsection I.1}

Let $(M,g)$ be a smooth compact Riemannian manifold of dimension $n\geqslant 2$ with smooth boundary $\partial M$. \\Recall that the \textit{Steklov eigenvalue problem} consists in finding scalars $\sigma$ for which the system 

$$\left\lbrace
\begin{aligned}
-\Delta_{g}u &= 0\hspace{0.3cm}\text{in $M$}\\
\partial_{\nu}u &= \sigma u\hspace{0.2cm}\text{on $\partial M$} \\
\end{aligned}\right.$$

\medskip\noindent admits a non-trivial solution in $H^{1}(M)$; where $\Delta_{g}$ is the Laplace-Beltrami operator on $(M,g)$ and $\partial_{\nu}$ denotes the Riemannian outward normal derivative along the boundary.
This famous problem, introduced at the beginning of the 20th century by the mathematician V.A. Steklov (see \cite{KKStek} for background), lies at the interface between spectral geometry and mathematical physics. We refer the reader to the excellent surveys \cite{GiPo} and \cite{ColGiGordSher} for details. 
\\Equivalently, one can consider the eigenvalues of the Dirichlet-to-Neumann map (DN map) $\Lambda_{g}$ defined by 
$$\begin{array}{l|rcl}
\Lambda_{g} : & H^{1/2}(\partial M) & \longrightarrow & H^{-1/2}(\partial M) \\
			& f & \longmapsto & \restriction{\partial_{\nu}u}{\partial M} \end{array}\hspace{0.1cm},$$ 
where $u$ is the unique solution in $H^{1}(M)$ of the Dirichlet problem

$$\left\lbrace
\begin{aligned}
-\Delta_{g}u &= 0 \\
\restriction{u}{\partial M} &= f \\
\end{aligned}\right.\hspace{0.3cm}\cdot$$

\medskip\noindent The DN map is a positive self-adjoint elliptic and classical (in the sense of \eqref{eq1}) pseudodifferential operator of order $1$ on the closed manifold $\partial M$. (see \cite{Tay}) \\In particular, its spectrum is discrete and is given by a sequence of eigenvalues 

$$0=\sigma_{0}<\sigma_{1}\leq\sigma_{2}\leq...\rightarrow \infty\hspace{0.1cm}.$$

\medskip\noindent The multiset
$$\mathrm{spec}(\Lambda_{g}):=\left(\sigma_{k}\right)_{k\geqslant 0}$$ 

\medskip\noindent defines the so-called Steklov spectrum of the Riemannian manifold $(M,g)$.  
\\In this context, following the celebrated paper \say{Can you hear the shape of a drum ?} (cf. \cite{Kac}), one may also ask whether the Steklov spectrum determine its underlying Riemannian structure. In other words, does the Steklov spectrum determine the metric up to a natural gauge invariance ? This problem is called the Steklov inverse problem, we refer once again to the surveys \cite{GiPo} and \cite{ColGiGordSher}.
\\It is well known that the answer is generally negative; in fact, there are many non-isometric Steklov isospectral (with the same Steklov spectrum) manifolds; the constructions are mainly adaptations of two general methods originally introduced to construct Laplace isospectral manifolds : Sunada's technique and the torus action technique. We refer to \cite{GordWebb} and references therein for details.
\vspace{0.2cm}

In this paper, we will study isospectral sets of conformal metrics, that is, metrics of the form $h=cg$ where $c:M\rightarrow \mathbb{R}$ is a smooth positive function in $M$. In that case, the Steklov spectral inverse problem boils down to showing that the map
$$h\mapsto \mathrm{spec}(\Lambda_{h})$$ 
is injective on the space of metrics conformal to $g$ on $M$. Of course the problem is difficult since this map is highly non-linear and it is also possible to find many counterexamples, particularly when considering warped product manifolds (cf. \cite{Gend} and references therein). Let us point out that the adaptation of Sunada's method cited above (and more precisely of \cite[Theorem 1]{BrPeYa}) to produce counterexamples is unclear in dimension greater than $2$.
The aim of this paper is to establish 'positive' results, i.e. to exhibit a certain class of metrics for which it is possible to recover the geometry from the Steklov spectrum. The natural approach is to look for \textit{spectral invariants}, i.e geometric quantities that can be determined by the spectrum. These are usually provided by considering functions constructed from eigenvalues ; the first example is given by the well-known Weyl's asymptotic on the spectral counting function, which shows that the volume of the boundary can be recovered from the Steklov spectrum (see \textbf{Proposition \ref{Prop II.1}}).

\vspace{0.2cm}

More generally, a complete set of spectral invariants can be defined by considering 'special' functions, in particular, the trace of the heat kernel (for the Steklov spectrum, see for example \cite{PolSher}) or the trace of the wave operator, recalled in Section \ref{Subsection II.2}. We refer the reader to the excellent survey \cite{Zeld} for further discussions on this topic. On a closed Riemannian manifold, the spectral invariants arising from the trace of the wave operator associated with any self-adjoint positive elliptic pseudodifferential operator are given by the trace formula of Duistermaat and Guillemin \cite[Theorem 4.5]{GD}, providing a careful analysis of its singularities as a distribution in time. In the case of the Laplace-Beltrami operator, this formula was first established independently by Y.Colin de Verdière \cite{Cdv1}, \cite{Cdv2} and then by Chazarain \cite{Cha}.
Note that the exploitation of these trace formulas requires important assumptions about the dynamics of the Hamiltonian flow associated with the principal symbol of the operator under consideration, e.g. geodesic flows in the case of the Laplacian and the DN map. For geodesic flows, a particularly appropriate framework is provided by the negative curvature assumption or, more generally, by the Anosov property (see Section \ref{Section II}).
\vspace{0.2cm}

One of the most important examples of application is given by Guillemin and Kazhdan \cite{GK}, who were able to show the absence of non-trivial Laplace isospectral deformations (i.e one-parameter families $(g_{s})$ of smooth metrics preserving the Laplacian spectrum) on surfaces of negative curvature, and then in any dimension in \cite{GK2} (see also \cite{CrSh}). \\Recently, Paternain, Salo and Uhlmann \cite{PatSalUhl2} were able to extend these results to the case of Anosov surfaces.
The restriction of this spectral inverse problem to isospectral deformations is actually a linearization and leads more precisely to a linear tomography problem. The latter asks whether it is possible to recover a tensor from its integrals along closed geodesics, which corresponds to an injectivity property of the so-called X-ray transform (see Section \ref{Subsection II.1}).  \\Inspired by this approach, this paper also considers isospectral deformations for the Steklov spectrum :

\begin{definition}\label{Def I.1}
A deformation of $g$ is a smooth one-parameter family of (smooth) Riemannian metrics $\left(g_{s}\right)_{0\leqslant s\leqslant \varepsilon}$ on $M$ such that $g_{0}=g$. 
It is called Steklov isospectral if the spectrum of $\Lambda_{g_{s}}$ is preserved : 
$$\forall s\in \left[ 0,\varepsilon\right]\hspace{0.1cm},\hspace{0.1cm}\mathrm{spec}\left( \Lambda_{g_{s}}\right)=\mathrm{spec}\left( \Lambda_{g}\right)\hspace{0.1cm}.$$
\end{definition}
\medskip\noindent It is possible to establish, under natural assumptions, that isospectral deformations of conformal metrics are trivial in the sense that each metric automatically coincides at the boundary (see \textbf{Proposition \ref{Prop III.1}}). But, on the other hand, the trace formula techniques do not allow us to prove such a result for the non-linear problem (i.e. considering two conformal metrics). However, we point out that this can be shown under more restrictive assumptions on the geometry, exploiting the volume of the boundary as a spectral invariant, even if this is not very satisfactory in practice because very little spectral information is used, and consequently the geometric assumptions required are relatively strong. We refer the reader to Appendix \ref{App A}.

\vspace{0.2cm}
More generally, one cannot expect to obtain positive results on the general Steklov inverse problem without sufficiently strong assumptions, especially since, by comparison, we start with even less information than in the famous anisotropic Calder{\'o}n inverse problem, which consists in recovering the metric $g$ from knowledge of the DN map $\Lambda_{g}$ and is still far from being fully understood despite a vast literature on the subject. The reader may wish to consult, for example, the survey \cite{Daud1} and all references therein. Regarding this problem, a standard step is to show a boundary determination result, namely that the DN map determines, in boundary normal coordinates, all the normal derivatives of the metric tensor $g$ at the boundary at any order $j\geqslant 0$ (and thus the full Taylor series of $g$ at the boundary in these coordinates).
We would like to mention in particular the important papers \cite{LU}, \cite{LasUhl}, \cite{LasTayUhl}, \cite{DDSF} and the recent \cite{Cek}, \cite{LasLiiSal}. 

\vspace{0.2cm}
In this work, we establish similar boundary determination results for the Steklov spectrum. In particular, combining the boundary determination mentioned above, the use of wave trace invariants and the injectivity of the X-ray transform, we show that, under natural assumptions on the geodesic flow of $\restriction{g}{\partial M}$, the Steklov spectrum determines all normal covariant derivatives at the boundary of the metric $g$ within a fixed conformal class. Note that we are working here in dimension $n\geqslant 3$, the case of dimension $2$ is specific due to the well-known conformal invariance (see \textbf{Remark \ref{Rk III.9}}) ; we will simply observe that the Steklov inverse problem considered in this paper then reduces to showing that two conformal and isospectral metrics coincide at the boundary which, in the non-linear case, is a priori as difficult (and not true in general) as in higher dimensions. This point is actually the most troublesome one in the boundary determination, since wave invariants provide no useful information and then, as mentioned above, we are only able to obtain results under much more restrictive geometric assumptions by using more global spectral invariants, in this case the volume of the boundary (see Appendix \ref{App A}).

\vspace{0.2cm}
An important idea in this paper is to exploit the subprincipal symbol of the DN map (see the beginning of Section \ref{Section IV} and the references therein for a reminder of this notion), which unlike the case of the Laplace-Beltrami operator, is non-zero. This makes it possible to obtain more information from the principal wave invariants given by \textbf{Theorem \ref{Th II.3}}, under the additional assumption that the length spectrum of $\partial M$ is simple ; recall that a metric is said to have \textit{simple length spectrum} if all its closed geodesics have different lengths. It is well-known that this assumption is satisfied by generic Riemannian metrics even in the non-Anosov case (cf. \cite{Ab}, \cite[Lemma 4.4.3]{Kli} or \cite{An}).
\\To the author's knowledge, this is the first example of a spectral inverse problem exploiting this observation on the subprincipal symbol. A similar yet interesting example, involving the recovery of a potential $q$ from the spectrum of the associated DN map $\Lambda_{g,q}$, will be discussed in part \ref{Section V}. \\It should also be noted that the literature establishing positive results for the Steklov spectral inverse problem is extremely limited, and this paper provides, to our knowledge, the first rigidity results for such a broad class of geometries.

\vspace{0.2cm}
To conclude, note that another version of the wave trace formula in the context of isospectral connections is used in a similar way in the recent \cite{CekLef} to show an injectivity result for the \textit{spectrum map} (see \cite[Theorem 1.1]{CekLef}) solving in particular positively, on closed negatively curved manifolds with simple length spectrum, the inverse spectral problem for the magnetic Laplacian, namely recovering the magnetic potential from the spectrum.
On the Steklov spectral inverse problem, let us also mention the work of \cite{JolSha} on compact and simply connected surfaces with boundary, \cite{Gend} and \cite{Daud} within a conformal class of certain warped product manifolds or the very recent \cite{CekSif} for the magnetic Dirichlet-to-Neumann map on surfaces. Finally, for the Steklov spectrum in the context of conformal metrics, \cite{Jam} shows that on any compact manifold of dimension $n\geqslant 3$ with boundary, it is possible to prescribe any finite part of the Steklov spectrum within a given conformal class.

\vspace{0.2cm}
Throughout the article, for any Riemannian manifold $(M,g)$ with boundary, we will note $g^{\circ}:=\restriction{g}{\partial M}$ the induced metric on the closed manifold $\partial M$.

\subsection{Main results}\label{Subsection I.2}

This section gathers all the main spectral rigidity results obtained in the paper. Turning first to the non-linear problem in a conformal class, we need to require equality of the metrics at the boundary, information that we are unable to obtain directly via the wave trace invariants and which is actually not verified in general. The following result is essentially established in Section \ref{Section IV} and more particularly in \textbf{Theorem \ref{Th IV.6}}.

\begin{theoreme}\label{Th I.1}
    Let $(M,g)$ be a smooth compact Riemannian manifold of dimension $n\geq 3$ with boundary. Assume that $(\partial M,g^{\circ})$ is Anosov with simple length spectrum.
    \\Let $c:M\rightarrow \mathbb{R}^{+}_{*}$ be a smooth function satisfying $\restriction{c}{\partial M} \equiv 1$. If the Steklov spectra agree $$\mathrm{spec}(\Lambda_{cg})=\mathrm{spec}(\Lambda_{g})\hspace{0.1cm},$$ 
    
    \medskip\noindent then for all $j\geq 1$, $\partial_{\nu}^{j}c=0$ on $\partial M$ and the two operators coincide modulo a smoothing operator : $$\Lambda_{cg}\equiv \Lambda_{g}\hspace{0.1cm}.$$ 
    
    \medskip\noindent In particular, if $c$ is real-analytic and $M$ is connected, then $c\equiv 1$ on $M$.
\end{theoreme}
\noindent The proof relies on a recursive procedure using an adapted version of the aforementioned Duistermaat-Guillemin trace formula and the injectivity of the X-ray transform. Initialization, which requires the use of Liv\v sic theory, is therefore dealt with separately in Section \ref{Section III} and established more precisely in \textbf{Theorem \ref{Th III.18}}.
\\As already mentioned, note that it is possible to remove this condition of equality of metrics at the boundary by adding some relatively strong geometric assumptions, due to the lack of usable spectral invariants. This point is covered in Appendix \ref{App A}.

\medskip\noindent For the linear problem, i.e., considering isospectral deformations, the major difference is that it is possible to establish that for all $s\in\left[ 0,\varepsilon\right]$, 

\begin{equation}\left\{\begin{aligned}
 &\restriction{c_{s}}{\partial M} \equiv 1 \\
&\restriction {\partial_{\nu}c_{s}}{\partial M} \equiv 0 \\
\end{aligned}\right.\label{eq}\end{equation}

\medskip\noindent by differentiating with respect to the parameter $s$ (see \textbf{Proposition \ref{Prop III.1}} and \textbf{Proposition \ref{Th III.13}}).
Then, from there, everything else works the same way at higher order just as in the non-linear problem, so we actually get :

\begin{theoreme}
Let $(M,g)$ be a smooth compact Riemannian manifold of dimension $n\geq 3$ with boundary. Assume that $(\partial M,g^{\circ})$ is Anosov with simple length spectrum. Let $(g_{s})_{0\leqslant s\leqslant \varepsilon}$ be a Steklov isospectral deformation of the metric $g$ s.t there exists a family of smooth functions $\left( c_{s}:M\rightarrow \mathbb{R^{+}_{*}}\right)_{0\leqslant s\leqslant \varepsilon} $ satisfying for any $s\in\left[ 0,\varepsilon\right]$, 
$$g_{s}=c_{s}g\hspace{0.1cm}.$$
If the deformation map $s\mapsto c_{s}$ is real-analytic, then, 
\begin{equation}
    \forall s\in\left[ 0,\varepsilon\right]\hspace{0.1cm},\hspace{0.1cm}\restriction{c_{s}}{\partial M} \equiv 1 \hspace{0.1cm}.\label{eqdeform}
\end{equation}

\medskip\noindent In addition, if \eqref{eqdeform} is verified, then for all $s\in\left[ 0,\varepsilon\right]$, for all $j\geq 1$, $\partial_{\nu}^{j}c_{s}=0$ on $\partial M$ and the two operators coincide modulo a smoothing operator :
$$\Lambda_{g_{s}}\equiv \Lambda_{g}\hspace{0.1cm}.$$ 
In particular, if the conformal factors $c_{s}$ are real-analytic and $M$ is connected, then $c_{s}\equiv 1$ on $M$, for all $s\in\left[ 0,\varepsilon\right]$.
\end{theoreme}  

For the sake of completeness, let us also mention that a simple adaptation of the method established in Section \ref{Section IV} allows us to obtain an analogue of \textbf{Theorem \ref{Th I.1}} in the case of the Steklov inverse problem with potential, i.e. regarding the DN map $\Lambda_{g,q}$ : 

\begin{theoreme}\label{Th I.6}
    Let $(M,g)$ be a smooth compact Riemannian manifold of dimension $n\geq 3$ with boundary. Assume that $(\partial M,g^{\circ})$ is Anosov with simple length spectrum.
    \\Let $q_{1}$ and $q_{2}$ be two smooth potentials on $M$. If the Steklov spectra agree $$\mathrm{spec}(\Lambda_{g,q_{1}})=\mathrm{spec}(\Lambda_{g,q_{2}})\hspace{0.1cm},$$ 

\medskip\noindent then for all $j\geq 1$, $\partial_{\nu}^{j}q_{1}=\partial_{\nu}^{j}q_{2}$ on $\partial M$ and the two operators coincide modulo a smoothing operator : $$\Lambda_{g,q_{1}}\equiv \Lambda_{g,q_{2}}\hspace{0.1cm}.$$ 

\medskip\noindent In particular, if $q_{1}$ and $q_{2}$ are both real-analytic and $M$ is connected, then $q_{1}=q_{2}$ on $M$.    
\end{theoreme}
\medskip\noindent This spectral inverse problem with potential is discussed in Section \ref{Section V}.
\vspace{0.2cm}
\\To conclude, as mentioned in the previous section, recall that the geometric assumption of simple length spectrum required to exploit the wave trace invariants (\textbf{Theorem \ref{Th II.3}} and \textbf{Theorem \ref{Th IV.2}}) is generic in the space of Anosov metrics which makes all the above results, and in particular the non-linear ones, more consistent.
In addition, note that all these results are, to the author's knowledge, the first positive results for the Steklov spectral inverse problem in a general framework.

\begin{Rem}
The following points are worth mentioning here : 
\begin{itemize}
    \item In all these results there is no need to assume boundary connectedness, since we rely on wave trace formula techniques on closed manifolds, and these techniques do not require connectedness of the underlying manifold.
    \item The Anosov assumption is made only on the boundary ; the entire manifold does not need to be Anosov, it can be anything.
    \item The class of manifolds considered here seems rather special, although natural for the method, so it is important to note that one can easily construct such manifolds and there are plenty of them. As a simple example, take any closed Anosov manifold $(M,g)$ of dimension $n\geq 2$ and consider a cylinder $$N:=\left(M,g\right)\times\left(\left[-1,1\right],dt^{2}\right)$$ equipped with the product metric. Then, as mentioned above, it is well-known that one can generically perturb the metric $g$ so that it satisfies the simple length spectrum assumption while preserving the Anosov property.
\end{itemize}
\end{Rem}

\subsection{Plan of the paper}

Section \ref{Section II} introduces the fundamental tools used throughout the paper, mainly the injectivity of the X-ray transform on functions in \ref{Subsection II.1} and spectral invariants for the Steklov spectrum in \ref{Subsection II.2}. Section \ref{Section III} deals with the recovery of the normal derivative of the conformal factor at the boundary, exploiting principal wave trace invariants combined with Liv\v sic theory. Concerning the linear problem, $\left(\ref{eq}\right)$ is completely established by \textbf{Proposition \ref{Th III.13}} and \textbf{Proposition \ref{Prop III.1}}. In Section \ref{Section IV}, we present the recursive procedure that allows us to show more generally that one can recover, in boundary normal coordinates, all the higher-order normal derivatives of the conformal factor at the boundary. This completes the proof of \textbf{Theorem \ref{Th I.1}}. To conclude, in Section \ref{Section V}, we discuss other inverse problems on the Steklov spectrum where the techniques used in this paper could be applied and show in particular \textbf{Theorem \ref{Th I.6}} on the spectral problem for the DN map $\Lambda_{g,q}$. Appendix \ref{App A} gives some statements that establish equality at the boundary of two isospectral conformal metrics, the only point that cannot be dealt with by trace formula techniques. Finally, Appendix \ref{App B} provides an appropriate formula for calculating the subprincipal symbol in \textbf{Lemma \ref{Prop III.12}}.

\subsection*{Acknowledgements}
First and foremost, I would like to thank David dos Santos Ferreira for the many discussions that helped me enormously in writing this paper. I am also grateful to Samuel Tapie for enlightening me on certain notions of hyperbolic dynamics and to Mikko Salo for his pertinent comments and advice on the organization of the document. Thanks to Carolyn S. Gordon for her insight on the difficulties of adapting Sunada's method to produce conformal Steklov isospectral metrics in dimension greater than 2. Finally, I would also like to thank Julien Moy for pointing out the reference \cite{PatPat}, as well as the referee for his valuable comments and suggestions, which helped improve this article.

\subsection*{Notations}
Throughout this article, we denote by $(N,h)$ a smooth closed Riemannian manifold and $Vol_{h}(N)$ its Riemannian volume with respect to the metric $h$. In applications, we will systematically take $(N,h)=(\partial M,g^{\circ})$.
\\We will adopt the classical notations $$Xu:=\mathcal{L}_{X}u=\restriction{\frac{d}{dt}}{t=0}\phi_{t}^{*}u$$ for the Lie derivative of the function $u$ along the vector field $X$, where of course $\phi_{t}^{*}u$ denotes the pullback of $u$ by the flow $\phi_{t}$ generated by $X$.
\\Finally, we will denote by $\Psi^{k}(N)$ the set of pseudodifferential operators of order $k$ on the closed manifold $N$.

\section{Preliminaries}\label{Section II}  

Recall that a flow $\left(\phi_{t}\right)$ generated by a smooth vector field $X$ on a closed manifold $\mathcal{M}$ is Anosov if there is a continuous splitting 
$$T\mathcal{M}=\mathbb{R}X\oplus E_{s}\oplus E_{u}\hspace{0.2cm},$$
of the tangent bundle $T\mathcal{M}$ into three flow-invariant subbundles, respectively, tangential to the flow direction, exponentially contracting and expanding. Note that two Anosov flows that are $C^{1}$-close enough are topologically conjugate : This is known as the \textit{structural stability}. In particular, the Anosov property is robust to $C^{1}$-perturbations, it is a $C^{1}$-open condition with respect to the vector field. We refer, for example, to \cite[Section 5.1]{FisherHass} and to the fundamental \cite{dL} for more details.

A particularly important special case, which will be considered exclusively in this paper, is that where $\mathcal{M}$ is the unit tangent bundle (resp. the unit cotangent bundle)
$$SN:=\lbrace (x,v)\in TN\hspace{0.1cm};\hspace{0.1cm}h_{x}(v,v)=1\rbrace$$

$$\text{(resp. $S^{*}N:=\lbrace (x,\xi)\in T^{*}N\hspace{0.1cm};\hspace{0.1cm}h^{-1}_{x}(\xi,\xi)=1\rbrace$})$$

\medskip\noindent of the closed Riemannian manifold $(N,h)$ and $\left(\phi_{t}\right)$ the associated geodesic flow. We say that $(N,h)$, or simply $h$, is Anosov if its geodesic flow is Anosov. \\As already mentioned, any closed Riemannian manifold with negative curvature is Anosov (see \cite[Theorem 5.2.4]{FisherHass}, \cite[Section 17.6]{KH} or \cite{Kni} for a proof).
\\If the manifold is connected, such flows are transitive\footnote{They admit a dense orbit.}, their periodic orbits are dense, and knowledge of a function along these periodic integral curves allows us to reconstruct it up to natural obstructions. This is the purpose of the standard Liv\v sic theory, and in particular of \textbf{Theorem \ref{Th II.1}} below. More recent elements of the so-called non-Abelian Liv\v sic theory are used via \textbf{Proposition \ref{Prop III.14}}. Finally, as mentioned in the Introduction, the Anosov property provides a general framework particularly suited to obtaining spectral rigidity results by using trace formulas (see Section \ref{Subsection II.2}).

\subsection{X-ray transform and Liv\v sic theorem}\label{Subsection II.1}

We begin by recalling a famous theorem for Anosov flows originally due to Liv\v sic \cite[Theorem 1]{Liv} in H\"older regularity $C^{\alpha}$, $\alpha\in\left(0,1\right)$. We also refer to \cite[Appendix]{GK} and to \cite[Theorem 19.2.4]{KH}. Here we need the smooth regularity version proved in \cite[Theorem 2.1]{dL} :

\begin{theoreme}[Liv\v sic \cite{Liv}, De La Llave-Marco-Moriyon \cite{dL}]\label{Th II.1}
Let $\mathcal{M}$ be a closed Riemannian manifold, $f: \mathcal{M}\rightarrow \mathbb{R}$ a smooth function and $\left(\phi_{s}\right)$ a smooth transitive Anosov flow on $\mathcal{M}$ generated by the vector field $X$.
\vspace{0.2cm}
\\The following propositions are equivalent :
\vspace{0.2cm}
\begin{enumerate}
\item For every periodic orbit $\gamma$ of the flow, $$\int_{\gamma}f = 0\hspace{0.1cm}.$$

\item There is $u\in C^{\infty}(\mathcal{M},\mathbb{R})$ such that $f=Xu$.
\end{enumerate}  
\end{theoreme}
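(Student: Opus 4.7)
The plan is first to dispense with the easy direction $(2)\Rightarrow (1)$: assuming $f=Xu$, along any closed orbit $\gamma$ of period $T$ parametrized by $s\mapsto \phi_{s}(x)$ one simply computes
$$\int_{\gamma}f = \int_{0}^{T}\frac{d}{ds}u(\phi_{s}(x))\,ds = u(\phi_{T}(x))-u(x) = 0$$
by the fundamental theorem of calculus and periodicity. The entire content of the theorem is the converse direction, which I would tackle in two stages: first construct a continuous primitive, then upgrade its regularity to $C^{\infty}$.

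For the construction, I would exploit transitivity to fix a point $x_{0}\in\mathcal{M}$ whose forward orbit $\mathcal{O}(x_{0})$ is dense in $\mathcal{M}$ and simply set
$$u(\phi_{t}(x_{0})) := \int_{0}^{t} f(\phi_{s}(x_{0}))\,ds\hspace{0.1cm},\qquad t\in\mathbb{R}\hspace{0.1cm}.$$
This satisfies $Xu=f$ tautologically on $\mathcal{O}(x_{0})$, but to extend $u$ by density to a continuous function on $\mathcal{M}$ I would need to prove that it is uniformly continuous on $\mathcal{O}(x_{0})$. The key tool is the Anosov \emph{closing lemma}: whenever $\phi_{T}(y)$ is sufficiently close to $y$, there is a genuine periodic point $p$ of period approximately $T$ whose orbit exponentially shadows $\{\phi_{s}(y)\}_{s\in [0,T]}$. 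Given two nearby points $\phi_{t_{1}}(x_{0})$ and $\phi_{t_{2}}(x_{0})$ on the dense orbit, applied to the segment between them this produces a periodic orbit $\gamma$ along which $\int_{\gamma}f=0$ by hypothesis; subtracting the two integrals and using the smoothness of $f$ together with the exponential shadowing yields a H\"older bound
$$|u(\phi_{t_{2}}(x_{0}))-u(\phi_{t_{1}}(x_{0}))| \leq C\, d(\phi_{t_{1}}(x_{0}),\phi_{t_{2}}(x_{0}))^{\alpha}$$
for some $\alpha\in(0,1)$. This produces a H\"older extension $\tilde{u}\in C^{\alpha}(\mathcal{M})$ with $X\tilde{u}=f$ in the distributional sense, which is the content of Liv\v sic's original theorem.

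The hard part, and the reason the smoothness refinement of De La Llave-Marco-Moriyon is required here, is bootstrapping from $C^{\alpha}$ to $C^{\infty}$. My plan is to establish smoothness of $\tilde{u}$ separately along the weak stable and weak unstable foliations. For two points $y,z$ on a common stable leaf, $d(\phi_{s}y,\phi_{s}z)\to 0$ exponentially as $s\to +\infty$; combined with the cocycle relation
$$\tilde{u}(y)-\tilde{u}(z) = \tilde{u}(\phi_{s}y)-\tilde{u}(\phi_{s}z) - \int_{0}^{s}\bigl(f(\phi_{\tau}y)-f(\phi_{\tau}z)\bigr)\,d\tau$$
and the smoothness of $f$, one deduces by letting $s\to+\infty$ that $\tilde{u}$ is smooth along each stable leaf; the unstable case is symmetric with $s\to -\infty$. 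A Journ\'e-type regularity lemma, asserting that a continuous function smooth along two transverse H\"older foliations with smooth leaves (here the stable, unstable, and flow directions) is smooth on the ambient manifold, then yields $\tilde{u}\in C^{\infty}(\mathcal{M})$. I expect this elliptic-regularity-along-foliations step to be the main obstacle, since it demands delicate control on the regularity of the stable and unstable holonomies and is the precise point where the smoothness of the Anosov splitting, rather than merely its existence, is genuinely needed.
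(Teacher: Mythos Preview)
The paper does not actually prove this theorem; it is stated as a known result and attributed to Liv\v sic (for the H\"older version) and de la Llave--Marco--Moriy\'on (for the $C^{\infty}$ upgrade), with references to \cite{Liv}, \cite{dL}, \cite{GK} and \cite{KH}. There is therefore no ``paper's own proof'' to compare against.

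That said, your sketch is essentially the standard argument found in those references: define a candidate $u$ along a dense orbit, use the Anosov closing lemma together with the vanishing-period hypothesis to obtain a uniform H\"older estimate and hence a $C^{\alpha}$ extension (this is Liv\v sic's original contribution), then bootstrap regularity by showing smoothness along stable and unstable leaves via the cocycle identity and finally invoke a Journ\'e-type lemma to conclude global smoothness (this is the de la Llave--Marco--Moriy\'on part). One small correction to your last sentence: you write that ``the smoothness of the Anosov splitting, rather than merely its existence, is genuinely needed,'' but the Anosov splitting is in general only H\"older continuous, not smooth. The whole point of Journ\'e's lemma is precisely that it applies to two transverse continuous foliations with uniformly $C^{\infty}$ leaves, so smoothness of the splitting itself is neither available nor required; what is needed is smoothness of $f$ and uniform smoothness of the individual leaves.
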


\medskip\noindent This important result characterizes the kernel of a linear integration operator along periodic orbits, called \textit{X-ray transform}. In the following, we recall the definition of this operator for geodesic flows.

\medskip\noindent Let us denote by $\mathcal{C}$ the set of free homotopy classes of $N$. The latter is in $1$-$1$ correspondence with the set of conjugacy classes of the fundamental group $\pi_{1}(N)$ of $N$ and consequently $\mathcal{C}$ is countable. In addition, if $N$ is Anosov, it is well-known that there exists a unique closed geodesic for $h$ in each free homotopy class (see \cite{Kl}, \cite{Kni} or, more generally, \cite[Theorem 29]{CIP}).
\\Thus in this case, the set $\mathcal{G}$ of periodic orbits of the geodesic flow $(\phi_{t})$ can be identified with $\mathcal{C}$.
The \textit{X-ray transform} on $SN$ is the operator defined by :

$$\begin{array}{l|rcl}
\mathcal{I}^{h}: & C^{0}(SN) & \longrightarrow & \ell^{\infty}\left(\mathcal{C}\right) \\
			& f & \longmapsto & \left(\mathcal{I}^{h}f(\left[\gamma\right])\right)_{\left[\gamma\right]\in\mathcal{C}} \end{array}$$
and 
   
   $$\mathcal{I}^{h}f(\left[\gamma\right]):=\frac{1}{\ell_{h}\left(\gamma_{h}\right)}\int_{0}^{\ell_{h}\left(\gamma_{h}\right)}f\left(\phi_{s}\left(x,v\right)\right)ds\hspace{0.1cm},$$

\medskip\noindent\\where $\gamma_{h}:[0, \ell_{h}\left(\gamma_{h}\right)]\rightarrow N$ denotes any parametrization by arclength of the unique closed geodesic in the free homotopy class $\left[\gamma\right]\in\mathcal{C}$, $\ell_{h}$ the length functional for $h$ and $(x,v)\in SN$ is any point on the closed curve $t\mapsto \left(\gamma_{h}(t),\Dot{\gamma}_{h}(t)\right).$

\begin{Rem}\label{Rq II.2}
The map 

$$\begin{array}{l|rcl}
\mathcal{L}_{h}: & \mathcal{C} & \longrightarrow & \left( 0 , \infty\right) \\
			& \left[\gamma\right] & \longmapsto & \ell_{h}\left(\gamma_{h}\right) \end{array}$$

\medskip\noindent is the well-known marked length spectrum of $(N,h)$.
\end{Rem}

\medskip\noindent We usually consider this operator acting on symmetric tensor fields on $N$ : 
\\Let $S^{m}T^{*}N$ be the bundle of symmetric tensors of order $m\in\mathbb{N}$ on $N$, we refer the reader to the recent books \cite{Lef}, or \cite{Guill-Mazz} for definitions in this formalism.
\\Denoting by $C^{\infty}(N,S^{m}T^{*}N)$ the set of smooth sections of $S^{m}T^{*}N\rightarrow N$, we can see any symmetric tensor field $f$ of order $m$ on $N$ as a function on $SN$ by considering the lift $\pi^{*}_{m}:C^{\infty}(N,S^{m}T^{*}N)\rightarrow C^{\infty}(SN)$ defined by 
$$\pi_{m}^{*}f:(x,v)\mapsto f_{x}\left(v,...,v\right)\hspace{0.1cm},$$ 

\medskip\noindent with the convention that $S^{0}T^{*}N:=N\times\mathbb{R}$ is the trivial bundle, so that $\pi^{*}_{0}$ is simply the pullback by the projection $\pi:SN\rightarrow N$.
\medskip\noindent\\Then with the same notations as above, the \textit{X-ray transform} on symmetric tensors of order $m$ is the operator 

$$\begin{array}{l|rcl}
\mathcal{I}_{m}^{h}:=\mathcal{I}^{h}\circ\pi_{m}^{*} & C^{\infty}(N,S^{m}T^{*}N) & \longrightarrow & \ell^{\infty}\left(\mathcal{C}\right) \\
			& f & \longmapsto & \left(\mathcal{I}^{h}\pi_{m}^{*}f(\left[\gamma\right])\right)_{\left[\gamma\right]\in\mathcal{C}} \end{array}\hspace{0.1cm},$$
   where $$\mathcal{I}^{h}\pi_{m}^{*}f(\left[\gamma\right]):=\frac{1}{\ell_{h}\left(\gamma_{h}\right)}\int_{0}^{\ell_{h}\left(\gamma_{h}\right)}f_{\gamma_{h}(t)}\left(\Dot{\gamma}_{h}(t),...,\Dot{\gamma}_{h}(t)\right)dt\hspace{0.1cm}.$$
\\As mentioned in the Introduction, the problem considered here will mainly involve $\mathcal{I}_{0}^{h}$, the X-ray transform on functions which is injective in the case of Anosov manifolds :

\begin{theoreme}[\cite{DairbSha}]\label{Th II.5}
Suppose $h$ is Anosov. Then for any $f\in C^{\infty}(N)$, 

$$\mathcal{I}_{0}^{h}f=0\hspace{0.2cm}\Rightarrow\hspace{0.2cm}f=0\hspace{0.1cm}.$$   
\end{theoreme}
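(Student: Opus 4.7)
The plan is to reduce the injectivity statement to a cohomological transport problem solved by Liv\v sic's theorem, and then exploit the fiberwise symmetries of $SN$. Set $F := \pi_{0}^{*}f \in C^{\infty}(SN)$; the hypothesis $\mathcal{I}_{0}^{h}f = 0$ asserts precisely that $\int_{\gamma} F\,dt = 0$ for every closed orbit $\gamma$ of the geodesic flow. A smooth Anosov flow is transitive, so \textbf{Theorem \ref{Th II.1}} applies and produces $u \in C^{\infty}(SN)$ solving the transport equation $Xu = F$.

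Next I would decompose $u$ along the vertical spherical-harmonic structure of the fibration $SN \to N$. Writing $L^{2}(SN) = \bigoplus_{m \geq 0} \Omega_{m}$ for the decomposition into eigenspaces of the vertical Laplacian, one expands $u = \sum_{m \geq 0} u_{m}$ with $u_{m} \in \Omega_{m} \cap C^{\infty}$, and the geodesic vector field splits \`a la Guillemin-Kazhdan as $X = X_{+} + X_{-}$ with $X_{\pm} : \Omega_{m} \to \Omega_{m \pm 1}$. Since $F \in \Omega_{0}$, projecting $Xu = F$ onto each $\Omega_{k}$ yields the triangular system
$$X_{-} u_{1} = F, \qquad X_{+} u_{m-1} + X_{-} u_{m+1} = 0 \quad \text{for every } m \geq 1.$$
If we can prove $u_{m} = 0$ for all $m \geq 1$, then $u = u_{0} = \pi^{*}v$ for some $v \in C^{\infty}(N)$, and the transport equation becomes $\pi_{1}^{*}dv = X(\pi^{*}v) = F$. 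The left-hand side lies in $\Omega_{1}$ while the right-hand side lies in $\Omega_{0}$, so both must vanish, which gives $f \equiv 0$.

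The crux is therefore to kill the higher modes $u_{m}$, $m \geq 1$. Here I would invoke the Pestov-type energy identity of Guillemin-Kazhdan, which in each mode relates $\|X_{+}u_{m}\|_{L^{2}}^{2}$ and $\|X_{-}u_{m}\|_{L^{2}}^{2}$ modulo curvature and horizontal-gradient contributions, and then exploit the Anosov hypothesis to absorb the sign-indefinite terms. This absorption step is the main obstacle: on a general closed manifold the Pestov identity alone is inconclusive since curvature can have any sign, and hyperbolicity must be used in a genuinely dynamical way, typically via a positive quadratic form on $T(SN)$ built from the stable/unstable splitting (equivalently, via the absence of conjugate points guaranteed along Anosov geodesics). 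This is precisely the strategy of Dairbekov-Sharafutdinov; once the modewise vanishing is established, the recursion through the system above closes the proof.
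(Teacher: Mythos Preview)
The paper does not actually prove this theorem; it is quoted from \cite{DairbSha} and then used as a black box. So there is no in-paper argument to compare against directly. That said, the paper's own proof of \textbf{Theorem \ref{Th III.17}} displays exactly the machinery behind the \cite{DairbSha} result, so it is a fair benchmark.

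Your ingredients are the right ones: Liv\v sic to produce $u\in C^{\infty}(SN)$ with $Xu=\pi_{0}^{*}f$, then a Pestov-type identity, with the Anosov hypothesis entering through the absence of conjugate points. Where your sketch goes astray is the intermediate goal. You set out to prove $u_{m}=0$ for all $m\geq 1$ via a modewise Guillemin--Kazhdan recursion; this is both stronger than what is needed and not how the argument actually runs on a general Anosov manifold. The modewise estimates $\lVert X_{+}u_{m}\rVert\gtrsim\lVert X_{-}u_{m}\rVert$ are a negative-curvature tool; for merely Anosov metrics there is no such per-degree control, and the ``absorption step'' you flag as the crux cannot be carried out mode by mode.

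The genuine proof is shorter and bypasses the vertical Fourier decomposition entirely. From $Xu=\pi_{0}^{*}f$ one has $\overset{v}{\nabla}(Xu)=0$, and the Pestov identity (cf.\ the display in the proof of \textbf{Theorem \ref{Th III.17}}, or \cite[Proposition 2.2]{PatSalUhl}) reads
\[
0=\lVert\overset{v}{\nabla}Xu\rVert_{L^{2}}^{2}
=\underbrace{\lVert X\overset{v}{\nabla}u\rVert_{L^{2}}^{2}-\langle R\overset{v}{\nabla}u,\overset{v}{\nabla}u\rangle_{L^{2}}}_{\geq 0\ \text{on Anosov (no conjugate points)}}+(n-1)\lVert Xu\rVert_{L^{2}}^{2}.
\]
Both terms on the right are therefore zero; in particular $\pi_{0}^{*}f=Xu=0$, hence $f\equiv 0$. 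No spherical-harmonic recursion, no statement about the individual $u_{m}$, is required.
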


The conclusion of \textbf{Theorem \ref{Th II.5}} is a simple special case of a more general notion of injectivity on symmetric tensors of order $m$, called s-injectivity. The literature on this subject is quite extensive; we refer, for example, to \cite{DairbSha}, \cite{CrSh} and the more recent \cite{PatSalUhl2} and \cite{Guillarmou}.  Its study in the context of general Anosov manifolds is still largely open, at least in dimension $n\geqslant 3$, even if major progress has recently been made by Ceki{\'c} and Lefeuvre in \cite{CekiLef}.

\subsection{Spectral invariants for the DN map}\label{Subsection II.2}

As in the Introduction, let us consider $\left(M,g\right)$ a smooth compact Riemannian manifold of dimension $n\geqslant 2$ with boundary $(N,h):=(\partial M,g^{\circ})$.
The aim of this section is to introduce the spectral invariants used throughout this paper. The first and probably the best known, which will be used in Appendix \ref{App A}, follows from the asymptotic of the spectral counting function of the DN map ; namely applying standard results of \cite[Chapter 29]{Horm2} we have the following distribution of Steklov eigenvalues :

\begin{Prop}[Weyl's law]\label{Prop II.1}
$$\#(\sigma_{k}\leq \sigma)=\frac{\mathrm{Vol}(\mathbb{B}^{n-1})}{(2\pi)^{n-1}}\mathrm{Vol}_{g^{\circ}}(\partial M)\sigma^{n-1}+\mathcal{O}(\sigma^{n-2})\hspace{0.1cm},$$ 
where $\mathbb{B}^{n-1}$ denotes the unit ball in $\mathbb{R}^{n-1}$.
\end{Prop}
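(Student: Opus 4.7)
The plan is to apply a general result of H\"ormander on the asymptotic distribution of eigenvalues of positive self-adjoint elliptic classical pseudodifferential operators on closed manifolds. Indeed, as already recalled in the Introduction, the DN map $\Lambda_{g}$ is a positive self-adjoint elliptic classical pseudodifferential operator of order $1$ on the closed manifold $\partial M$, so the framework of \cite[Chapter 29]{Horm2} applies directly.

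First I would identify the principal symbol $\sigma_{1}(\Lambda_{g})$, which is well-known to coincide with the principal symbol of $\sqrt{-\Delta_{g^{\circ}}}$, namely
$$\sigma_{1}(\Lambda_{g})(x,\xi)=|\xi|_{g^{\circ}}:=\sqrt{(g^{\circ})^{-1}_{x}(\xi,\xi)},\hspace{0.2cm}(x,\xi)\in T^{*}\partial M\setminus\{0\}.$$
This is a classical computation via boundary normal coordinates (the Dirichlet problem is microlocally reduced to an ODE in the normal variable whose outgoing root has this symbol).

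Next I would invoke H\"ormander's sharp Weyl law: for a positive self-adjoint elliptic classical pseudodifferential operator $P$ of order $m>0$ on a closed $d$-dimensional manifold $N$, the eigenvalue counting function satisfies
$$\#\{\lambda_{k}(P)\leq \lambda\}=\frac{1}{(2\pi)^{d}}\mathrm{Vol}_{T^{*}N}\bigl\{(x,\xi)\in T^{*}N\ :\ \sigma_{m}(P)(x,\xi)\leq \lambda\bigr\}+\mathcal{O}(\lambda^{(d-1)/m}),$$
where the volume is with respect to the canonical symplectic volume on $T^{*}N$. Applied to $P=\Lambda_{g}$ with $m=1$ and $d=n-1$, this gives a remainder of order $\mathcal{O}(\sigma^{n-2})$, matching the claimed error term.

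Finally I would compute the leading coefficient explicitly. The level set $\{(x,\xi)\in T^{*}\partial M:|\xi|_{g^{\circ}}\leq \sigma\}$ is the co-ball bundle of radius $\sigma$, and in local coordinates its symplectic volume factorizes as a fiber integral. A direct computation yields
$$\mathrm{Vol}_{T^{*}\partial M}\bigl\{(x,\xi)\ :\ |\xi|_{g^{\circ}}\leq \sigma\bigr\}=\mathrm{Vol}(\mathbb{B}^{n-1})\,\mathrm{Vol}_{g^{\circ}}(\partial M)\,\sigma^{n-1},$$
by the change of variables $\xi\mapsto \sigma\xi$ in each cotangent fiber combined with the identity $\int_{\partial M}\sqrt{\det g^{\circ}}\,dx=\mathrm{Vol}_{g^{\circ}}(\partial M)$. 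Dividing by $(2\pi)^{n-1}$ gives exactly the announced leading term. There is no genuine obstacle here beyond correctly gathering these standard ingredients; the only mildly delicate point is that invoking the sharp remainder $\mathcal{O}(\sigma^{n-2})$ from \cite[Chapter 29]{Horm2} requires no dynamical assumption on the geodesic flow of $(\partial M,g^{\circ})$, so the statement holds in full generality.
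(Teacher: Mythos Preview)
Your proposal is correct and follows exactly the approach indicated in the paper, which simply invokes the standard results of \cite[Chapter 29]{Horm2} without further detail. In fact you supply more than the paper does, spelling out the principal symbol and the computation of the leading coefficient, but the route is the same.
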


\medskip\noindent Consequently, since two isospectral manifolds have the same spectral counting function :

\begin{coro}\label{Coro II.2}
Given any smooth compact Riemannian manifold of dimension $n\geqslant 2$ with boundary, the volume of its boundary is a spectral invariant for the Steklov spectrum.    
\end{coro}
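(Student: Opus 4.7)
The plan is very short because \textbf{Corollary \ref{Coro II.2}} is a direct consequence of \textbf{Proposition \ref{Prop II.1}}. I would proceed as follows.

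First, I would fix two smooth compact Riemannian manifolds $(M_1,g_1)$ and $(M_2,g_2)$ (with respective dimensions $n_1,n_2\geqslant 2$) satisfying $Sp(\Lambda_{g_1})=Sp(\Lambda_{g_2})$. By definition of the Steklov spectrum as a multiset, the two spectral counting functions
$$\mathcal{N}_{i}(\sigma):=\#\{k\geqslant 0 : \sigma_k(\Lambda_{g_i})\leq \sigma\},\qquad i=1,2,$$
coincide for every $\sigma\geq 0$. Hence $\mathcal{N}_1$ and $\mathcal{N}_2$ share the same asymptotic behavior as $\sigma\to\infty$.

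Next, I would apply Weyl's law (\textbf{Proposition \ref{Prop II.1}}) to each side:
$$\mathcal{N}_{i}(\sigma)=\frac{\mathrm{Vol}(\mathbb{B}^{n_i-1})}{(2\pi)^{n_i-1}}\,\mathrm{Vol}_{g_i^{\circ}}(\partial M_i)\,\sigma^{n_i-1}+\mathcal{O}(\sigma^{n_i-2}).$$
From the polynomial growth rate of $\mathcal{N}_1=\mathcal{N}_2$, I would first read off the common exponent, which forces $n_1-1=n_2-1$; in particular the dimension of the boundary is a spectral invariant. Once this is established, the leading coefficient is itself a spectral invariant, and since the factor $\mathrm{Vol}(\mathbb{B}^{n-1})/(2\pi)^{n-1}$ depends only on the (now determined) dimension $n$, one extracts
$$\mathrm{Vol}_{g_1^{\circ}}(\partial M_1)=\mathrm{Vol}_{g_2^{\circ}}(\partial M_2),$$
yielding the claim.

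There is really no obstacle here: the whole content is packed into \textbf{Proposition \ref{Prop II.1}}, which in turn rests on the standard Weyl asymptotics for positive elliptic self-adjoint classical pseudodifferential operators on a closed manifold (\cite[Chapter 29]{Horm2}) applied to $\Lambda_g\in\Psi^{1}(\partial M)$. The only mild subtlety worth flagging is the separation of the two pieces of information carried by the leading term (exponent versus coefficient); doing so in this order avoids any circularity in claiming that the volume is determined before knowing that the dimension is.
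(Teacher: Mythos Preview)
Your proposal is correct and follows exactly the paper's approach: the paper simply notes that two isospectral manifolds share the same spectral counting function and then reads off the boundary volume from the leading term of Weyl's law (\textbf{Proposition \ref{Prop II.1}}). Your extra care in first extracting the dimension from the growth exponent before identifying the leading coefficient is a harmless refinement of the same argument.
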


We now introduce the invariants on which this paper is based,  provided by the fundamental wave trace formula of Duistermaat-Guillemin \cite[Theorem 4.5]{GD}. The latter, applied to the DN map, is related to the trace of the wave group 
$$U(t):=e^{-it\Lambda_{g}}\hspace{0.1cm}.$$
It shows in particular that the singularities of 
$$\mathrm{Tr}\left(U(t)\right):=\sum_{k=0}^{+\infty}e^{-i\sigma_{k}t}$$
as a distribution in time, lie at the periods of the periodic geodesics of the boundary, in other words :
$$Suppsing\hspace{0.05cm}\mathrm{Tr}\left(U(t)\right)\subseteq \mathcal{T}\hspace{0.1cm},$$

\medskip\noindent where $Suppsing$ denotes the singular support of a distribution (the complement of the set where it is smooth) and $\mathcal{T}$ is the set of periods. Any element of the set $\lvert\mathcal{T}\rvert:=\lbrace \lvert T\rvert\hspace{0.1cm};\hspace{0.1cm}T\in\mathcal{T}\rbrace$ will, of course, be referred to as the length of a periodic orbit and $\lvert\mathcal{T}\rvert$ will be called the \emph{length spectrum}.
\\In order to state this wave trace formula more precisely, we need to recall a few concepts : 

\begin{itemize}
\item Given a closed orbit $\gamma$ of period $T_{\gamma}$, its primitive period $T_{\gamma}^{\sharp}$ is the least non-zero period such that $T_{\gamma}$ is a multiple of $T_{\gamma}^{\sharp}$.

    \item The linearized Poincaré return map $\mathcal{P}_{\gamma}$ of the closed orbit $\gamma$ of period $T_{\gamma}$ starting from $w=(x,\xi)\in S^{*}N$, is the linear application induced by $d\phi_{T_{\gamma}}(w)$ in $T_{w}S^{*}N/\mathbb{R}X(w)$ where $X$ denotes, of course, the generator of the geodesic flow of $N$. A closed geodesic $\gamma$ is called \textit{non-degenerate} if 
$$\det\left(Id-\mathcal{P}_{\gamma}\right)\neq 0\hspace{0.1cm}.$$
In particular, it is easy to see that this property holds if the underlying manifold $N$ is Anosov, since in this case $\mathcal{P}_{\gamma}$ may be identified with $\restriction{d\phi_{T_{\gamma}}(w)}{E_{s}(w)\oplus E_{u}(w)}\hspace{0.1cm}.$
    \item The Morse index $m_{\gamma}$ of a closed geodesic $\gamma$ is the maximal dimension of a vector subspace on which the second-order differential of the energy functional at $\gamma$ is negative definite. We refer to \cite{Duis} for details.
\end{itemize}
For further information on these concepts, we refer more generally to the online book \cite{Guill-Mazz} currently in progress.

\begin{Theor}[Duistermaat-Guillemin, \cite{GD}]\label{Th II.3}

Suppose that $(\partial M,g^{\circ})$ only admits a finite number of closed geodesics $\gamma_{1},...,\gamma_{r}$ of period $T$ and that each of these geodesics is nondegenerate.
\\Then
   $$S(t):=\mathrm{Tr}\left(U(t)\right)$$ 
   \\is a tempered distribution on $\mathbb{R}$ admitting an isolated singularity in $t=T$ given by a Lagrangian distribution $e_{T}$ with 
   $$Suppsing\left(e_{T}\right)=\lbrace T\rbrace$$  
and complete asymptotic expansion $$e_{T}(t)\underset{t\rightarrow T}{\sim} \hspace{0.1cm} c_{T,-1}(t-T+i0)^{-1}+\sum_{j=0}^{+\infty}c_{T,j}(t-T+i0)^{j}\log(t-T+i0)\hspace{0.1cm}.$$ 
The leading coefficient is expressed as 
   
   $$c_{T,-1}=\sum_{j=1}^{r}\frac{\lvert T_{j}^{\sharp}\rvert e^{im_{j}\frac{\pi}{2}}e^{-iT\hspace{0.05cm}\overline{\gamma_{j}}}}{\lvert \det\left(Id-\mathcal{P}_{j}\right)\rvert^{1/2}}\hspace{0.1cm},$$ 
   
   \medskip\noindent where 
   $$\overline{\gamma_{j}}:=\frac{1}{T}\int_{\gamma_{j}}sub(\Lambda_{g})\hspace{0.1cm},$$ 
   and $\mathcal{P}_{j}$, $T_{j}^{\sharp}$, $m_{j}$ are respectively the linearized Poincaré map, the primitive period and the Morse index of $\gamma_{j}$.
\end{Theor}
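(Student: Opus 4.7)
The plan is to reduce the statement to the general Duistermaat--Guillemin trace formula \cite[Theorem 4.5]{GD} for a positive self-adjoint classical elliptic pseudodifferential operator of order one, and then identify the geometric quantities entering the leading singularity coefficient in the specific case of $\Lambda_g$. The first task will be to recall that $\Lambda_g\in\Psi^{1}(\partial M)$ is positive, self-adjoint and elliptic (cited from \cite{Tay}), with principal symbol $p(x,\xi)=\lvert\xi\rvert_{g^{\circ}(x)}$. Consequently the Hamiltonian flow of $p$ on the cosphere bundle $S^{*}\partial M$ coincides, up to the usual identification via $g^{\circ}$, with the cogeodesic flow; in particular its periodic orbits are in bijection with the closed geodesics of $(\partial M,g^{\circ})$ and share their periods, which is what produces the inclusion $\mathrm{Suppsing}\,S(t)\subset\mathcal{T}$.

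Once this identification is made, the core of the proof is the microlocal construction of the half-wave parametrix: one builds $U(t)$ modulo smoothing as a Fourier integral operator whose phase function solves the Hamilton--Jacobi equation $\partial_{t}\varphi+p(x,\partial_{x}\varphi)=0$, and whose amplitude is determined by a transport equation along the Hamiltonian flow. At leading order this amplitude is the standard half-density transported by $\phi_{t}$; at the next order the subprincipal symbol $\mathrm{sub}(\Lambda_{g})$ enters as a source term in the transport equation, so that the amplitude picks up a phase factor of the form $\exp\bigl(-i\int_{0}^{t}\mathrm{sub}(\Lambda_{g})\circ\phi_{s}\,ds\bigr)$ along each bicharacteristic. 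One then takes the trace, expressing $S(t)$ locally as an oscillatory integral whose stationary phase set is precisely the union of periodic orbits of period $t$.

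Under the assumption that for $t$ near $T$ the only periodic orbits of period $T$ are the non-degenerate $\gamma_{1},\dots,\gamma_{r}$, the stationary phase method (in the clean version of Duistermaat--H\"ormander) produces the claimed Lagrangian singularity $e_{T}(t)\sim c_{T,-1}(t-T+i0)^{-1}+\sum c_{T,j}(t-T+i0)^{j}\log(\cdot)$. The coefficient $c_{T,-1}$ is assembled orbit by orbit: the Hessian of the phase at $\gamma_{j}$, computed from the linearization of the flow transverse to the orbit, yields the factor $\lvert\det(\mathrm{Id}-\mathcal{P}_{j})\rvert^{-1/2}$; the Maslov index of the Lagrangian submanifold traced out by the orbit contributes the factor $e^{im_{j}\pi/2}$; the fact that each periodic orbit is swept out $T/T_{j}^{\sharp}$ times before closing up, combined with the parametrization by arclength, accounts for the factor $\lvert T_{j}^{\sharp}\rvert$; and the subprincipal transport discussed above precisely gives $e^{-iT\overline{\gamma_{j}}}$ with $\overline{\gamma_{j}}=T^{-1}\int_{\gamma_{j}}\mathrm{sub}(\Lambda_{g})$.

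The main obstacle, and the point that distinguishes the present situation from the standard statement for the Laplace--Beltrami operator on a closed manifold, is the precise tracking of the subprincipal contribution: for $\sqrt{-\Delta_{h}}$ one has $\mathrm{sub}=0$ and the exponential phase is absent, whereas here $\mathrm{sub}(\Lambda_{g})\not\equiv 0$ (a computation that is itself deferred to Appendix \ref{App B} and used in \textbf{Proposition \ref{Prop III.12}}) and gives the non-trivial factor $e^{-iT\overline{\gamma_{j}}}$. Making this phase explicit at the level of the FIO amplitude---and checking that it indeed integrates to the mean of $\mathrm{sub}(\Lambda_{g})$ along the closed orbit, independently of the starting point used to parametrize $\gamma_{j}$---is the delicate step; everything else is a direct transcription of the general theorem of Duistermaat--Guillemin to the setting where the base manifold is $\partial M$ and the operator is $\Lambda_{g}$.
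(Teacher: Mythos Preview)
The paper does not give its own proof of this statement: Theorem~\ref{Th II.3} is simply quoted from \cite[Theorem~4.5]{GD} and stated without argument, the only surrounding commentary being the remarks on the Morse index and on how the principal wave invariant is exploited afterwards. So there is nothing in the paper to compare your proposal against.

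That said, your sketch is a faithful outline of the Duistermaat--Guillemin proof as it applies here, and in fact it closely mirrors the proof sketch the paper \emph{does} give later for the related Theorem~\ref{Th IV.2}: reduction to the FIO calculus of \cite{GD}, identification of the Schwartz kernel of $U(t)$ as a Lagrangian distribution in $I^{-1/4}(\mathbb{R}\times\partial M\times\partial M,C)$, and computation of the leading coefficient via clean intersection theory on the diagonal. Your emphasis on the subprincipal transport producing the factor $e^{-iT\overline{\gamma_j}}$ is exactly the point the paper exploits, and your identification of $\sigma_{\Lambda_g}=|\xi|_{g^\circ}$ so that bicharacteristics are geodesics is the correct first step. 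Nothing in your outline is wrong; it is just that for this particular theorem the paper defers entirely to the literature rather than reproducing the argument.
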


\begin{Rem}\label{Rq II.9}
Note that the Morse index is zero for any non-degenerate closed orbit with no conjugate point, which is the case in particular for all closed geodesics of an Anosov manifold. We refer again to \cite{Duis} for details and, for a proof of this result, the reader can consult the aforementioned \cite{Guill-Mazz}. We also refer to \cite{Kli} and more generally to \cite{PatPat}.
\end{Rem}

\medskip\noindent The leading term $c_{T,-1}$ in the previous theorem is called the principal wave trace invariant at $t=T$, and is the term controlling the singularity. In particular, if this term is cancelled, then the corresponding period $T$ is not a singularity. \\Consequently, to take full advantage of this theorem, we systematically consider the general case where $(\partial M,g^{\circ})$ is Anosov with the generic assumption that the length spectrum is simple (i.e. $r=1$ in the theorem).
\\It is then possible to formulate a result analogous to \cite[Theorem 3]{GK} in our context, highlighting two spectral invariants :

\begin{coro}\label{Coro II.8}
In the class of Riemannian metrics $g$ such that $g^{\circ}$ is Anosov with simple length spectrum, the Steklov spectrum determines :
\begin{itemize}
    \item[(a)] the lengths of the closed geodesics on the boundary.
    \item[(b)] for any period $T\in\mathcal{T}$, the quantity 

    $$c_{T,-1}=\frac{\lvert T^{\sharp}\rvert exp\left(-i\hspace{0.05cm}\int_{\gamma_{g^{\circ}}}sub(\Lambda_{g})\right)}{\lvert \det\left(Id-\mathcal{P}_{\gamma_{g^{\circ}}}\right)\rvert^{1/2}}\hspace{0.1cm}\cdot$$
\end{itemize}
\end{coro}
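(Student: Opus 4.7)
The plan is to read off both invariants directly from the Duistermaat--Guillemin wave trace formula (\textbf{Theorem \ref{Th II.3}}), using that $S(t)=Tr(U(t))=\sum_{k\geq 0}e^{-i\sigma_{k}t}$ depends only on the multiset $Sp(\Lambda_g)$. Consequently any distributional invariant of $S(t)$, in particular its singular support and the coefficients of its singularity expansion at each singular time, is determined by the Steklov spectrum alone.

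For part (a), I would first observe that $\mathrm{Suppsing}\,S(t)\subseteq \mathcal{T}$ by the general clean intersection result recalled before \textbf{Theorem \ref{Th II.3}}. The nontrivial direction is to show that every $T\in\mathcal{T}$ really is a singular time, i.e.\ that it is not wiped out by cancellation. Here the Anosov plus simple length spectrum hypothesis is crucial: for each $T\in\mathcal{T}$ the theorem applies with $r=1$, so the leading coefficient reduces to the single term
\[
c_{T,-1}=\frac{\lvert T^{\sharp}\rvert\, e^{i m_{\gamma}\pi/2}\,e^{-iT\,\overline{\gamma}}}{\lvert \det(Id-\mathcal{P}_{\gamma})\rvert^{1/2}}.
\]
Non-degeneracy of $\gamma$ (automatic in the Anosov case via the hyperbolic splitting, as recalled in the bullet defining $\mathcal{P}_{\gamma}$) makes the denominator finite and nonzero, and the numerator has modulus $\lvert T^{\sharp}\rvert>0$, so $c_{T,-1}\neq 0$. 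Hence $\mathrm{Suppsing}\,S(t)=\mathcal{T}$, and the set of periods---that is, the lengths of the closed geodesics of $(\partial M,g^{\circ})$---is read off from $S(t)$ and is therefore a Steklov spectral invariant.

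For part (b), at each isolated singular time $T$ the coefficient $c_{T,-1}$ is the residue-type leading coefficient of the Lagrangian distribution $e_{T}$ in the expansion stated in \textbf{Theorem \ref{Th II.3}}. Because $e_T$ is determined by the germ of $S(t)$ near $T$, the scalar $c_{T,-1}$ is itself a spectral invariant. Specializing the formula to $r=1$ gives exactly the expression in the statement, and by \textbf{Remark \ref{Rq II.9}} the Morse index $m_{\gamma}$ vanishes in the Anosov setting so the phase factor $e^{i m_{\gamma}\pi/2}$ drops out. This yields the stated form of $c_{T,-1}$ in terms of $T^{\sharp}$, $\mathcal{P}_{\gamma_{g^{\circ}}}$ and the integral of $sub(\Lambda_g)$ along $\gamma_{g^{\circ}}$.

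The only step that requires genuine care is the non-cancellation argument in part (a): one must verify that under the simple length spectrum hypothesis the single-orbit contribution cannot vanish, which is immediate from the explicit formula but would fail if several geodesics shared the same length and their phases conspired. Once this is in hand the corollary is a direct extraction from \textbf{Theorem \ref{Th II.3}} and \textbf{Remark \ref{Rq II.9}}.
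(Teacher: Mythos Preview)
Your argument is correct and is precisely the reasoning the paper intends: the corollary is not given a separate proof but is extracted from \textbf{Theorem \ref{Th II.3}} together with the observation (spelled out in the surrounding discussion and in \textbf{Remark \ref{Rq II.9}}) that under the Anosov plus simple length spectrum hypothesis one has $r=1$, $m_{\gamma}=0$, and the single-term leading coefficient is nonzero, so that $\mathrm{Suppsing}\,S(t)=\mathcal{T}$ and $c_{T,-1}$ can be read off from the germ of $S$ at each $T$.
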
 

\begin{Rem}
As already mentioned, in the standard case of the Laplacian on a closed manifold, the subprincipal symbol is zero and consequently the Anosov property (or the negative curvature in the case of \cite{GK}) is sufficient to ensure, in this context, the point (a) of the previous corollary. Indeed in this case, given the expression of the principal wave trace invariant, no cancellation takes place in the wave trace formula i.e. every period contributes to a singularity. \\In our case however, the subprincipal symbol is non-zero, which forces us to ensure that there are no such cancellations by adding the simple length spectrum assumption. On the other hand, it also provides additional information, namely point (b).
\end{Rem}

To conclude this section, note that it is possible to show in a similar way to the spectral problem on the Laplacian (see \cite{GK} or \cite{CrSh}), that infinitesimal spectral rigidity is actually related to infinitesimal rigidity of the marked length spectrum, this is the object of the following.

\medskip\noindent If $(g_{s})_{0\leqslant s\leqslant\varepsilon}$ is any deformation of $g$ (see \textbf{Definition \ref{Def I.1}}) s.t $g_{s}^{\circ}$ is an Anosov metric for all $s$, any closed geodesic $\gamma$ for $g^{\circ}$ deforms smoothly as a closed geodesic $\gamma_{s}$ for $g_{s}^{\circ}$ and its length is a smooth function in $s$ (cf. \cite{dL}, or \cite[Lemma 15.3]{Guill-Mazz} for a precise statement in this specific context). If, in addition, $(g_{s})_{0\leqslant s\leqslant\varepsilon}$ is Steklov isospectral and $g^{\circ}$ has simple length spectrum, one can show that the marked length of a closed geodesic $\gamma$ is locally preserved along the deformation :

\begin{lem}\label{Lemma II.10}
For any free homotopy class $\left[\gamma\right]\in\mathcal{C}$, there exists $\eta_{\left[\gamma\right]}\in\left]0,\varepsilon\right]$ such that
$$\forall s\leq\eta_{\left[\gamma\right]}\hspace{0.1cm},\hspace{0.1cm}\mathcal{L}_{g_{s}^{\circ}}(\left[\gamma\right])=\mathcal{L}_{g^{\circ}}(\left[\gamma\right])\hspace{0.1cm}.$$    
\end{lem}

\medskip\noindent The key point in the proof of this result, is to see that the length of any closed geodesic remains uniformly isolated under small perturbations of the metric. This is a consequence of the work of Guillarmou and Lefeuvre \cite[Proposition 2.1]{GuiLef} :

\begin{lem}\label{Lemma II.11}
For any free homotopy class $\left[\gamma\right]\in\mathcal{C}$, there exists $s_{0}\in\left]0,\varepsilon\right[$ and $\delta>0$ such that for all $s<s_{0}$, 
$$I_{s}\cap\lvert\mathcal{T}_{g_{s}^{\circ}}\rvert=\lbrace\mathcal{L}_{g_{s}^{\circ}}(\left[\gamma\right])\rbrace\hspace{0.1cm},$$ 
where $$I_{s}:=\left]\mathcal{L}_{g_{s}^{\circ}}(\left[\gamma\right])-\delta\hspace{0.1cm},\hspace{0.1cm}\mathcal{L}_{g_{s}^{\circ}}(\left[\gamma\right])+\delta\right[$$ 

\medskip\noindent and $\lvert\mathcal{T}_{g_{s}^{\circ}}\rvert$ denotes the set of lengths of closed geodesics for $g_{s}^{\circ}$.    
\end{lem}
\begin{proof}
Let $\left[\gamma\right]\in\mathcal{C}$. Since $g^{\circ}$ is Anosov, its length spectrum is discrete and consequently, there exists $\eta>0$ s.t \begin{equation}
\left]\mathcal{L}_{g^{\circ}}(\left[\gamma\right])-\eta\hspace{0.1cm},\hspace{0.1cm}\mathcal{L}_{g^{\circ}}(\left[\gamma\right])+\eta\right[\cap\lvert\mathcal{T}_{g^{\circ}}\rvert=\lbrace\mathcal{L}_{g^{\circ}}(\left[\gamma\right])\rbrace\hspace{0.1cm}.\label{eqisol}    
\end{equation}
By \cite[Proposition 2.1]{GuiLef}, the $g^{\circ}$-\emph{normalized marked length spectrum} converges in $\ell^{\infty}\left(\mathcal{C}\right)$ as $s$ tends to $0$
$$\left\lVert\left(\frac{\mathcal{L}_{g_{s}^{\circ}}(\left[\omega\right])}{\mathcal{L}_{g^{\circ}}(\left[\omega\right])}\right)_{\left[\omega\right]\in\mathcal{C}}-\mathbf{1}\right\rVert_{\ell^{\infty}\left(\mathcal{C}\right)}\underset{s\rightarrow 0}{\longrightarrow}\quad 0\hspace{0.1cm}.$$
Consequently, for all $\epsilon>0$ and $s$ small enough, \begin{equation}
(1-\epsilon)\mathcal{L}_{g^{\circ}}\leq\mathcal{L}_{g_{s}^{\circ}}\leq (1+\epsilon)\mathcal{L}_{g^{\circ}}\hspace{0.1cm}.\label{eqnorm}  \end{equation}
Taking $\epsilon>0$ sufficiently small, shrinking $\eta$ if necessary, and using the continuity of $s\mapsto\mathcal{L}_{g_{s}^{\circ}}(\left[\gamma\right])$ at $0$, we can ensure that there exists $s_{0}>0$ such that for any $s<s_{0}$, $$\left]\frac{\mathcal{L}_{g_{s}^{\circ}}(\left[\gamma\right])-\delta}{1+\epsilon}\hspace{0.1cm},\hspace{0.1cm}\frac{\mathcal{L}_{g_{s}^{\circ}}(\left[\gamma\right])+\delta}{1-\epsilon}\right[\subset\left]\mathcal{L}_{g^{\circ}}(\left[\gamma\right])-\eta\hspace{0.1cm},\hspace{0.1cm}\mathcal{L}_{g^{\circ}}(\left[\gamma\right])+\eta\right[\hspace{0.1cm},$$ 
with $\delta:=\eta/2$.
Assuming then that there exists $\left[\omega\right]\neq\left[\gamma\right]$ such that $$\mathcal{L}_{g_{s}^{\circ}}(\left[\omega\right])\in\left]\mathcal{L}_{g_{s}^{\circ}}(\left[\gamma\right])-\delta\hspace{0.1cm},\hspace{0.1cm}\mathcal{L}_{g_{s}^{\circ}}(\left[\gamma\right])+\delta\right[\hspace{0.1cm},$$ \eqref{eqnorm} would lead to $$\mathcal{L}_{g^{\circ}}(\left[\omega\right])\in\left]\mathcal{L}_{g_{s}^{\circ}}(\left[\gamma\right])-\eta\hspace{0.1cm},\hspace{0.1cm}\mathcal{L}_{g_{s}^{\circ}}(\left[\gamma\right])+\eta\right[\hspace{0.1cm},$$ 

\medskip\noindent which would contradict \eqref{eqisol}. 

\end{proof}

\medskip\noindent Let us point out that the proof of \textbf{Lemma \ref{Lemma II.11}} does not use the simple length spectrum assumption on $g^{\circ}$ at all.

\begin{proof}[Proof of \textbf{Lemma \ref{Lemma II.10}}]
Let $\left[\gamma\right]\in\mathcal{C}$ be any free homotopy class. Using \textbf{Lemma \ref{Lemma II.11}} and its notations, there exists $\delta>0$ s.t for any $s\in \left[0,\varepsilon\right]$,
$$I_{s}\cap\lvert\mathcal{T}_{g_{s}^{\circ}}\rvert=\lbrace\mathcal{L}_{g_{s}^{\circ}}(\left[\gamma\right])\rbrace\hspace{0.1cm}.$$ 
In addition, denoting also by $\lvert\mathcal{T}_{g^{\circ}}\rvert$ the length spectrum of $g^{\circ}$, we have \begin{equation}
\lvert\mathcal{T}_{g^{\circ}}\rvert\subseteq\lvert\mathcal{T}_{g_{s}^{\circ}}\rvert\hspace{0.1cm}.\label{eqSpec}    
\end{equation}
This fact is a direct consequence of isospectrality combined with the wave trace formula for the DN map (cf. \textbf{Theorem \ref{Th II.3}} above) which gives the inclusion of the singular supports of the wave traces associated with $\Lambda_{g_{s}}$ and $\Lambda_{g}$. Note that \eqref{eqSpec} is a priori just an inclusion because the length spectrum of $g_{s}^{\circ}$ is not necessarily simple.

\medskip\noindent To conclude, we use that the map $s\mapsto\mathcal{L}_{g_{s}^{\circ}}(\left[\gamma\right])$ is continuous (even smooth as mentioned above). In particular, there is 
$\eta_{\left[\gamma\right]}\in\left]0,\varepsilon\right]$ s.t for any $s\leq\eta_{\left[\gamma\right]}\hspace{0.1cm},$ 

$$\lvert\mathcal{L}_{g_{s}^{\circ}}(\left[\gamma\right])-\mathcal{L}_{g^{\circ}}(\left[\gamma\right])\rvert\leq\delta$$

\medskip\noindent and since $\mathcal{L}_{g^{\circ}}(\left[\gamma\right])\in\lvert\mathcal{T}_{g_{s}^{\circ}}\rvert$ (by \eqref{eqSpec}), we get that for any $s\leq\eta_{\left[\gamma\right]}\hspace{0.1cm},$ 

$$\mathcal{L}_{g_{s}^{\circ}}(\left[\gamma\right])=\mathcal{L}_{g^{\circ}}(\left[\gamma\right])\hspace{0.1cm}.$$
\end{proof}
\medskip\noindent Note that this result is less direct and much weaker than in the case of Laplace isospectral deformations, since in the latter, inclusion (\ref{eqSpec}) is an equality for any $s$ and the length spectrum does not need to be simple. \\In relation to this last point, we would like to point out to the reader that \textbf{Lemma \ref{Lemma II.10}} does not imply that the simple length spectrum assumption is preserved along the deformation even for $s$ sufficiently small. For this to be the case, $\eta$ would have to be uniform (independent of any free homotopy class $\left[\gamma\right]$) which is absolutely not true in general even in the class of Steklov isospectral metrics. This is a significant obstacle to linearizing the problem, but we can nevertheless consider a weaker version, which is the subject of the following discussion. 

\medskip\noindent Differentiating for each free homotopy class the equality of marked length spectra (given by \textbf{Lemma \ref{Lemma II.10}}) with respect to the parameter, we obtain that the first variation 
$$\dot{g}^{\circ}=\restriction{\frac{\partial}{\partial s}}{s=0}g_{s}^{\circ}$$
has to be in the kernel of the X-ray transform $\mathcal{I}_{2}^{g^{\circ}}$ :

\begin{Prop}\label{Prop II.7}
For any free homotopy class $\left[\gamma\right]\in\mathcal{C}$,
$$\mathcal{I}_{2}^{g^{\circ}}\dot{g}^{\circ}(\left[\gamma\right])=0\hspace{0.1cm}.$$     
\end{Prop}  

\medskip\noindent A clean proof of this interesting relation between the kernel of the X-ray transform and the marked length spectrum can be found in \cite[Lemma 15.4]{Guill-Mazz}. It will serve as the initialization step for establishing \textbf{Proposition \ref{Prop III.1}} by induction on the order of derivation of the metric tensor. Indeed, denoting for all $k\in\mathbb{N}_{\geq 2}$, $$\left(g^{\circ}\right)^{(k)}=\restriction{\frac{\partial^{k}}{\partial s^{k}}}{s=0}g_{s}^{\circ}$$

\medskip\noindent the $k$-th variation of the metric tensor and differentiating $k$ times the equality of \textbf{Lemma \ref{Lemma II.10}} with respect to $s$, we observe more generally that for all $\left[\gamma\right]\in\mathcal{C}$, 
\begin{equation}\mathcal{I}_{2}^{g^{\circ}}\left(g^{\circ}\right)^{(k)}(\left[\gamma\right])\hspace{0.1cm}+\hspace{0.1cm}\mathcal{R}_{k}(\left[\gamma\right])\left[\dot{g}^{\circ}, \left(g^{\circ}\right)^{(2)},...,\left(g^{\circ}\right)^{(k-1)}\right]=0\hspace{0.1cm},\label{eqmarked}\end{equation}
where the second term is a finite sum of integrals over the closed $g^{\circ}$-geodesic $\gamma$ of (tensorial) polynomials in the lower-order variations $\dot{g}^{\circ}$, $\left(g^{\circ}\right)^{(2)}$,$...$, $\left(g^{\circ}\right)^{(k-1)}$ and their covariant derivatives of order at most $k-2$ along $\gamma$. The left-hand side of \eqref{eqmarked} follows from a technical but standard calculation of the $k$-th derivative of $s\mapsto\mathcal{L}_{g_{s}^{\circ}}(\left[\gamma\right])$ with respect to $s$, which, on the other hand, is zero at $s=0$ according to \textbf{Lemma \ref{Lemma II.10}}.

\medskip\noindent In particular, considering a deformation of conformal metrics $(g_{s}):=(c_{s}g)$, any variation $\left(g^{\circ}\right)^{(m)}$ is reduced to $\left(\restriction{\partial_{s}^{m}c_{s}}{s=0}\right)g^{\circ}$,  so if each of these derivatives is zero at the boundary up to order $k$, we have :

\begin{equation}\forall\left[\gamma\right]\in\mathcal{C}\hspace{0.1cm},\hspace{0.1cm}\mathcal{I}_{0}^{g^{\circ}}\restriction{\partial_{s}^{k}c_{s}}{s=0}(\left[\gamma\right])=0\hspace{0.1cm}.\label{eqind}\end{equation}
Finally, it should be noted that in all this, it is not essential to assume that the deformation preserves the Anosov property at the boundary, due to the structural stability of an Anosov flow and the fact that we differentiate at $s=0$. 

\section{Recovery of the normal derivative at the boundary}\label{Section III}

In this section we show that the information provided at subprincipal order concerns the normal derivative of the conformal factor at the boundary, and we explain how it is possible to recover it from the Steklov spectrum. More precisely, we show here that if $g$ and $cg$ are isospectral, coincide at the boundary, and $g^{\circ}$ is Anosov with simple length spectrum, then 
$$\restriction{\partial_{\nu}c}{\partial M}\equiv 0\hspace{0.1cm}.$$
As mentioned in the Introduction, this spectral inverse problem is a priori non-linear and we are actually unable to linearize it using the recursive procedure presented in Section \ref{Section IV}, which allows us to deal with the lower-order levels. \\In this section, we first formulate a result in the context of isospectral deformations (linear problem) as introduced in \textbf{Definition \ref{Def I.1}}, the deformation parameter $s$ allowing the problem to be linearized by differentiation (see \textbf{Proposition \ref{Th III.13}}).
\\We then prove the result for the general problem, bypassing the absence of a differentiation parameter through the non-Abelian Liv\v sic theory (we refer the reader to \textbf{Proposition \ref{Prop III.14}} below).

\medskip\noindent From now on, we therefore assume that $\restriction{c}{\partial M}\equiv 1$ and begin by showing that the exploitable quantity that appears when calculating the subprincipal symbol of the DN map $\Lambda_{cg}\in\Psi^{1}(\partial M)$ is, as mentioned above, the normal derivative of the conformal factor at the boundary :

\begin{Lem}\label{Prop III.12}
Let $c:M\rightarrow \mathbb{R}^{+}_{*}$ be a smooth function satisfying $\restriction{c}{\partial M} \equiv 1$. 

\medskip\noindent Then we have the following equality on $T^{*}\partial M$ 
    
$$sub(\Lambda_{cg})-sub(\Lambda_{g})=\alpha_{n}\restriction{\left(\partial_{\nu}c\right)}{\partial M}\hspace{0.1cm},$$
where $\alpha_{n}=-\dfrac{n-2}{4}\cdot$
\end{Lem}
\begin{proof}
If we note in any local coordinates 
$$\sigma^{full}_{\Lambda_{g}}\sim \sum_{j=-1}^{+\infty}p_{-j}\hspace{0.2cm},$$
in the sense of \eqref{eq1}, then by the result of \textbf{Corollary \ref{coro 1.3.1}} and the expression of the subprincipal symbol in coordinates (see the beginning of Section \ref{Section IV}), we obtain the following identity : $$\begin{aligned}[t]sub(\Lambda_{cg}) &=p_{0}+\alpha_{n}\restriction{\left(\partial_{\nu}c\right)}{\partial M}-\frac{1}{2i}\sum_{j=1}^{n}\frac{\partial^{2}\sigma_{\Lambda_{g}}}{\partial x_{j}\partial\xi_{j}} \\
&= sub(\Lambda_{g})+\alpha_{n}\restriction{\left(\partial_{\nu}c\right)}{\partial M}\\
\end{aligned}$$   

\medskip\noindent in any coordinate chart $(x,\xi)$ on $T^{*}\partial M$.
\end{proof}

\medskip\noindent Combining this with \cite[Theorem $4.5$]{GD} (see \textbf{Theorem \ref{Th II.3}}), we are able to prove the second part of the infinitesimal spectral rigidity result $\left(\ref{eq}\right)$ : 

\begin{prop}\label{Th III.13}
Let $(M,g)$ be a smooth compact Riemannian manifold with boundary such that $(\partial M,g^{\circ})$ is Anosov with simple length spectrum.\\Let $(g_{s})_{0\leqslant s\leqslant \varepsilon}$ be a Steklov isospectral deformation of the metric $g$ s.t there exists a family of smooth functions $\left( c_{s}:M\mapsto \mathbb{R^{+}_{*}}\right)_{0\leqslant s\leqslant \varepsilon} $ satisfying for any $s\in\left[ 0,\varepsilon\right]$, 

$$g_{s}=c_{s}g$$
and $$\restriction{c_{s}}{\partial M} \equiv 1\hspace{0.1cm}.$$
Then for all $s\in\left[ 0,\varepsilon\right]$,
$$\restriction {\partial_{\nu}c_{s}}{\partial M}\equiv 0\hspace{0.1cm}.$$
\end{prop}
\begin{proof}
For all $s\in\left[ 0,\varepsilon\right]$, $g_{s}^{\circ}=g^{\circ}$, so the closed geodesics for $g_{s}^{\circ}$ are exactly the closed geodesics for $g^{\circ}$ and since $g_{s}^{\circ}$ and $g^{\circ}$ are Anosov with simple length spectrum, then by isospectrality, the principal wave trace invariant given by \textbf{Theorem \ref{Th II.3}} remains constant : 
$$\frac{\lvert T_{\gamma}^{\sharp}\rvert e^{-i\int_{\gamma}sub(\Lambda_{g_{s}})}}{\lvert \det\left(Id-\mathcal{P}_{\gamma}\right)\rvert^{1/2}}=\frac{\lvert T_{\gamma}^{\sharp}\rvert e^{-i\int_{\gamma}sub(\Lambda_{g})}}{\lvert \det\left(Id-\mathcal{P}_{\gamma}\right)\rvert^{1/2}}\hspace{0.1cm},$$

\medskip\noindent \\for any $\gamma:=\gamma_{0}$ closed orbit of the (co)geodesic flow for $g^{\circ}$ with period $T_{\gamma}$ (recall that $T_{\gamma}^{\sharp}$ is its primitive period).
\\Thus for all $s\in\left[ 0,\varepsilon\right]$ and any $\gamma$ closed orbit,
$$exp\left(-i\int_{\gamma}\left[sub(\Lambda_{g_{s}})-sub(\Lambda_{g})\right]\right)=1$$ 
and then by \textbf{Lemma \ref{Prop III.12}} we obtain $$exp\left(-i\alpha_{n}\int_{\gamma}\restriction{\left(\partial_{\nu}c_{s}\right)}{\partial M}\right)=1\hspace{0.1cm}.$$
Differentiating with respect to $s$, we finally get for any $\left[\gamma\right]\in\mathcal{C}$,
$$\mathcal{I}_{0}^{g^{\circ}}\partial_{\nu}\dot{c}_{s}\left(\left[\gamma\right]\right)=\int_{\gamma}\restriction{\left(\partial_{\nu}\dot{c}_{s}\right)}{\partial M}=0\hspace{0.1cm}.$$
\medskip\noindent From there we can conclude that for all $s\in \left[0,\varepsilon\right]$, 
$$\restriction{\left(\partial_{\nu}\dot{c}_{s}\right)}{\partial M}=0$$ 

\medskip\noindent in the same way as in \textbf{Proposition \ref{Prop III.1}} by the injectivity of the X-ray transform on functions for Anosov manifolds (\textbf{Theorem \ref{Th II.5}}). Hence for all $s\in \left[0,\varepsilon\right]$, $\partial_{\nu}c_{s}=\partial_{\nu}c_{0}=0$ on $\partial M$.
\end{proof}

\medskip\noindent We now consider the non-linear problem. In this case, the information given by the principal wave invariant on the X-ray transform is more delicate to exploit.
\\First, to make this information explicit, we recall a result analogous to the standard Liv\v sic theorem \textbf{Theorem \ref{Th II.1}} :

\begin{prop}\label{Prop III.14}
Let $\mathcal{M}$ be a closed Riemannian manifold, $f: \mathcal{M}\rightarrow \mathbb{R}$ be a smooth function, and $\left(\phi_{s}\right)$ a smooth transitive Anosov flow on $\mathcal{M}$ generated by the vector field $X$.
\vspace{0.2cm}
\\The following propositions are equivalent :
\vspace{0.2cm}
\begin{enumerate}
\item For every periodic orbit $\gamma$ of the flow, 

$$\int_{\gamma}f\in 2\pi\mathbb{Z}\hspace{0.1cm}.$$
\item There is $u\in C^{\infty}(\mathcal{M},\mathbb{S}^{1})$ such that $$f=-i\hspace{0.05cm}\frac{Xu}{u}\hspace{0.1cm}\cdot$$
\end{enumerate}
\end{prop}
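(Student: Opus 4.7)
The strategy is to recast the statement as a rigidity question for smooth $\mathbb{S}^{1}$-valued cocycles over the flow and then invoke the non-Abelian Liv\v sic theorem cited in the preamble.

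For the easy implication $(2)\Rightarrow (1)$, I would first observe that differentiating $u\bar{u}\equiv 1$ along $X$ yields $Xu\cdot \bar{u}+u\cdot X\bar{u}=0$, so $Xu/u$ is purely imaginary and $f:=-iXu/u$ is a genuine real-valued smooth function on $\mathcal{M}$. Along a closed orbit $\gamma$ of period $T_{\gamma}$ through a point $x$, the map $s\mapsto u(\phi_{s}x)$ is a smooth loop $\mathbb{R}/T_{\gamma}\mathbb{Z}\to \mathbb{S}^{1}$; writing a lift as $e^{i\tilde\theta(s)}$, its topological degree $k\in\mathbb{Z}$ satisfies
$$2\pi k \;=\; \tilde\theta(T_{\gamma})-\tilde\theta(0)\;=\; \int_{0}^{T_{\gamma}}\bigl(-iXu/u\bigr)(\phi_{s}x)\,ds \;=\;\int_{\gamma}f,$$
which is exactly (1).

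For the non-trivial implication $(1)\Rightarrow (2)$, I would introduce the smooth $\mathbb{S}^{1}$-valued cocycle
$$c(t,x)\;:=\;\exp\!\Bigl(i\int_{0}^{t}f(\phi_{s}x)\,ds\Bigr),$$
verify the cocycle identity $c(t+s,x)=c(t,\phi_{s}x)\,c(s,x)$ by splitting the integration interval, and note that hypothesis (1) says precisely that $c(T_{\gamma},x)=1$ on every periodic orbit of $(\phi_{s})$. I would then apply the smooth Liv\v sic theorem for $\mathbb{S}^{1}$-valued cocycles over a transitive Anosov flow (\cite{Livs}, \cite{NicTor}, \cite[Theorem 2.2]{Pat}) to produce $u\in C^{\infty}(\mathcal{M},\mathbb{S}^{1})$ with $c(t,x)=u(\phi_{t}x)\,\overline{u(x)}$. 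Differentiating in $t$ at $t=0$ and using $|u|=1$ yields $if(x)=(Xu)(x)\,\overline{u(x)}=Xu(x)/u(x)$, i.e.\ $f=-iXu/u$.

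The main obstacle is precisely the cocycle rigidity step. One cannot simply reduce to \textbf{Theorem \ref{Th II.1}} by taking a global logarithm of $c$: the phase $\int_{0}^{t}f\circ\phi_{s}\,ds$ need not close up on periodic orbits as a real number, only modulo $2\pi$, and this monodromy is what the factor $2\pi\mathbb{Z}$ in (1) measures. Producing a continuous $u$ follows the classical Liv\v sic scheme on a dense orbit combined with a shadowing argument exploiting the trivialization on periodic orbits; the delicate $C^{\infty}$-regularity upgrade rests on a De La Llave--Marco--Moriyon-type bootstrap (\cite{dL}), which is applicable because $\mathbb{S}^{1}$ admits local lifts to $\mathbb{R}$ on which the real-valued smooth Liv\v sic regularity theory applies directly.
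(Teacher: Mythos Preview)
Your proposal is correct and follows essentially the same route as the paper: define the $\mathbb{S}^{1}$-valued cocycle $c(t,x)=\exp\bigl(i\int_{0}^{t}f(\phi_{s}x)\,ds\bigr)$, observe that hypothesis~(1) means it is trivial on periodic orbits, apply the smooth Liv\v sic theorem for cocycles \cite[Theorem~2.2]{Pat} to obtain $u$, and differentiate at $t=0$. Your treatment of $(2)\Rightarrow(1)$ via the degree of the loop and your discussion of why one cannot reduce to the real-valued Liv\v sic theorem are more explicit than the paper's, but the arguments are otherwise identical.
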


\medskip\noindent This statement follows directly from the use of Liv\v sic theorem for cocycles, we refer the reader to \cite[Theorem 2.2]{Pat} and references therein. Of course, it is possible to prove this result in a more standard way by following the same initial proof as that of Liv\v sic theorem (\textbf{Theorem \ref{Th II.1}}), i.e. by first showing the statement in the H\"older $C^{\alpha}$ regularity and then extending it to smooth regularity. The latter can also be shown using the more recent microlocal approach of \cite{Guillarmou}.
\vspace{0.2cm}
\\Finally, note that our problem is similar to what appears when considering two transparent connections (see \cite[Section 3]{Pat}) on a Hermitian line bundle over an Anosov manifold as in the proof of \cite[Theorem 3.2]{Pat}. The latter shows that two transparent connections are gauge-equivalent. It is based on the use of a standard algebraic tool called Gysin sequences (see \cite[Chapter 14, p.177-179]{BottTu}), reducing the proof to the s-injectivity of $\mathcal{I}_{1}$ on Anosov manifolds (see \cite[Theorem 1.3]{CrSh}). \\The following theorem can be proved using essentially the same type of arguments.

\begin{Theo}\label{Th III.17}
    Let $(N,h)$ be a closed Anosov manifold, $X$ the generator of its geodesic flow and $f$ a smooth real-valued function on $N$.\\If $u\in C^{\infty}(SN)$ is a non-trivial solution to the following transport equation with potential $f$ 
    $$Xu+ifu=0\hspace{0.1cm},$$
then
    $$f\equiv 0\hspace{0.1cm}.$$
\end{Theo}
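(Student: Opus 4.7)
The plan is to exploit the fact that under the equation $Xu+ifu=0$ the modulus $|u|$ is automatically constant, so that $u$ produces a real closed $1$-form on $SN$ whose pairing with the geodesic generator recovers $f$. The proof then reduces, via a cohomological decomposition of this form together with a time-reversal symmetry, to the injectivity of the X-ray transform on functions established in \textbf{Theorem \ref{Th II.5}}.

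First I would compute
$$X|u|^{2}=(Xu)\bar u+u\overline{Xu}=-if|u|^{2}+if|u|^{2}=0\hspace{0.1cm},$$
so that $|u|^{2}$ is invariant under the (transitive) Anosov geodesic flow and is therefore constant on $SN$. Non-triviality of $u$ allows us to normalize to $|u|\equiv 1$, which gives a smooth map $u:SN\to \mathbb{S}^{1}$. The real $1$-form $\alpha:=i\,u^{-1}du$ is then closed on $SN$, and a direct computation yields $\alpha(X)=iXu/u=i(-if)=f$.

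Next I would invoke the algebraic description of $H^{1}(SN;\mathbb{R})$ for the unit bundle $\pi:SN\to N$, as recalled in Appendix \ref{App C}, to decompose
$$\alpha=\pi^{*}\beta+dh\hspace{0.1cm},$$
for some closed $1$-form $\beta$ on the base $N$ and some $h\in C^{\infty}(SN,\mathbb{R})$. Evaluating on $X$ (and using $d\pi(X)(x,v)=v$) produces the pointwise identity
$$f=\pi_{1}^{*}\beta+Xh\hspace{0.1cm},$$
where as usual $\pi_{1}^{*}\beta(x,v):=\beta_{x}(v)$. Integrating along any closed orbit $\gamma$ of the geodesic flow kills the coboundary term $Xh$ and gives $\int_{\gamma}f=\int_{\gamma}\beta$.

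The final step is a time-reversal argument. The reversed geodesic $\gamma^{-1}$ is itself a closed orbit of the geodesic flow of $(N,h)$, and under the involution $(x,v)\mapsto(x,-v)$ scalar functions are invariant while $1$-forms change sign, so that $\int_{\gamma^{-1}}f=\int_{\gamma}f$ and $\int_{\gamma^{-1}}\beta=-\int_{\gamma}\beta$. Applying the identity above to both $\gamma$ and $\gamma^{-1}$ therefore forces $\int_{\gamma}f=-\int_{\gamma}f=0$ for every closed geodesic, i.e. $\mathcal{I}_{0}^{h}f=0$, and \textbf{Theorem \ref{Th II.5}} concludes $f\equiv 0$. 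The main obstacle is the cohomological decomposition $\alpha=\pi^{*}\beta+dh$, where the content of Appendix \ref{App C} is essential, in order to ensure that the class $[\alpha]\in H^{1}(SN;\mathbb{R})$ is, modulo exact terms, pulled back from the base; once this is granted, the remainder is a short combination of time-reversal and s-injectivity on functions.
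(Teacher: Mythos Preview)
Your proof is correct, and it diverges from the paper only in the second half. Both arguments begin identically: you show $|u|$ is constant (the paper uses this implicitly when writing $\varphi=-du/(iu)$ as a well-defined closed real $1$-form but does not spell it out), invoke the Gysin isomorphism of Appendix~\ref{App C} to write the class as a pullback from the base, and arrive at the pointwise identity $\pi_{0}^{*}f=\pi_{1}^{*}\beta+Xh$ on $SN$.

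From there the two proofs part ways. The paper applies the Pestov identity on an Anosov manifold directly to this equation and obtains $\lVert\pi_{0}^{*}f\rVert_{L^{2}}=0$ by an $L^{2}$ energy estimate. You instead integrate over closed orbits, use the time-reversal involution $(x,v)\mapsto(x,-v)$ to exploit that $\pi_{0}^{*}f$ is even while $\pi_{1}^{*}\beta$ is odd, conclude $\mathcal{I}_{0}^{h}f=0$, and then appeal to \textbf{Theorem~\ref{Th II.5}}. Your route is more elementary in that it avoids re-running a Pestov-type argument and uses the X-ray injectivity for functions as a black box; the parity trick is exactly what makes the reduction to degree-zero tensors possible. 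The paper's route is more self-contained and, as hinted in Section~\ref{Section V}, adapts more readily to situations where the right-hand side involves a $1$-form rather than a function (e.g.\ magnetic potentials), where your even/odd separation would no longer isolate the unknown.
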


\medskip\noindent This result is actually related to a much more general theory (tensor tomography for connections) developed over the last
ten years following the work of Guillarmou-Paternain-Salo-Uhlmann. 
\vspace{0.2cm}
\\We can now show the analog of \textbf{Proposition \ref{Th III.13}} in the non-linear case : 

\begin{Theo}\label{Th III.18}
    Let $(M,g)$ be a smooth compact Riemannian manifold of dimension $n\geq 3$ with smooth boundary. Assume that $(\partial M,g^{\circ})$ is Anosov with simple length spectrum. Let $c:M\rightarrow \mathbb{R}^{+}_{*}$ be a smooth function satisfying $\restriction{c}{\partial M} \equiv 1\hspace{0.1cm}.$ 
    \\If 
    $$\mathrm{spec}(\Lambda_{cg})=\mathrm{spec}(\Lambda_{g})\hspace{0.1cm},$$ 
then 
    $$\restriction{\partial_{\nu}c}{\partial M} \equiv 0\hspace{0.1cm}.$$    
\end{Theo}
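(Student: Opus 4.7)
The plan is to adapt the argument of \textbf{Theorem \ref{Th III.13}} (the isospectral-deformation case) to the present non-linear setting, replacing differentiation in the deformation parameter $s$ by the non-Abelian Liv\v sic theorem \textbf{Proposition \ref{Prop III.14}} together with the vanishing statement \textbf{Theorem \ref{Th III.17}}.

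Since $\restriction{c}{\partial M}\equiv 1$, the two induced boundary metrics coincide: $(cg)^{\circ}=g^{\circ}$. In particular $\Lambda_{cg}$ and $\Lambda_{g}$ live on the same Anosov manifold with simple length spectrum, and the underlying closed geodesics, periods, primitive periods, and linearized Poincar\'e return maps are common to both. Applying \textbf{Corollary \ref{Coro II.8}} at each period and matching the principal wave trace invariants of the two isospectral DN maps, the moduli agree automatically, and only the exponential phase factors can differ; hence, for every closed geodesic $\gamma$ of $g^{\circ}$,
$$\int_{\gamma}\bigl[sub(\Lambda_{cg})-sub(\Lambda_{g})\bigr]\in 2\pi\mathbb{Z}.$$
Inserting the identity of \textbf{Proposition \ref{Prop III.12}} rewrites this as
$$\alpha_{n}\int_{\gamma}\restriction{(\partial_{\nu}c)}{\partial M}\in 2\pi\mathbb{Z}.$$

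Next, I would apply \textbf{Proposition \ref{Prop III.14}} to the smooth real-valued function $f:=\pi_{0}^{*}\bigl(\alpha_{n}\restriction{(\partial_{\nu}c)}{\partial M}\bigr)$ on the unit tangent bundle $S\partial M$, whose integrals along the periodic orbits of the (transitive, Anosov) geodesic flow are exactly the quantities above and therefore lie in $2\pi\mathbb{Z}$. This yields $u\in C^{\infty}(S\partial M,\mathbb{S}^{1})$ with
$$Xu+ifu=0,$$
where $X$ denotes the geodesic vector field on $S\partial M$. Since $|u|\equiv 1$, $u$ is non-trivial, and since $f$ is the lift from $\partial M$ of $\alpha_{n}\restriction{(\partial_{\nu}c)}{\partial M}$, the hypotheses of \textbf{Theorem \ref{Th III.17}} are satisfied. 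Its conclusion forces $f\equiv 0$, hence $\restriction{\partial_{\nu}c}{\partial M}\equiv 0$.

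The main obstacle compared with the linear case is precisely the $2\pi\mathbb{Z}$ ambiguity in the phase identity: in \textbf{Theorem \ref{Th III.13}} differentiation in $s$ killed it and reduced the problem to the injectivity of $\mathcal{I}_{0}^{g^{\circ}}$ on functions. No such parameter is available here, and this is exactly the reason we must first use \textbf{Proposition \ref{Prop III.14}} (in place of the scalar Liv\v sic \textbf{Theorem \ref{Th II.1}}) to turn the mod-$2\pi$ condition into a genuine transport equation, and then \textbf{Theorem \ref{Th III.17}} (in place of a direct appeal to the injectivity of the X-ray transform on functions) to promote the existence of such a transport solution to the desired vanishing. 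Once these two tools are accepted, the theorem follows by the assembly described above.
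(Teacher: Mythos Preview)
Your proof is correct and follows essentially the same route as the paper's own argument: match the principal wave trace invariants (via \textbf{Theorem \ref{Th II.3}}/\textbf{Corollary \ref{Coro II.8}} and \textbf{Proposition \ref{Prop III.12}}) to obtain the $2\pi\mathbb{Z}$ condition on the periods, then invoke \textbf{Proposition \ref{Prop III.14}} followed by \textbf{Theorem \ref{Th III.17}} to conclude. Your added explanation of why the non-Abelian Liv\v sic step replaces the differentiation-in-$s$ of the linear case is accurate and matches the paper's own commentary.
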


\begin{proof}
Since $g$ and $cg$ are two isospectral metrics that coincide at the boundary and $g^{\circ}$ is Anosov with simple length spectrum, \textbf{Theorem \ref{Th II.3}} and \textbf{Lemma \ref{Prop III.12}} implies that for any closed geodesic $\gamma$ for $g^{\circ}$, 

$$exp\left(-i\alpha_{n}\int_{\gamma}\restriction{\left(\partial_{\nu}c\right)}{\partial M}\right)=1$$

\medskip\noindent in the same way as in the proof of \textbf{Proposition \ref{Th III.13}}. 

\medskip\noindent Equivalently, if we put $f:=\alpha_{n}\restriction{\left(\partial_{\nu}c\right)}{\partial M}$, then for any closed geodesic $\gamma$,

$$\int_{\gamma}f\in 2\pi\mathbb{Z}\hspace{0.1cm}.$$

\medskip\noindent Combining \textbf{Proposition \ref{Prop III.14}} and \textbf{Theorem \ref{Th III.17}}, we obtain 
$f\equiv 0\hspace{0.1cm}.$
\end{proof}

\section{Boundary determination of the conformal factor}\label{Section IV}

This section establishes the important results of boundary determination, namely that the spectrum determines the jet of the conformal factor at the boundary. \\When $g$ and $cg$ are Steklov isospectral, we showed at the end of Section \ref{Section III}, under the assumption that $\Lambda_{cg}$ and $\Lambda_{g}$ coincide at the principal order, that the subprincipal part of $\Lambda_{cg}-\Lambda_{g}$ is zero, which has given the nullity of the normal derivative of $c$ at the boundary. \\Here we show more generally that the normal derivative at the boundary at any order $j\geq 2$ is determined by the homogeneous term of order $-j+1$ in the symbol of $\Lambda_{cg}-\Lambda_{g}$ and that this term is zero under the assumption of isospectrality of the metrics. This is established recursively on the order, thanks to an adapted version of the Duistermaat-Guillemin trace formula \cite[Theorem 4.5]{GD} generalizing Guillemin's \cite[Theorem 4]{Gui}, in combination with the injectivity of the X-ray transform \textbf{Theorem \ref{Th II.5}}.
\\In the following, we introduce the necessary formalism and refer the reader to the fundamental works \cite{DH} and \cite{Horm2} for details on the theory of Fourier integral operators (FIO).
\vspace{0.2cm}

First, recall that a pseudodifferential operator $P\in\Psi^{m}(N)$ acting on half-densities is said to be \textit{classical}, which we will denote by $P\in\Psi_{cl}^{m}(N)$, if its full symbol $\sigma_{P}^{full}$ considered in any coordinate chart is \textit{polyhomogeneous} i.e admits an asymptotic expansion \begin{equation}\sigma_{P}^{full}\left(x,\xi \right)\sim \sum_{j=0}^{+\infty}p_{m-j}(x,\xi)\hspace{0.1cm},\label{eq1}\end{equation} with $p_{m-j}$ homogeneous of degree $m-j$ in $\xi$. (see for example \cite[Sect.18.1]{Horm1} for these notions)
\medskip\noindent\\We will denote $\sigma_{P}$ the principal symbol of $P$ and $sub(P)$ the subprincipal symbol respectively given in any local coordinates by $p_{m}$ and
$$p_{m-1}-\frac{1}{2i}\sum_{j=1}^{n}\frac{\partial^{2}p_{m}}{\partial x_{j}\partial\xi_{j}}\hspace{0.1cm}\cdot$$ 
It is well-known that these quantities are invariant under coordinate changes, we refer once again to \cite{Horm1} and \cite{Horm2} for details.
\\In addition, denote by $H_{\sigma_{P}}$ the symplectic gradient of $\sigma_{P}$ for the canonical symplectic structure on $T^{*}N$ and $\left(\Phi_{t}^{P}\right)$ the Hamiltonian flow generated by $H_{\sigma_{P}}$. \\Integral curves for the vector field $H_{\sigma_{P}}$ are called \emph{bicharacteristic curves} or \emph{bicharacteristics}. Note that since $\sigma_{P}$ is constant along the orbits of $\left(\Phi_{t}^{P}\right)$, the compact submanifold 
$$S^{*}N:=\left\lbrace (x,\xi)\in T^{*}N\hspace{0.1cm};\hspace{0.1cm}\sigma_{P}\left( x,\xi\right)=1\right\rbrace\subset T^{*}N$$ 

\medskip\noindent is invariant by this flow. This is obviously a general fact that Hamiltonian flows preserve the energy level hypersurfaces.\\Thus, we will always consider $\left(\Phi_{t}^{P}\right)$ as a flow acting on $S^{*}N$ and consequently, that the bicharacteristics lie on this hypersurface.

\medskip\noindent \\Now as a first step we propose to generalize the approach of Guillemin, namely \cite[Theorem 3]{Gui}. For $k\in\mathbb{N}$, $A\in\Psi^{1}_{cl}(N)$ and $B\in\Psi^{-k}_{cl}(N)$ self-adjoint operators, we note $U_{0}(t):=e^{itA}$, $U(t):=e^{it(A+B)}$ the unitary groups generated by $A$ and $A+B$ and the so-called \emph{return} operator 
$$R(t):=U(t)U_{0}(-t)\hspace{0.1cm},\hspace{0.1cm}t\in\mathbb{R}\hspace{0.1cm}.$$

\medskip\noindent For each $t\in\mathbb{R}$, \cite[Theorem 1.1]{GD} shows that the Schwartz kernels of $e^{itA}$ and $e^{it(A+B)}$, viewed as distributions on $\mathbb{R}\times N\times N$, are Lagrangian distributions of class $I^{-\frac{1}{4}}\left(\mathbb{R}\times N\times N\hspace{0.05cm}, \hspace{0.05cm}C \right)$ defined by the same canonical relation 

$$\begin{aligned}
C:=\lbrace &(t,\tau),(x,\xi),(y,\eta)\hspace{0.1cm}\mid\hspace{0.1cm}(x,\xi),(y,\eta)\in T^{*}N\setminus\lbrace 0\rbrace\hspace{0.05cm},\\
& (t,\tau)\in T^{*}\mathbb{R}\setminus\lbrace 0\rbrace\hspace{0.05cm},\hspace{0.05cm}\tau+\sigma_{A}(x,\xi)=0\hspace{0.05cm},\hspace{0.05cm}(x,\xi)=\Phi^{A}_{t}(y,\eta)\rbrace\hspace{0.1cm}.   
\end{aligned}$$
\\This implies in particular that $U(t)$ and $U_{0}(t)$ are Fourier integral operator of order $0$ defined by the canonical transformation $\Phi_{t}^{A}$.
\vspace{0.1cm}
\\Then by composition, $R(t)$ defines a Fourier integral operator with the identity as canonical transformation and with polyhomogeneous symbol of order $0$, in other words : $$\forall t\in\mathbb{R},\hspace{0.3cm}R(t)\in\Psi_{cl}^{0}(N)\hspace{0.1cm}.$$

\medskip\noindent The following result is a generalization of \cite[Theorem 3]{Gui} :

\begin{theoreme}\label{th5.1}
    Let $k\in\mathbb{N}^{*}$, $A\in\Psi_{cl}^{1}(N)$ and $B\in\Psi_{cl}^{-k}(N)$ self-adjoint operators. Then $$W(t):=R(t)-Id$$ 
    
    \medskip\noindent is a pseudodifferential operator of order $-k$ on $N$ and its principal symbol is given by : $$\sigma_{W(t)}(x,\xi)=i\int_{\gamma^{t}(x,\xi)}\sigma_{B}\hspace{0.3cm},$$ 
    where $\gamma^{t}(x,\xi):\left[0,t\right]\ni s\mapsto \Phi_{s}^{A}(x,\xi)$ is the bicharacteristic arc of length $t$ passing through $(x,\xi)$.
\end{theoreme}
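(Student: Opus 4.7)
The plan is to conjugate out the free evolution and obtain a Heisenberg-picture equation that can be solved iteratively at the symbolic level, then undo the conjugation via Egorov's theorem.

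First I would introduce the auxiliary operator $V(t) := U_{0}(-t) U(t)$, which satisfies $V(0) = Id$ and $R(t) = U_{0}(t) V(t) U_{0}(-t)$. A direct computation yields the Heisenberg-picture ODE
\[
\dot{V}(t) = i B(t) V(t), \qquad B(t) := U_{0}(-t) B U_{0}(t).
\]
By Egorov's theorem applied to the free propagator $U_{0}(t)$, each $B(t)$ belongs to $\Psi^{-k}_{cl}(N)$ with principal symbol $\sigma_{B} \circ \Phi_{-t}^{A}$, depending smoothly on $t$.

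Next I would iterate the Duhamel formula $V(t) = Id + i \int_{0}^{t} B(s)\, V(s)\, ds$ to produce the formal Neumann expansion
\[
V(t) \;\sim\; Id \;+\; i\int_{0}^{t} B(s)\, ds \;+\; \sum_{n \geq 2} i^{n} \int_{0 \leq s_{n} \leq \cdots \leq s_{1} \leq t} B(s_{1}) \cdots B(s_{n})\, ds_{1} \cdots ds_{n}.
\]
By the H\"ormander composition calculus, the $n$-th term is a smooth family of compositions of $n$ operators in $\Psi^{-k}_{cl}(N)$, hence lies in $\Psi^{-nk}_{cl}(N)$. Borel-summing the formal full symbol therefore realizes $V(t) - Id$ as an element of $\Psi^{-k}_{cl}(N)$ modulo a smoothing operator, with principal symbol
\[
\sigma_{V(t) - Id}(x, \xi) = i \int_{0}^{t} \sigma_{B}\bigl(\Phi_{-s}^{A}(x,\xi)\bigr)\, ds.
\]

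Finally, writing $R(t) - Id = U_{0}(t)(V(t) - Id) U_{0}(-t)$ and applying Egorov a second time, I would get
\[
\sigma_{R(t) - Id}(x, \xi) = \sigma_{V(t) - Id}\bigl(\Phi_{t}^{A}(x,\xi)\bigr) = i \int_{0}^{t} \sigma_{B}\bigl(\Phi_{t-s}^{A}(x,\xi)\bigr)\, ds,
\]
which the change of variables $u = t - s$ converts into $i \int_{\gamma^{t}(x,\xi)} \sigma_{B}$, exactly as claimed. The only delicate point is to ensure that the Duhamel iteration genuinely produces a symbolic asymptotic expansion, rather than merely an operator-norm convergent Neumann series: this is handled precisely by the composition calculus bound $\Psi^{-k}_{cl} \circ \cdots \circ \Psi^{-k}_{cl} \subset \Psi^{-nk}_{cl}$ together with Borel summation, and once this is in hand both the order of $W(t) = R(t) - Id$ and its principal symbol drop out of the above identities. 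Tracking the sign conventions in Egorov's theorem (which is where one must be most careful) is what ultimately orients the bicharacteristic arc in the correct direction $s : 0 \to t$.
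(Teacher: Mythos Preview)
Your argument is correct and arrives at the same Duhamel integral as the paper, but the route is packaged differently. The paper differentiates $W(t)=R(t)-Id$ directly to obtain the operator ODE
\[
-i\dot W(t)=[A,W(t)]+B+BW(t),\qquad W(0)=0,
\]
and then, using that $R(t)\in\Psi^{0}_{cl}(N)$ (established just before the theorem via the FIO composition), expands the full symbol $w(t)\sim\sum_{j\ge0}w_{-j}(t)$ and reads off transport equations along $H_{\sigma_A}$ for each homogeneous component: $w_{-j}(t)\equiv0$ for $j<k$, and $\dot w_{-k}=H_{\sigma_A}w_{-k}+i\sigma_B$, whose Duhamel solution is exactly $i\int_0^t\sigma_B\circ\Phi^A_s\,ds$. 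No Egorov and no Neumann series appear.

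Your interaction-picture approach conjugates away the commutator term by passing to $V(t)=U_0(-t)U(t)$, then recovers $W(t)$ via Egorov; this is a perfectly legitimate and standard alternative. One small point: your Borel-summation step tacitly relies on the same a priori input the paper uses, namely that $R(t)$ (hence $V(t)$) is already a $\Psi^0_{cl}$ operator. Without that, the Duhamel remainder $i^{N+1}\int B(s_1)\cdots B(s_{N+1})V(s_{N+1})\,ds$ is only $L^2$-bounded, not smoothing, so the Neumann series alone does not place $V(t)-Id$ in $\Psi^{-k}$. Once you grant $V(t)\in\Psi^0_{cl}$, however, the single Duhamel identity $V(t)-Id=i\int_0^t B(s)V(s)\,ds$ already gives $V(t)-Id\in\Psi^{-k}$ with the claimed principal symbol, and the higher Neumann terms are unnecessary. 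The paper's symbol-by-symbol transport argument is slightly more elementary in that it never leaves the scalar symbol calculus; your version has the advantage of being more conceptual and of making the role of the bicharacteristic flow manifest through Egorov.
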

\begin{proof} 
Differentiating in $t$ the expression of $W(t)$, we get $$\begin{aligned}[t]-i\Dot{W}(t) &=(A+B)e^{it(A+B)}e^{-itA}-e^{it(A+B)}Ae^{-itA} \\
&= (A+B)R(t)-R(t)A\\
&= (A+B)\left(W(t)+Id\right)-\left(W(t)+Id\right)A\\
&= AW(t)-W(t)A+B+BW(t)\hspace{0.1cm},
\end{aligned}$$
where the second equality comes from the fact that $A$ and $e^{-itA}$ commute.
\\In other words, 
\begin{equation}
\left \{
\begin{array}{r @{=} l}
    -i\Dot{W}(t)\hspace{0.1cm} & \hspace{0.1cm}\left[A,W(t)\right]+B+BW(t) \\
          W(0) \hspace{0.1cm}&  \hspace{0.1cm}0
\end{array}
\right.
\label{eq2}    
\end{equation}

\medskip\noindent Then let us note $w(t)$ the full symbol of $W(t)\in\Psi_{cl}^{0}(N)$ in any local coordinates and \begin{equation}
w(t)\sim \sum_{j=0}^{+\infty}w_{-j}(t)\hspace{0.1cm}.\label{eq3}    
\end{equation}

\medskip\noindent If $b$ denotes the full symbol of $B\in\Psi_{cl}^{-k}(N)$ in the same local coordinates, we also note \begin{equation}
b\sim \sum_{j=0}^{+\infty}b_{-k-j}\hspace{0.1cm}.\label{eq4}    
\end{equation}

\medskip\noindent Inserting \eqref{eq3} and \eqref{eq4} into \eqref{eq2} and collecting homogeneous terms with same order $-j$, one can access the leading term in the symbol of $W(t)$ :
\vspace{0.3cm}
\begin{itemize}
    \item[$\bullet$] For $j\in\lbrace 0,...,k-1\rbrace$, we show by induction that $w_{-j}(t)=0$ : First, $w_{0}(t)$ satisfies $$-i\Dot{w}_{0}(t)=i^{-1}H_{\sigma_{A}}w_{0}(t)\hspace{0.1cm},$$ so that 
    $$
\left \{
\begin{array}{r @{=} l}
    \Dot{w}_{0}(t)\hspace{0.1cm} & \hspace{0.1cm}H_{\sigma_{A}}w_{0}(t) \\
          w_{0}(0) \hspace{0.1cm}&  \hspace{0.1cm}0
\end{array}
\right.
$$ 
and then we can write $$w_{0}(t)=\exp\left(tH_{\sigma_{A}}\right)w_{0}(0)= 0\hspace{0.1cm}.$$
Next for any $j\in\lbrace 0,...,k-2\rbrace$, if we assume that $w_{-j}(t)=0$, 
we obtain in the same way $$w_{-j-1}(t)=\exp\left(tH_{\sigma_{A}}\right)w_{-j-1}(0)= 0\hspace{0.1cm}.$$

\item[$\bullet$] For $j= k$, we obtain :
$$-i\Dot{w}_{-k}(t)=i^{-1}H_{\sigma_{A}}w_{-k}(t)+b_{-k}+b_{-k}w_{0}(t)\hspace{0.1cm},$$
and since $w_{0}(t)=0$\hspace{0.1cm},
    $$
\left \{
\begin{array}{r @{=} l}
    \Dot{w}_{-k}(t)\hspace{0.1cm} & \hspace{0.1cm}H_{\sigma_{A}}w_{-k}(t) + ib_{-k}\\
          w_{-k}(0) \hspace{0.1cm}&  \hspace{0.1cm}0
\end{array}
\right.
$$
Then we can apply Duhamel's principle to get $$\begin{aligned}[t]w_{-k}(t)&=\exp\left(tH_{\sigma_{A}}\right)w_{-k}(0)+i\int_{0}^{t}\exp\left((t-s)H_{\sigma_{A}}\right)b_{-k}\hspace{0.1cm}ds\\
&= i\int_{0}^{t}\sigma_{B}\circ\Phi_{s}^{A}\hspace{0.1cm}ds
\end{aligned}$$

\end{itemize}

\end{proof}

\medskip\noindent In the same spirit as \cite[Theorem 4]{Gui} and \cite[Theorem 4.5]{GD} (recall that a version of this theorem in the case of the DN map is given in \textbf{Theorem \ref{Th II.3}}) we can then formulate a trace formula adapted to our problem :

\begin{theoreme}\label{Th IV.2}
Let $k\in\mathbb{N}^{*}$, $A\in\Psi_{cl}^{1}(N)$ and $B\in\Psi_{cl}^{-k}(N)$ such that $A$ and $A+B$ are self-adjoint operators. Assume $A$ is elliptic.
If $A$ only admits a finite number of closed bicharacteristic curves $\gamma_{1},...,\gamma_{r}$ of period $T$ and each of these curves are non-degenerate, then 
   $$S(t):=\mathrm{Tr}\left(U(t)-U_{0}(t)\right)$$ 
   \\is a tempered distribution on $\mathbb{R}$ admitting an isolated singularity in $t=T$ given by a Lagrangian distribution $e_{T}$ with 
   $$Suppsing\left(e_{T}\right)=\lbrace T\rbrace$$  
and complete asymptotic expansion $$e_{T}(t)\underset{t\rightarrow T}{\sim} \hspace{0.1cm} c_{T,-1}(t-T+i0)^{-1}+\sum_{j=0}^{+\infty}c_{T,j}(t-T+i0)^{j}\log(t-T+i0)\hspace{0.1cm}.$$ 
The leading coefficient is expressed as 
   
   $$c_{T,-1}=\sum_{j=1}^{r}\frac{\lvert T_{j}^{\sharp}\rvert e^{im_{j}\frac{\pi}{2}}e^{-iT\hspace{0.05cm}\overline{\gamma_{j}}}}{\lvert \det\left(Id-\mathcal{P}_{j}\right)\rvert^{1/2}}\hspace{0.1cm}i\int_{\gamma_{j}}\sigma_{B}\hspace{0.1cm},$$ where $$\overline{\gamma_{j}}:=\frac{1}{T}\int_{\gamma_{j}}sub(A)\hspace{0.1cm},$$ and $\mathcal{P}_{j}$, $T_{j}^{\sharp}$, $m_{j}$ are respectively the linearized Poincaré map, the primitive period and the Maslov index of $\gamma_{j}$.
\end{theoreme}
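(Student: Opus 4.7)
My plan is to reduce the statement to the standard Duistermaat–Guillemin trace formula (Theorem \ref{Th II.3}) applied to a modified Fourier integral operator. The starting observation is the algebraic identity
\[
U(t) - U_{0}(t) \;=\; \bigl(R(t) - \mathrm{Id}\bigr)\,U_{0}(t) \;=\; W(t)\,U_{0}(t),
\]
which reduces the analysis of $S(t) = \mathrm{Tr}(U(t) - U_{0}(t))$ to that of $\mathrm{Tr}\bigl(W(t) U_{0}(t)\bigr)$. By Theorem \ref{th5.1} I already know that $W(t) \in \Psi_{cl}^{-k}(N)$ with principal symbol
\[
\sigma_{W(t)}(x,\xi) \;=\; i\int_{0}^{t} \sigma_{B}\bigl(\Phi_{s}^{A}(x,\xi)\bigr)\,ds.
\]
Since $U_{0}(t)$ is a Fourier integral operator of order $0$ with canonical relation $C$ (the graph of $\Phi_{t}^{A}$, as recalled in the discussion preceding Theorem \ref{th5.1}), the composition $W(t) U_{0}(t)$ is again an FIO associated with $C$; its principal symbol is the product of the principal symbol of $U_{0}(t)$ with $\sigma_{W(t)}$ evaluated along the canonical relation.

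Next I would run the standard clean intersection and stationary phase analysis exactly as in the proof of \cite[Theorem 4.5]{GD} recalled in Theorem \ref{Th II.3}. The ellipticity of $A$ of order one, the hypothesis that only finitely many bicharacteristics $\gamma_{1},\ldots,\gamma_{r}$ of $A$ have period $T$, and the non-degeneracy of each $\gamma_{j}$ together ensure the clean intersection condition at $t = T$. The classical machinery then produces the announced isolated Lagrangian singularity at $t=T$ and the stated complete asymptotic expansion, the leading contribution coming from the stationary points of the phase function on the fixed-point set of $\Phi_{T}^{A}$, i.e. from the closed orbits $\gamma_{j}$ themselves.

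The only modification with respect to the computation for $\mathrm{Tr}(U_{0}(t))$ alone is the presence of the extra multiplicative factor $\sigma_{W(T)}$ in the principal symbol along $C$. The key observation is that, by Theorem \ref{th5.1}, at any point $(x,\xi) \in \gamma_{j}$ one has $\sigma_{W(T)}(x,\xi) = i \int_{\gamma_{j}} \sigma_{B}$, a quantity depending only on the orbit and not on the base point. This factor is therefore constant along $\gamma_{j}$ and pulls out of the stationary phase integral, multiplying the standard Duistermaat–Guillemin leading coefficient computed for $U_{0}(t)$ alone and yielding precisely the formula for $c_{T,-1}$ stated in the theorem.

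I expect the main obstacle to lie in the careful bookkeeping of half-densities, of the Maslov factor $e^{im_{j}\pi/2}$ and of the determinant $|\det(\mathrm{Id} - \mathcal{P}_{j})|^{1/2}$ through the FIO composition and the distributional trace. This step is delicate but strictly parallel to the arguments in \cite[Theorem 4]{Gui} and \cite[Theorem 4.5]{GD}; no further analytic input beyond Theorem \ref{th5.1} and the classical Fourier integral operator calculus of \cite{DH} and \cite{Horm2} is needed.
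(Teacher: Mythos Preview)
Your proposal is correct and follows essentially the same route as the paper: the factorization $U(t)-U_{0}(t)=W(t)U_{0}(t)$, the use of Theorem~\ref{th5.1} to identify $W(t)\in\Psi_{cl}^{-k}(N)$ and its principal symbol, the FIO composition giving a Lagrangian distribution associated with the same canonical relation $C$, and then the clean intersection machinery of \cite[Section 5--6]{GD} applied exactly as in \cite[Theorem 4.5]{GD}, with the extra factor $\sigma_{W(T)}=i\int_{\gamma_{j}}\sigma_{B}$ (constant along each $\gamma_{j}$) multiplying the standard Duistermaat--Guillemin leading coefficient. The paper's own proof is precisely this argument, phrased via the Schwartz kernel $v\in I^{-k-\frac{1}{4}}(\mathbb{R}\times N\times N,C)$ and the identity $S(t)=\pi_{*}\Delta^{*}v$.
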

\begin{proof}
The proof is similar to that of \cite[Theorem 4]{Gui}, it uses the same arguments as \cite[Theorem 4.5]{GD} on the symbolic calculus for Fourier integral operators. \\For the sake of convenience, let us briefly review the main points in our setting :
\vspace{0.2cm}
\\As we pointed out in the introduction of this section, if we denote $\mu_{0}(t,x,y)$ the Schwartz kernel of $U_{0}(t)$, 
$$\mu_{0}\in I^{-\frac{1}{4}}\left(\mathbb{R}\times N\times N\hspace{0.05cm} ,\hspace{0.05cm} C \right)\hspace{0.1cm}.$$ 

\medskip\noindent Moreover, by \textbf{Theorem \ref{th5.1}}, $W(t)\in\Psi^{-k}(N)$ and consequently, the distribution 

$$U(t)-U_{0}(t)=W(t)U_{0}(t)$$ 

\medskip\noindent defines a Fourier integral operator ; more precisely its Schwartz kernel $v$ is a Lagrangian distribution of order $-k-\frac{1}{4}$ on $\mathbb{R}\times N\times N$ : 

$$v\in I^{-k-\frac{1}{4}}\left(\mathbb{R}\times N\times N\hspace{0.05cm} ,\hspace{0.05cm} C \right)\hspace{0.1cm}.$$

\medskip\noindent In addition, it is easy to show that 
$$S(t)=\pi_{*}\Delta^{*}v\hspace{0.1cm},$$ 
where $\Delta^{*}$ is the pull-back by the diagonal map $\Delta:\mathbb{R}\times N\rightarrow\mathbb{R}\times N\times N$ and $\pi_{*}$ is the push-forward by the projection map $\pi:\mathbb{R}\times N\rightarrow\mathbb{R}$ (see for example \cite[Lemma 6.4]{GD}).
\\Then \textbf{Theorem \ref{Th IV.2}} follows in the same way as \cite[Section 6]{GD} from \textit{clean intersection theory} developed in \cite[Section 5]{GD}, which can be applied here by assumption on the periodic orbits $\gamma_{1},...,\gamma_{r}$ . 
\\The leading term in the asymptotic expansion is given by the principal symbol of $\pi_{*}\Delta^{*}v$ and using notations from \cite[Section 4]{Gui}, it can be expressed in terms of the principal symbol $\sigma(v)$ of $v$ as follows :
$$\sigma_{S(t)}=\pi_{*}\Delta^{*}\sigma(v)=\sum_{j=1}^{r}\restriction{\sigma(v)}{C_{j}}\hspace{0.1cm},$$
where

$$C_{j}:=\lbrace(t,\tau),(x,\xi),(y,\eta)\hspace{0.1cm}\mid\hspace{0.1cm}(x,\xi)=(y,\eta)\in\gamma_{j}\hspace{0.05cm},\hspace{0.05cm}t=T\rbrace\subset C\hspace{0.1cm}.$$ 

\medskip\noindent \\Moreover, we note that the principal symbol of $v$ is given by 

$$\sigma(v)\left((t,\tau),(x,\xi),(y,\eta)\right)=\sigma_{W(t)}(x,\xi) \cdot \sigma(\mu_{0})\left((t,\tau),(x,\xi),(y,\eta)\right)\hspace{0.1cm},$$

\medskip\noindent \\where $\sigma(\mu_{0})$ denotes the principal symbol of $\mu_{0}$ .

\medskip\noindent By \cite[Theorem 4.5]{GD}, we know that 
$$\restriction{\sigma(\mu_{0})}{C_{j}}=\frac{\lvert T_{j}^{\sharp}\rvert e^{im_{j}\frac{\pi}{2}}e^{-iT\hspace{0.05cm}\overline{\gamma_{j}}}}{\lvert \det\left(Id-\mathcal{P}_{j}\right)\rvert^{1/2}}\hspace{0.1cm}\cdot$$
\\Finally, by \textbf{Theorem \ref{th5.1}}, 
$$\restriction{\sigma_{W(t)}}{C_{j}}=i\int_{\gamma_{j}}\sigma_{B}\hspace{0.1cm},$$ 

\medskip\noindent which conclude the sketch of proof.
\end{proof}

\begin{Rem}
In our case, the Maslov index $m_{j}$ appearing in this result is the Morse index of the periodic geodesic $\gamma_{j}$ (or more precisely here the projection of $\gamma_{j}$) just as in \textbf{Theorem \ref{Th II.3}}. Recall that these indices are zero in the case where the underlying manifold $N$ is Anosov (see \textbf{Remark \ref{Rq II.9}}).
\end{Rem}

In the following, we will use suitable coordinates on the manifold $(M,g)$, called \textit{boundary normal coordinates}. \\If $x':=(x_{1},...,x_{n-1})$ denotes any local coordinates on the boundary near $p\in\partial M$, we can define the associated boundary normal coordinates $(x',x_{n})$ at $p$ by considering $x_{n}$ to be the distance to the boundary along unit speed geodesics normal to $\partial M$. See for example \cite[Section 1]{LU} for more details. \\It follows that in these coordinates, $x_{n}>0$ in $M$ and $\partial M$ is locally characterized by $x_{n}=0$. The metric in these coordinates has the form 

$$g=\sum_{1\leq \alpha,\beta\leq n-1}g_{\alpha\beta}\left(x\right)dx^{\alpha}\otimes dx^{\beta}+ dx^{n}\otimes dx^{n}\hspace{0.1cm}.$$

\medskip\noindent \\Finally, this local coordinate system naturally defines a diffeomorphism 
$$\phi:\mathcal{U}_{p}\rightarrow \mathcal{V}_{p}\hspace{0.1cm},$$
 where $\mathcal{U}_{p}$ is some open set in the half-space 
$$\mathbb{R}^{n}_{+}:=\lbrace (x_{1},...,x_{n})\hspace{0.1cm} ;\hspace{0.1cm} x_{n}\geq 0 \rbrace$$ 
and $\mathcal{V}_{p}:=\phi\left(\mathcal{U}_{p}\right)$ is a neighborhood of $p$ in $M$. For two metrics $g_{1}$ and $g_{2}$ on $M$, let $\phi_{1}$ and $\phi_{2}$ be such diffeomorphisms, then 
$$\phi:=\phi_{1}\circ\phi_{2}^{-1}$$ 
defines a local diffeomorphism sending the unit speed geodesics for $g_{2}$ normal to $\partial M$ onto the unit speed geodesics for $g_{1}$ normal to $\partial M$. 
Recall that this diffeomorphism can be extended globally into a diffeomorphism that is again denoted $\phi$ and that if $(x',x_{n})$ are boundary normal coordinates near a fixed boundary point for the metric $g_{1}$, they are also boundary normal coordinates for the metric $\phi^{*}g_{2}$ . \\This leads to an important remark that we will use in the following to exploit the formulas in \cite{LU} :

\begin{Rem}\label{rq 5.1}
Since $\phi$ is a diffeomorphism of $M$ fixing the boundary we have the well-known gauge invariance 
$$\Lambda_{\phi^{*}g_{2}}=\Lambda_{g_{2}}$$ 

\medskip\noindent and then in particular, assuming that $g_{2}$-boundary normal coordinates are also $g_{1}$-boundary normal coordinates does not change the DN map $\Lambda_{g_{2}}$.
\\So for our problem, we can always assume that $g$-boundary coordinates are also $cg$-boundary coordinates without loss of generality.
\end{Rem}

We now have the tools we need to obtain the desired boundary determination results :

\medskip\noindent First, \textbf{Corollary \ref{coro 1.3.1}} shows that if the total symbols of $\Lambda_{cg}$ and $\Lambda_{g}$ coincide at principal and subprincipal order, these two operators differ only by a pseudodifferential operator of order $-1$. So in this case, as already mentioned, we are now able to combine \textbf{Theorem \ref{Th IV.2}} with the injectivity of the X-ray transform on functions (\textbf{Theorem \ref{Th II.5}}) to access the homogeneous terms of orders $j\leq -1$ contained in the symbol of $\Lambda_{cg}-\Lambda_{g}\in\Psi^{-1}(\partial M)$ and, more precisely, to show recursively that they are all zero if the metrics $g$ and $cg$ are Steklov isospectral. 
\vspace{0.1cm}
\medskip\noindent \\This is the subject of the next theorem and, following the work of Lee and Uhlmann \cite[Proposition 1.3]{LU}, we begin with a lemma specifying these homogeneous terms of order $j\leq -1$ :

\begin{lem}\label{lem 5.1}
Let $(M,g)$ be a smooth compact Riemannian manifold of dimension $n\geq 2$ with smooth boundary and $c:M\rightarrow \mathbb{R}^{+}_{*}$ be a smooth function satisfying $\restriction{c}{\partial M} \equiv 1$.
Let $$\sigma^{full}_{\Lambda_{cg}}\sim \sum_{j=-1}^{+\infty}\widetilde{a}_{-j}$$
and $$\sigma^{full}_{\Lambda_{g}}\sim \sum_{j=-1}^{+\infty}a_{-j}$$ 
be respectively the full symbols of $\Lambda_{cg}$ and $\Lambda_{g}$ considered in any local coordinate system 
$x':=(x_{1},...x_{n-1})$ on the boundary. 

\medskip\noindent In the $g$-boundary normal coordinates $(x',  x_{n})$ :

\medskip\noindent For any $j\geq 1$, if 
$$\restriction{\partial_{n}^{m}c}{x_{n}= 0}= 0\hspace{0.2cm},\hspace{0.2cm}\forall \hspace{0.1cm}j\geq m\geq 1\hspace{0.2cm},$$
then
$$\widetilde{a}_{-j}-a_{-j}=\frac{\left(n-2\right)}{4}\left(-\frac{1}{2\lvert\xi'\rvert_{g^{\circ}}}\right)^{j}\restriction{\partial_{n}^{j+1}c}{x_{n}= 0}\hspace{0.1cm}.$$
\end{lem}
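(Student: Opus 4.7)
The plan is to adapt the factorization approach of Lee and Uhlmann \cite{LU} for the anisotropic Calderón problem to the conformal setting. By Remark \ref{rq 5.1}, I may work in a single coordinate chart $(x',x_n)$ which is simultaneously $g$- and $cg$-boundary normal. Since $c|_{\partial M}=1$ implies $\nu_{cg}|_{\partial M}=\nu_g|_{\partial M}$, multiplying by $c$ preserves $cg$-harmonicity and does not alter the associated DN map, so one may work instead with $-c\Delta_{cg}$, which has the same normal form as $-\Delta_g$, namely
$$-\partial_n^2 - E\partial_n + Q\hspace{0.1cm},$$
with $E$ a smooth function and $Q$ an $x_n$-dependent family of second-order pseudodifferential operators on $\partial M$. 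A direct computation gives
$$\widetilde E - E_g = \frac{n-2}{2} c^{-1}\partial_n c\hspace{0.1cm},\qquad \widetilde Q - Q_g = -\frac{n-2}{2} c^{-1}\partial_\alpha c\cdot g^{\alpha\beta}\partial_\beta\hspace{0.1cm}.$$

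Both operators factor modulo smoothing as $-(\partial_n + E + B)(\partial_n - B)$, so that $\Lambda \equiv -B|_{x_n=0}$ and the full symbol $b$ satisfies an asymptotic Riccati identity $b\# b + eb + \partial_n b \sim q$. Order by order in $\xi'$ this yields $b_1=|\xi'|_{g^\circ_{x_n}}$, together with an algebraic recursion of the form
$$b_{-k} = \frac{1}{2 b_1}\bigl[q_{-k}-\partial_n b_{-k+1} - (\text{Moyal and lower-order corrections})\bigr]\hspace{0.1cm}.$$
Under the hypothesis $\partial_n^m c|_{x_n=0}=0$ for $1\leq m\leq j$, Taylor expansion gives $\widetilde e - e = \frac{n-2}{2 \, j!}(\partial_n^{j+1}c)|_{x_n=0}\, x_n^j + O(x_n^{j+1})$, every homogeneous component of $\widetilde q - q$ is $O(x_n^{j+1})$, and the principal symbols coincide so that $\widetilde b_1 = b_1$ identically.

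The core argument is an induction on $k$: for $0\leq k\leq j$,
$$\widetilde b_{-k} - b_{-k} = -\frac{n-2}{4}\left(-\frac{1}{2 b_1}\right)^k\frac{x_n^{j-k}}{(j-k)!}(\partial_n^{j+1}c)|_{x_n=0} + O(x_n^{j-k+1})\hspace{0.1cm}.$$
The base case $k=0$ follows from the explicit formula $\widetilde b_0 - b_0 = \frac{1}{2 b_1}\bigl[(\widetilde q_1 - q_1) - (\widetilde e - e) b_1\bigr]$ combined with the Taylor expansion of $\widetilde e - e$. The inductive step uses $2 b_1 (\widetilde b_{-k-1} - b_{-k-1}) = -\partial_n(\widetilde b_{-k} - b_{-k}) + (\text{higher order in } x_n)$: differentiating the Ansatz in $x_n$ reduces the Taylor order by one and produces a $\frac{1}{(j-k-1)!}$, while dividing by $2 b_1$ introduces exactly one factor $-(2 b_1)^{-1}$. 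Evaluating at $k=j$ and $x_n=0$ yields
$$(\widetilde b_{-j} - b_{-j})|_{x_n=0} = -\frac{n-2}{4}\left(-\frac{1}{2|\xi'|_{g^\circ}}\right)^j(\partial_n^{j+1}c)|_{x_n=0}\hspace{0.1cm},$$
and the identity $a_{-k} = -b_{-k}|_{x_n=0}$ then yields the stated formula for $\widetilde a_{-j}-a_{-j}$.

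The main obstacle is verifying in the inductive step that no extraneous contribution appears at the leading $x_n^{j-k-1}$-order. Moyal-product corrections in $b\# b$ involve only $x'$- and $\xi'$-derivatives, so differences of earlier $\widetilde b_{-\ell}$ enter the recursion multiplied either by the $O(x_n^{j+1})$ vanishing of $\widetilde q - q$, by the tangential $\partial_\alpha c = O(x_n^{j+1})$ perturbation in $\widetilde Q$, or by another $\widetilde b_{-\ell} - b_{-\ell}$ with $\ell < k$ which by induction is itself $O(x_n^{j-\ell})$. In each case the cross term acquires at least one extra power of $x_n$ beyond what is needed, so it is absorbed in the $O(x_n^{j-k})$ remainder. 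This careful bookkeeping is routine but constitutes the main technical content of the proof.
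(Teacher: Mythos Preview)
Your argument is correct, but it takes a noticeably different route from the paper's own proof. The paper does not redo the Riccati recursion: it simply invokes \cite[Proposition~1.3]{LU}, which already packages the induction you carry out and states that in boundary normal coordinates
\[
a_{-j}=\left(-\frac{1}{2|\xi'|}\right)^{j+2}\sum_{\alpha,\beta}(\partial_n^j k^{\alpha\beta})\xi_\alpha\xi_\beta + T_{-j}\,,
\]
where $T_{-j}$ depends only on normal derivatives of the metric of order at most $j$. The proof then reduces to a direct algebraic computation of $\partial_n^j(\widetilde{k}^{\alpha\beta}-k^{\alpha\beta})$ on the boundary, which under the hypothesis collapses to $(n-2)(\partial_n^{j+1}c)g^{\alpha\beta}$; the $T_{-j}$ remainders cancel automatically since they involve only lower normal derivatives. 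Your approach instead re-derives the content of that proposition in the conformal setting by tracking $x_n$-Taylor orders of $\widetilde{b}_{-k}-b_{-k}$ through the recursion. The device of replacing $-\Delta_{cg}$ by $-c\Delta_{cg}$ so that $\widetilde{b}_1=b_1$ holds identically in $x_n$ (not just at $x_n=0$) is a genuine simplification that makes the induction clean; without it the $\partial_n(\widetilde{b}_1-b_1)$ term would contribute at the leading order and spoil the base case. What the paper's approach buys is brevity and a clear separation of the structural input (the Lee--Uhlmann formula) from the conformal computation; what yours buys is self-containment and an explicit mechanism showing exactly how each extra normal derivative of $c$ is traded for one symbol order.
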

\begin{proof}
Let $x':=(x_{1},...x_{n-1})$ be any local coordinate system on the boundary and $(x',x_{n})$ the associated $g$-boundary normal coordinates. By \textbf{Remark \ref{rq 5.1}}, we can assume that these coordinates are also $cg$-boundary coordinates.
\\Using \cite[Proposition 1.3]{LU}, we denote by $(g^{\alpha\beta})$ the inverse matrix of $(g_{\alpha\beta})$ and introduce for any $1\leq\alpha ,\beta\leq n-1$, 

$$h^{\alpha\beta}:=\partial_{n}g^{\alpha\beta}\hspace{0.2cm},\hspace{0.2cm}h=\sum_{1\leq a ,b\leq n-1}g_{ab}h^{ab}\hspace{0.1cm},$$
$$\widetilde{h}^{\alpha\beta}:=\partial_{n}(cg)^{\alpha\beta}\hspace{0.2cm},\hspace{0.2cm}\widetilde{h}=\sum_{1\leq a ,b\leq n-1}(cg)_{ab}\widetilde{h}^{ab}$$
and the quadratic forms $$k^{\alpha\beta}:=h^{\alpha\beta}-hg^{\alpha\beta}\hspace{0.2cm},\hspace{0.2cm}\widetilde{k}^{\alpha\beta}:=\widetilde{h}^{\alpha\beta}-\widetilde{h}(cg)^{\alpha\beta}$$ respectively associated with the metrics $g$ and $cg$.  
\\Noticing that $(cg)^{\alpha\beta}=c^{-1}g^{\alpha\beta}$, a simple calculation gives 

$$\widetilde{k}^{\alpha\beta}=\left((n-1)c^{-1}-1\right)\frac{\partial_{n}c}{c^{2}}g^{\alpha\beta}+c^{-1}\left(h^{\alpha\beta}-c^{-1}hg^{\alpha\beta}\right)\hspace{0.1cm},$$

\medskip\noindent then \cite[Proposition 1.3]{LU} gives for $j\geq 1$, 
$$\widetilde{a}_{-j}=\left(-\frac{1}{2\lvert\xi'\rvert}\right)^{j+2}\sum_{1\leq \alpha,\beta \leq n-1}\left(\partial_{n}^{j}\widetilde{k}^{\alpha\beta}\right)\xi_{\alpha}\xi_{\beta} + T_{-j}((cg)_{\alpha\beta})\hspace{0.1cm},$$ where $T_{-j}((cg)_{\alpha\beta})$ is an expression involving only the boundary values of $(cg)_{\alpha\beta}$, $(cg)^{\alpha\beta}$ and their normal derivatives of order at most $j$.
\\Obviously we also have the same formula for $a_{-j}$, just replacing $\widetilde{k}^{\alpha\beta}$ by $k^{\alpha\beta}$ and $cg$ by $g$. 
Now suppose that for any $j\geq 1$,

$$\restriction{\partial_{n}^{m}c}{x_{n}= 0}= 0\hspace{0.2cm},\hspace{0.2cm}\forall \hspace{0.1cm}j\geq m\geq 1\hspace{0.2cm}.$$ 

\medskip\noindent\\Then we see that on the boundary, for any $j\geq 1$, 

$$\begin{aligned}[t]\partial_{n}^{j}\widetilde{k}^{\alpha\beta}&=(n-2)\left(\partial_{n}^{j+1}c\right)g^{\alpha\beta}+\partial_{n}^{j}h^{\alpha\beta}-\partial_{n}^{j}\left(hg^{\alpha\beta}\right)\\
&=(n-2)\left(\partial_{n}^{j+1}c\right)g^{\alpha\beta}+\partial_{n}^{j}k^{\alpha\beta}
\end{aligned}$$
\medskip\noindent\\and, of course, the equality of the remainders 

$$T_{-j}((cg)_{\alpha\beta})=T_{-j}(g_{\alpha\beta})\hspace{0.1cm}.$$
Thus, noticing that 

$$\sum_{1\leq \alpha,\beta \leq n-1}g^{\alpha\beta}\xi_{\alpha}\xi_{\beta}=\lvert\xi'\rvert_{g^{\circ}}^{2}$$

\medskip\noindent and noting $$T_{-j}:=\sum_{1\leq \alpha,\beta \leq n-1}T_{-j}(g_{\alpha\beta})\hspace{0.1cm},$$
we get for $j\geq 1$,

$$\begin{aligned}[t] \widetilde{a}_{-j} &=\left(-\frac{1}{2\lvert\xi'\rvert}\right)^{j+2}\sum_{1\leq \alpha,\beta \leq n-1}\left[(n-2)\left(\restriction{\partial_{n}^{j+1}c}{x_{n}= 0}\right)g^{\alpha\beta}+\partial_{n}^{j}k^{\alpha\beta}\right]\xi_{\alpha}\xi_{\beta} + T_{-j}\\
&= \frac{\left(n-2\right)}{4\lvert\xi'\rvert_{g^{\circ}}^{2}}\left(-\frac{1}{2\lvert\xi'\rvert_{g^{\circ}}}\right)^{j}\restriction{\partial_{n}^{j+1}c}{x_{n}= 0}\sum_{1\leq \alpha,\beta \leq n-1}g^{\alpha\beta}\xi_{\alpha}\xi_{\beta}\hspace{0.1cm}+ \hspace{0.1cm}a_{-j}\\
&= \frac{\left(n-2\right)}{4}\left(-\frac{1}{2\lvert\xi'\rvert_{g^{\circ}}}\right)^{j}\restriction{\partial_{n}^{j+1}c}{x_{n}= 0} \hspace{0.1cm}+ \hspace{0.1cm}a_{-j}\hspace{0.1cm}.
\end{aligned}$$ 

\end{proof}

\medskip\noindent We deduce the boundary determination of the jet of the conformal factor $c$ at any point $p\in\partial M$ :
\begin{theoreme}\label{Th IV.6}
    Let $(M,g)$ be a smooth compact Riemannian manifold of dimension $n\geq 3$ with smooth boundary. Assume that $(\partial M,g^{\circ})$ is Anosov with simple length spectrum. Let $c:M\rightarrow \mathbb{R}^{+}_{*}$ be a smooth function satisfying $\restriction{c}{\partial M} \equiv 1$.
    \\If 
    $$\mathrm{spec}(\Lambda_{cg})=\mathrm{spec}(\Lambda_{g})\hspace{0.1cm},$$ 

\medskip\noindent then the boundary normal derivatives of the conformal factor are zero :
$$\restriction{\partial_{\nu}^{j}c}{\partial M}=0\hspace{0.1cm},\hspace{0.1cm}\forall j\geq 1\hspace{0.1cm}.$$
Consequently, the Taylor series of $c$ at any point $p\in\partial M$ in any local coordinates are reduced to the constant function identically equal to $1$.
\end{theoreme}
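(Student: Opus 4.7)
The plan is to proceed by strong induction on $j \geq 1$, combining \textbf{Lemma \ref{lem 5.1}} with the generalized trace formula \textbf{Theorem \ref{Th IV.2}} and the injectivity of the X-ray transform on functions \textbf{Theorem \ref{Th II.5}}. The base case $j=1$ is exactly \textbf{Theorem \ref{Th III.18}}.

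For the inductive step, fix $J \geq 1$ and assume $\restriction{\partial_{n}^{m} c}{x_{n}=0} \equiv 0$ for all $1 \leq m \leq J$. I would first argue that $B := \Lambda_{cg} - \Lambda_{g}$ is a classical pseudodifferential operator on $\partial M$ of order at most $-J$, with principal symbol computable from the inductive hypothesis. The order-$1$ homogeneous term of the difference vanishes since $\restriction{c}{\partial M} \equiv 1$ forces the boundary metrics to agree, hence the principal symbols of the DN maps coincide. The subprincipal term vanishes by \textbf{Proposition \ref{Prop III.12}} together with $\restriction{\partial_{\nu}c}{\partial M} \equiv 0$ (the $j=1$ case). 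For each intermediate order $-i$ with $1 \leq i \leq J-1$, an application of \textbf{Lemma \ref{lem 5.1}} at index $i$ gives
$$\widetilde{a}_{-i} - a_{-i} = \frac{n-2}{4}\left(-\frac{1}{2 \lvert \xi' \rvert_{g^{\circ}}}\right)^{i} \restriction{\partial_{n}^{i+1} c}{x_{n}=0} = 0,$$
because $i+1 \leq J$ falls within the inductive hypothesis. A final application of the lemma at index $J$ then yields the principal symbol
$$\sigma_{B}(x',\xi') = \frac{n-2}{4}\left(-\frac{1}{2\lvert \xi' \rvert_{g^{\circ}}}\right)^{J} \restriction{\partial_{n}^{J+1} c}{x_{n}=0}(x'),$$
and crucially the prefactor $(n-2)/4$ is non-zero since $n \geq 3$.

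I would then invoke the trace formula. Steklov isospectrality forces $\mathrm{Tr}\!\left(e^{it\Lambda_{cg}} - e^{it\Lambda_{g}}\right) \equiv 0$ as a tempered distribution on $\mathbb{R}$, so \textbf{Theorem \ref{Th IV.2}} applied with $A = \Lambda_{g}$ and the $B$ above makes every principal wave-trace coefficient $c_{T,-1}$ vanish. The Anosov plus simple length spectrum assumption ensures each period $T$ contributes a single non-degenerate closed bicharacteristic $\gamma$ with Morse index zero (\textbf{Remark \ref{Rq II.9}}), so
$$0 \;=\; c_{T,-1} \;=\; \frac{\lvert T^{\sharp} \rvert \, e^{-iT\overline{\gamma}}}{\lvert \det(\mathrm{Id} - \mathcal{P}_{\gamma})\rvert^{1/2}} \cdot i \int_{\gamma} \sigma_{B}$$
forces $\int_{\gamma} \sigma_{B} = 0$ on every closed bicharacteristic. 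Since these project to unit-speed closed geodesics of $g^{\circ}$ and $\sigma_{B}$ restricted to $S^{*}\partial M$ is a non-zero constant multiple of the pullback $\pi^{*}\!\left(\restriction{\partial_{n}^{J+1} c}{x_{n}=0}\right)$, this vanishing is exactly $\mathcal{I}_{0}^{g^{\circ}}\!\left(\restriction{\partial_{n}^{J+1} c}{x_{n}=0}\right) \equiv 0$. Applying \textbf{Theorem \ref{Th II.5}} concludes $\restriction{\partial_{n}^{J+1} c}{x_{n}=0} \equiv 0$, closing the induction. The final Taylor series statement follows since $\restriction{c}{\partial M} \equiv 1$ automatically kills all tangential derivatives of $c$ at any $p \in \partial M$.

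The main obstacle is executing cleanly the cascade of symbolic cancellations that identifies $B$ as classical of order exactly $-J$ with the claimed principal symbol, which requires killing every intermediate homogeneous component through a careful bookkeeping of \textbf{Lemma \ref{lem 5.1}} before each step of the induction can proceed. By contrast with the base case (which required the non-Abelian Liv\v sic machinery of \textbf{Proposition \ref{Prop III.14}} and \textbf{Theorem \ref{Th III.17}} to circumvent the $2\pi\mathbb{Z}$ ambiguity coming from the exponentiated phase in the subprincipal wave invariant), for $j \geq 2$ the principal symbol of the negative-order operator $B$ is a genuine function on $S^{*}\partial M$ rather than a phase, so the X-ray information is obtained directly and no Liv\v sic-type lift is needed.
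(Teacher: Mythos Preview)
Your proof is correct and follows essentially the same inductive scheme as the paper: the base case is \textbf{Theorem \ref{Th III.18}}, and the inductive step combines \textbf{Lemma \ref{lem 5.1}} with \textbf{Theorem \ref{Th IV.2}} and the injectivity of $\mathcal{I}_{0}^{g^{\circ}}$ to kill each successive normal derivative. Your concluding argument for the Taylor series (directly observing that all mixed derivatives vanish since $\restriction{c}{\partial M}\equiv 1$) is slightly more elementary than the paper's route via \cite[Proposition 1.3]{LU}, but both are valid.
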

\begin{proof}
Let us consider $x':=(x_{1},...x_{n-1})$ any local coordinate system on the boundary and $(x',  x_{n})$ the associated $g$-boundary normal coordinates near $\partial M$.
\\We proceed by induction on the order of derivation $j\geq 1$ : 
\vspace{0.3cm}
\begin{itemize}
    \item[$\bullet$] $j=1$ :  By \textbf{Theorem \ref{Th III.18}}, $$\restriction{\partial_{n}c}{x_{n}= 0} = 0$$ since $(\partial M,g^{\circ})$ is Anosov with simple length spectrum.
    
    \vspace{0.2cm}
    
    \item[$\bullet$] Let $j\geq 1$ s.t for all $1\leq m\leq j$, 
    $$\restriction{\partial_{n}^{m}c}{x_{n}= 0}=0\hspace{0.1cm}.$$ 
Then using \textbf{Lemma \ref{lem 5.1}}, 
    $$\Lambda_{cg}-\Lambda_{g}\in\Psi^{-j}(\partial M)\hspace{0.1cm}.$$
Now we can apply \textbf{Theorem \ref{Th IV.2}} to $(N,h)=(\partial M,g^{\circ})$,
$$A=\Lambda_{g}\in\Psi^{1}(\partial M)\hspace{0.2cm}\text{and}\hspace{0.2cm} B=\Lambda_{cg}-\Lambda_{g}\in\Psi^{-j}(\partial M)\hspace{0.2cm}:$$ 
Recalling that 
    \vspace{0.2cm}
    \begin{itemize}
        \item[-] $\mathrm{spec}(\Lambda_{cg})=\mathrm{spec}(\Lambda_{g})\hspace{0.1cm}.$
        \vspace{0.1cm}
        \item[-] $\restriction{c}{\partial M} \equiv 1\hspace{0.1cm}.$
        \vspace{0.1cm}
        \item[-] $(\partial M,g^{\circ})$ is Anosov with simple length spectrum (which implies, in \textbf{Theorem \ref{Th IV.2}}, $r=1$ and the non-degeneracy of closed geodesics as we pointed out in Section \ref{Subsection II.2}).
    \end{itemize} 
    
    \medskip\noindent we obtain that the leading term in the expansion of $S(t)$ is zero, and then using \textbf{Theorem \ref{Th IV.2}}, we get that for each periodic bicharacteristic $s\mapsto\Phi_{s}^{A}\left(x,\xi\right)$, $$0=\int_{0}^{\ell}\sigma_{B}\left(\Phi_{s}^{A}\left(x,\xi\right)\right)ds\hspace{0.1cm},$$ 

\medskip\noindent where $\ell$ obviously denotes the length of the bicharacteristic. Given the expression for the principal symbol of $B\in\Psi^{-j}(\partial M)$ in \textbf{Lemma \ref{lem 5.1}}, we conclude as usual by injectivity of the geodesic $X$-ray transform on functions (\textbf{Theorem \ref{Th II.5}}), that 
    $$\restriction{\partial_{n}^{j+1}c}{x_{n}= 0}=0\hspace{0.1cm}.$$
\end{itemize}
\vspace{0.2cm}
It is then easy to see that the Taylor series of the metrics $g$ and $cg$ at any $p\in\partial M$ are identical in $g$-boundary normal coordinates $(x',x_{n})$ near this point :  
\vspace{0.1cm}
\\By simply using \textbf{Remark \ref{rq 5.1}} and \cite[Proposition 1.3]{LU}, we know that the knowledge of the full symbols of $\Lambda_{cg}$ and $\Lambda_{g}$ determine, in $g$-boundary normal coordinates the Taylor series of $cg$ and $g$ respectively. \\But with the same notations as in \textbf{Lemma \ref{lem 5.1}}, we have just shown that in these coordinates,
\begin{equation}
\widetilde{a}_{-j}-a_{-j}=0\hspace{0.2cm},\hspace{0.2cm}\forall j\geq -1\hspace{0.2cm}.\label{eq6}    
\end{equation}

\medskip\noindent Taylor series of $c$ at $p$ are then reduced to the constant function identically equal to $1$ in any local coordinates since the nullity of derivatives is obviously an invariant property under coordinate changes.
\end{proof}

\begin{Rem}
Note that \eqref{eq6} is equivalent to saying that $\Lambda_{cg}$ and $\Lambda_{g}$ coincide modulo a smoothing operator.     
\end{Rem}

\medskip\noindent By a simple argument of analytic continuation on the function $c$, we then obtain that if $c$ is real-analytic and $M$ is connected, then $c\equiv 1$ on $M$.

\section{Further results and remarks}\label{Section V}

Here we discuss other similar inverse problems which the author believes could be interesting to study in the line of this paper. In particular, this section establishes \textbf{Theorem \ref{Th I.6}} using the method developed in Section \ref{Section IV}. The result deals with the Steklov spectral inverse problem of recovering a potential $q$ from the Steklov spectrum of a smooth compact Riemannian manifold $M$ of dimension $n\geq 3$ with boundary. 
\\This inverse problem involves the Schrödinger operator $\mathcal{L}_{g,q}:=-\Delta_{g}+q$ where $q$ is a smooth real-valued function on $M$ : It is well known that if $0$ is not a Dirichlet eigenvalue of $\mathcal{L}_{g,q}$ in $M$, then the Dirichlet problem 
$$\left\{\begin{aligned}
 \mathcal{L}_{g,q}v &= 0 \\
\restriction{v}{\partial M} &= f \\
\end{aligned}\right.$$
has a unique solution in $H^{1}(M)$ for any $f\in H^{1/2}(\partial M)$ and we can define the associated DN map 
$$\Lambda_{g,q}: f\mapsto\restriction{\partial_{\nu}u}{\partial M}\hspace{0.1cm}.$$

\medskip\noindent We refer, for example, to \cite{Sal} for details. In the same way as in the case of the standard Steklov operator $\Lambda_{g}$ presented in the Introduction, $\Lambda_{g,q}$ is a self-adjoint elliptic and classical (in the sense of $\left(\ref{eq1}\right)$)  pseudodifferential operator of order $1$ on the closed manifold $\partial M$ (see, for example, \cite{DDSF} or \cite{Cek}).
\\In particular, its spectrum is discrete and is given by a sequence of eigenvalues
$$\sigma_{0}<\sigma_{1}\leq\sigma_{2}\leq...\rightarrow \infty\hspace{0.1cm}.$$
Using the expressions for the full symbols of $\Lambda_{g,q}$ in boundary normal coordinates, computed in \cite[Lemma 8.6]{DDSF},  we can easily show a result analogous to \textbf{Lemma \ref{lem 5.1}} for this problem :

\begin{lemme}
Let $q_{1}$ and $q_{2}$ two smooth potentials on $M$.
Let $$\sigma^{full}_{\Lambda_{g,q_{2}}}\sim \sum_{j=-1}^{+\infty}\widetilde{a}_{-j}$$
and $$\sigma^{full}_{\Lambda_{g,q_{1}}}\sim \sum_{j=-1}^{+\infty}a_{-j}$$ 

\medskip\noindent be respectively the full symbols of $\Lambda_{g,q_{2}}$ and $\Lambda_{g,q_{1}}$ considered in any local coordinate system 
$x':=(x_{1},...x_{n-1})$ on the boundary. 

\medskip\noindent In the $g$-boundary normal coordinates $(x',  x_{n})$, 

$$\widetilde{a}_{1}-a_{1}=0=\widetilde{a}_{0}-a_{0}$$

\medskip\noindent and
$$\widetilde{a}_{-1}-a_{-1}=\restriction{\left(q_{2}-q_{1}\right)}{x_{n}= 0}\hspace{0.1cm}.$$

\medskip\noindent Furthermore, for all $j\geq 2$, if 

$$\restriction{\partial_{n}^{m-2}\left(q_{2}-q_{1}\right)}{x_{n}= 0}\hspace{0.1cm},\hspace{0.1cm}\forall j\geq m\geq 2\hspace{0.1cm},$$

\medskip\noindent then in the $g$-boundary normal coordinates $(x',  x_{n})$ :
$$\widetilde{a}_{-j}-a_{-j}=\left(-\frac{1}{2\lvert\xi'\rvert_{g^{\circ}}}\right)^{j}\restriction{\partial_{n}^{j-1}\left(q_{2}-q_{1}\right)}{x_{n}= 0}\hspace{0.1cm}.$$
\end{lemme}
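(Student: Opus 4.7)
The plan is to invoke the explicit recursive formulas for the full symbol of $\Lambda_{g,q}$ in $g$-boundary normal coordinates given in \cite[Lemma 8.6]{DDSF}. These express each homogeneous term $a_{-j}$ as a universal polynomial in $|\xi'|_{g^\circ}^{-1}$ whose coefficients are boundary jets of the metric coefficients $(g_{\alpha\beta}, g^{\alpha\beta})$ and of the potential $q$. Since both DN maps are constructed with the same metric $g$, every contribution that depends solely on $g$ is identical for $\Lambda_{g,q_1}$ and $\Lambda_{g,q_2}$; thus, in forming the difference $\widetilde{a}_{-j} - a_{-j}$, only $q$-dependent pieces survive, and the problem reduces to isolating these.

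For the first identity, the principal symbol of $\Lambda_{g,q}$ is $|\xi'|_{g^\circ}$ and the subprincipal symbol agrees with $\text{sub}(\Lambda_g)$, both being determined entirely by the principal and subprincipal parts of $-\Delta_g$, while $q$ enters only as a zeroth-order perturbation and therefore first contributes to the symbol calculus at order $-1$. This gives $\widetilde{a}_1 = a_1$ and $\widetilde{a}_0 = a_0$ directly from \cite[Lemma 8.6]{DDSF}.

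For the order $-1$ term, the DDSF formula shows that $a_{-1}$ decomposes as a sum of a purely metric piece and a scalar piece equal (after simplification) to $q|_{x_n = 0}$, coming from the factorization $-\Delta_g + q = (\partial_n + E)(\partial_n - E) + (\text{remainder})$ where the zeroth-order term $q$ balances with the remainder at the first recursive step. Subtraction cancels the metric piece and yields $\widetilde{a}_{-1} - a_{-1} = (q_2 - q_1)|_{x_n=0}$.

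For $j \geq 2$, I would proceed by induction using the recursive structure in \cite[Lemma 8.6]{DDSF}. Under the boundary agreement hypothesis $q_1|_{\partial M} \equiv q_2|_{\partial M}$, the case $j = 1$ above implies that the difference $\Lambda_{g,q_2} - \Lambda_{g,q_1}$ is of order $-2$, and the induction hypothesis at stage $j$ asserts that $\widetilde{a}_{-m} - a_{-m}$ has the stated form for all $2 \leq m < j$. Each step of the recursion inverts the operator $\partial_n + |\xi'|_{g^\circ}$ coming from the principal symbol, which contributes the factor $-1/(2|\xi'|_{g^\circ})$, and produces exactly one additional normal derivative of $q$; the other terms involve boundary jets of $q_1 - q_2$ and of $g$ of strictly lower order, which cancel by the induction hypothesis. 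The leading surviving contribution is precisely $\bigl(-1/(2|\xi'|_{g^\circ})\bigr)^{j} \partial_n^{j-1}(q_2 - q_1)|_{x_n=0}$. The main obstacle is purely combinatorial bookkeeping: isolating this leading $q$-dependent term from the many metric-only terms that cancel at each recursion step, exactly as in the proof of \textbf{Lemma \ref{lem 5.1}}, and verifying that no unwanted lower-order $q$-contributions leak into the leading symbol of the difference.
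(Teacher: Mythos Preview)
Your approach is essentially the same as the paper's: invoke the explicit symbol recursion from \cite[Lemma 8.6]{DDSF}, observe that all metric-dependent terms are identical for $\Lambda_{g,q_1}$ and $\Lambda_{g,q_2}$ and therefore cancel in the difference, and read off the surviving $q$-dependent contributions. The paper in fact gives no detailed proof at all, only a two-sentence remark that the argument is easier than that of \textbf{Lemma~\ref{lem 5.1}} precisely because the metric is fixed, together with a pointer to \textbf{Proposition~\ref{Prop B.3}} for the order $-1$ identity; your sketch is more explicit than what the paper provides and follows the same line.

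One small caution: your induction is phrased as ``assume $\widetilde a_{-m}-a_{-m}$ has the stated form for $2\le m<j$'', but what you actually need in order to control the cross-terms in the DDSF recursion is the vanishing (or explicit form) of the lower-order \emph{symbol differences} $\widetilde a_{-m}-a_{-m}$, not of the normal derivatives of $q_2-q_1$ themselves. Since under the hypothesis $q_1|_{\partial M}=q_2|_{\partial M}$ the first nonzero difference occurs at order $-2$, the quadratic terms in the recursion (products of two symbol differences) first contribute well below the order you are computing, so the linear part of the recursion indeed dominates and yields the stated formula. This is the bookkeeping you flag as the ``main obstacle''; it goes through without incident, exactly as in \textbf{Lemma~\ref{lem 5.1}}.
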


\medskip\noindent Note that the proof of this result is actually easier than \textbf{Lemma \ref{lem 5.1}} since all the terms involving the metric are identical for both operators. Note also that the identity at homogeneity order $-1$ can be seen directly here via the expression of the principal symbol given in \textbf{Proposition \ref{Prop B.3}}.
\\Now, using the same arguments developed in Section \ref{Section IV}, we can show a result similar to \textbf{Theorem \ref{Th I.1}}, namely that the Steklov spectrum determines the potential in the following sense :

\begin{Theorem}\label{Th V.2}
    Assume that $(\partial M,g^{\circ})$ is Anosov with simple length spectrum. Let $q_{1}$ and $q_{2}$ two smooth potentials on $M$.
\\If the Steklov spectra agree
$$\mathrm{spec}(\Lambda_{g,q_{1}})=\mathrm{spec}(\Lambda_{g,q_{2}})\hspace{0.1cm},$$ 
then the boundary normal derivatives at any order coincide :
$$\restriction{\partial_{\nu}^{j-1}\left(q_{2}-q_{1}\right)}{\partial M}=0\hspace{0.1cm},\hspace{0.1cm}\forall j\geq 1\hspace{0.1cm}.$$
Consequently, the Taylor series of $q_{1}$ and $q_{2}$ at each point $p\in\partial M$ in any local coordinates are equal and 
    $$\Lambda_{g,q_{1}}= \Lambda_{g,q_{2}}$$ 
modulo a smoothing operator.
    
\end{Theorem}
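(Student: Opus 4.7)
The plan is to transplant the recursive procedure of Section \ref{Section IV} verbatim, with $q_{2}-q_{1}$ and its normal derivatives playing the role that the normal derivatives of the conformal factor play in \textbf{Theorem \ref{Th IV.6}}. The crucial simplification compared to the conformal setting is that, by the preceding lemma, one has $\widetilde{a}_{1}-a_{1}=\widetilde{a}_{0}-a_{0}=0$ unconditionally: the potential affects the symbolic expansion of $\Lambda_{g,q}$ only from order $-1$ downward. In particular there is no subprincipal obstruction comparable to $\partial_{\nu}c|_{\partial M}$, and \emph{no} Liv\v sic-type argument (hence none of the Section \ref{Section III} machinery) is needed; already the base step of the induction is handled directly by \textbf{Theorem \ref{Th IV.2}} combined with injectivity of the X-ray transform on functions (\textbf{Theorem \ref{Th II.5}}).

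For the base case, set $B:=\Lambda_{g,q_{2}}-\Lambda_{g,q_{1}}$. The lemma gives $B\in\Psi^{-1}(\partial M)$ with principal symbol (in $g$-boundary normal coordinates) equal to a scalar function of $|\xi'|_{g^{\circ}}$ times $(q_{2}-q_{1})|_{\partial M}$. I would apply \textbf{Theorem \ref{Th IV.2}} with $A=\Lambda_{g,q_{1}}$ and $k=1$; this is legitimate since $A$ is a positive self-adjoint classical elliptic first-order pseudodifferential operator whose principal symbol $|\xi'|_{g^{\circ}}$ coincides with that of $\Lambda_{g}$, so its bicharacteristic flow on $S^{*}\partial M$ is the cogeodesic flow of $(\partial M,g^{\circ})$, to which the Anosov and simple-length-spectrum hypotheses apply. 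Isospectrality gives $Tr\left(U(t)\right)=Tr\left(U_{0}(t)\right)$ as tempered distributions, so the distribution $S(t)$ of \textbf{Theorem \ref{Th IV.2}} vanishes identically, and in particular every principal wave trace invariant $c_{T,-1}$ vanishes. Simple length spectrum forces $r=1$ in the expression for $c_{T,-1}$, while the Anosov property yields non-degeneracy and vanishing Morse index (\textbf{Remark \ref{Rq II.9}}); the identity $c_{T,-1}=0$ therefore reduces to $\int_{\gamma}\sigma_{B}=0$ for every closed geodesic $\gamma$ of $g^{\circ}$. Restricting to $S^{*}\partial M$, where $|\xi'|_{g^{\circ}}=1$ is constant along bicharacteristics, this is exactly $\mathcal{I}_{0}^{g^{\circ}}\left((q_{2}-q_{1})|_{\partial M}\right)=0$, and \textbf{Theorem \ref{Th II.5}} gives $(q_{2}-q_{1})|_{\partial M}=0$.

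The inductive step mirrors that of \textbf{Theorem \ref{Th IV.6}}. Assuming $\partial_{n}^{m}(q_{2}-q_{1})|_{x_{n}=0}=0$ for all $0\leq m\leq j-2$, the lemma upgrades $B$ to an element of $\Psi^{-j}(\partial M)$ with principal symbol $\left(-\tfrac{1}{2|\xi'|_{g^{\circ}}}\right)^{j}\partial_{n}^{j-1}(q_{2}-q_{1})|_{\partial M}$; rerunning \textbf{Theorem \ref{Th IV.2}} with $k=j$ kills $c_{T,-1}$ exactly as above, and \textbf{Theorem \ref{Th II.5}} then yields $\partial_{n}^{j-1}(q_{2}-q_{1})|_{\partial M}=0$. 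Iterating, every Taylor coefficient of $q_{1}$ and $q_{2}$ at each point of $\partial M$ agrees in $g$-boundary normal coordinates; feeding this back into the lemma gives $\widetilde{a}_{-j}-a_{-j}=0$ for every $j$, which is precisely the smoothing-equivalence $\Lambda_{g,q_{1}}\equiv\Lambda_{g,q_{2}}$. The only point requiring verification beyond the conformal proof is the one highlighted above, namely that \textbf{Theorem \ref{Th IV.2}} applies with $A=\Lambda_{g,q_{1}}$ in place of $\Lambda_{g}$; this is immediate since the principal symbol, and hence the underlying Hamiltonian flow on $S^{*}\partial M$, is unaffected by the potential.
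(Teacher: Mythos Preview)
Your proof is correct and follows essentially the same approach as the paper: an induction on the order of vanishing, using \textbf{Theorem \ref{Th IV.2}} with $A=\Lambda_{g,q_{1}}$ and $B=\Lambda_{g,q_{2}}-\Lambda_{g,q_{1}}$ at each step, together with the injectivity of $\mathcal{I}_{0}^{g^{\circ}}$. Your explicit remarks that the bicharacteristic flow of $A$ is the cogeodesic flow of $g^{\circ}$ (since the principal symbol is independent of $q$) and that no Liv\v sic argument is needed for the base case are exactly the points the paper makes, the latter in the remark immediately following the proof.
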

\begin{proof}
The proof follows exactly the same ideas as \textbf{Theorem \ref{Th IV.6}} : We proceed by induction on the order of derivation $j\geq 0$ and apply at each step the trace formula \textbf{Theorem \ref{Th IV.2}} to 
$$(N,h)=(\partial M,g^{\circ})\hspace{0.2cm},\hspace{0.2cm}A=\Lambda_{g,q_{1}}\in\Psi^{1}(\partial M)\hspace{0.2cm}\text{,}\hspace{0.2cm} B=\Lambda_{g,q_{2}}-\Lambda_{g,q_{1}}\in\Psi^{-j-1}(\partial M)$$ 
which, combined with the isospectrality assumption, gives us for any periodic bicharacteristic $s\mapsto\Phi_{s}^{A}\left(x,\xi\right)$, 
$$0=\int_{0}^{\ell}\sigma_{B}\left(\Phi_{s}^{A}\left(x,\xi\right)\right)ds\hspace{0.1cm},$$ 

\medskip\noindent where $\ell$ denotes the length of the bicharacteristic.

\medskip\noindent Then combining the expression for the principal symbol of $B\in\Psi^{-j-1}(\partial M)$ given by the previous lemma with the injectivity of the X-ray transform (\textbf{Theorem \ref{Th II.5}}), we obtain in the $g$-boundary normal coordinates $(x',x_{n})$,
$$\restriction{\partial_{n}^{j}\left(q_{2}-q_{1}\right)}{x_{n}=0}=0\hspace{0.1cm},$$

\medskip\noindent equality that is again completely invariant under coordinate changes, of course.
\\Using the notations from the previous lemma, we have just shown that,
$$\widetilde{a}_{-j}-a_{-j}=0\hspace{0.2cm},\hspace{0.2cm}\forall j\geq -1$$
or equivalently that $\Lambda_{g,q_{1}}$ and $\Lambda_{g,q_{2}}$ coincide modulo a smoothing operator. \\To conclude, we can then simply use the calculations of \cite[Section 8]{DDSF} or \cite[Section 3.2]{Cek} which show that in $g$-boundary normal coordinates the Taylor series of $q_{1}$ and $q_{2}$ at any point $p\in\partial M$ are determined by the knowledge of the full symbols of $\Lambda_{g,q_{1}}$ and $\Lambda_{g,q_{2}}$ respectively.
\end{proof}

\begin{Rk}
Note that initialization here is much simpler, since it is obtained directly by the same argument as at higher orders, unlike the case treated in this paper. In the latter case, initialization consisted in showing  that the normal derivative of the conformal factor at the boundary is zero, but this quantity appears at the subprincipal level (via the trace formula \textbf{Theorem \ref{Th II.3}}) and then is not directly covered by our method.  For this reason, this non-linear case was also treated separately in Section \ref{Section III}; the result was stated in \textbf{Theorem \ref{Th III.18}}.
\end{Rk}

To complete the above remark, let us note that this problem is actually made completely linear by the fact that the operators $\Lambda_{g,q_{1}}$ and $\Lambda_{g,q_{2}}$ already coincide at principal and subprincipal order and that the information on the potentials is given from order $-1$, which allows us to apply \textbf{Theorem \ref{Th IV.2}} directly. In this sense, the problem is simpler than the case treated in this paper, for which obtaining information in the principal order is a non-linear problem and requires the addition of relatively strong geometric assumptions as discussed in Appendix \ref{App A}. \\Finally, let us note that just as \textbf{Theorem \ref{Th IV.2}} is an analogue of \cite[Theorem 4]{Gui}, its application \textbf{Theorem \ref{Th V.2}} can be compared with \cite[Theorem 2]{GK} on the recovery of a potential $q$ from the spectrum of the associated Schrödinger operator. Moreover, it seems worth mentioning here that \textbf{Theorem \ref{Th IV.2}} also gives us in particular, and this is actually equivalent to \textbf{Theorem \ref{Th V.2}}, an analogue of \cite[Theorem 3(a)]{GK}, namely :

\begin{Theorem}
Let $(M,g)$ be a compact smooth Riemannian manifold with boundary and $q$ be a smooth potential on $M$. Assume that $(\partial M,g^{\circ})$ is Anosov with simple length spectrum. Then the spectrum of $\Lambda_{g,q}$ determines the integrals of $q$ and all its normal derivatives at the boundary over the periodic geodesics of $g^{\circ}$.
\end{Theorem}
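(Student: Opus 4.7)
The plan is to extract each integral as a spectral invariant via an iterated application of the adapted trace formula \textbf{Theorem \ref{Th IV.2}}, mirroring the inductive structure of the proof of \textbf{Theorem \ref{Th V.2}}. Since $(M,g)$ is fixed, the operator $\Lambda_{g,0}$ is known and its spectrum is computable, so the spectrum of $\Lambda_{g,q}$ determines the tempered distribution $S(t) := Tr\bigl(e^{it\Lambda_{g,q}} - e^{it\Lambda_{g,0}}\bigr)$, which will be the main quantity to analyze.

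First I would take $A = \Lambda_{g,0}$ and $B = \Lambda_{g,q} - \Lambda_{g,0}$; by the preceding lemma, $B \in \Psi^{-1}(\partial M)$ with principal symbol a known multiple of $q|_{\partial M}$. Under the Anosov assumption with simple length spectrum, each period $T$ corresponds to a unique non-degenerate closed geodesic $\gamma$, and \textbf{Theorem \ref{Th IV.2}} identifies the coefficient of the leading singularity of $S(t)$ at $t=T$ as
\[
c_{T,-1} = \frac{|T^{\sharp}|\, e^{-iT\,\overline{\gamma}}}{|\det(I-\mathcal{P}_{\gamma})|^{1/2}} \cdot i \int_{\gamma}\sigma_{B},
\]
in which every factor outside the integral depends only on $g^{\circ}$. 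Thus the spectrum isolates $\int_{\gamma} q|_{\partial M}$. By injectivity of the X-ray transform on functions (\textbf{Theorem \ref{Th II.5}}), $q|_{\partial M}$ is then known pointwise. I would next build, from the formulas underlying the preceding lemma, a self-adjoint classical operator $A_{1}\in \Psi_{cl}^{1}(\partial M)$ whose full symbol agrees with that of $\Lambda_{g,q}$ down to order $-1$, using only $g$ and $q|_{\partial M}$. Then $\Lambda_{g,q} - A_{1} \in \Psi^{-2}(\partial M)$ has principal symbol $\bigl(-\tfrac{1}{2|\xi'|_{g^{\circ}}}\bigr)^{2} \partial_{n} q|_{\partial M}$, and since $A_{1}$ and $\Lambda_{g,0}$ share their principal symbol, the Hamiltonian flow of $A_{1}$ is still the geodesic flow on $S^{*}\partial M$, so the hypotheses of \textbf{Theorem \ref{Th IV.2}} remain in force. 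Re-applying it to the pair $(A_{1},\Lambda_{g,q}-A_{1})$ extracts $\int_{\gamma}\partial_{n}q|_{\partial M}$ from the spectrum. Iterating, at step $j$ one constructs $A_{j}$ correcting the symbol of $\Lambda_{g,q}$ down to order $-j$ using $g$ and the previously recovered boundary jets $\partial_{n}^{m}q|_{\partial M}$ for $m < j$, and the principal wave invariant of $\Lambda_{g,q} - A_{j} \in \Psi^{-j-1}(\partial M)$ isolates $\int_{\gamma}\partial_{n}^{j}q|_{\partial M}$.

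The main delicate point is ensuring that the reference trace $Tr(e^{itA_{j}})$ is accessible from the available data at each stage: this forces an explicit recursive construction of $A_{j}$ from $g$ and the already-recovered boundary jets, so that its wave trace can be computed alongside. A conceptually simpler, though less constructive, route would be to invoke the equivalence with \textbf{Theorem \ref{Th V.2}} directly: since two Steklov isospectral potentials share their full boundary Taylor series pointwise in $g$-boundary normal coordinates, the functions $\partial_{n}^{j}q|_{\partial M}$ are themselves spectral invariants, hence a fortiori so are all their X-ray transforms over closed geodesics of $g^{\circ}$. The present statement is then a \emph{Liv\v sic}-type repackaging of \textbf{Theorem \ref{Th V.2}} in the spirit of \cite[Theorem 3(a)]{GK}.
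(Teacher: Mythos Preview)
Your proposal is correct, and your second route—deducing the result from the equivalence with \textbf{Theorem \ref{Th V.2}}—is exactly what the paper does: it states this theorem as an immediate reformulation of \textbf{Theorem \ref{Th V.2}} (noting the analogy with \cite[Theorem 3(a)]{GK}) and gives no separate argument. Your first, more constructive route via an iterated application of \textbf{Theorem \ref{Th IV.2}} with successively corrected reference operators $A_j$ is also valid and makes the spectral determination explicit; the delicate point you flag (that $Tr(e^{itA_j})$ must be computable from the already-recovered data) is genuine but harmless, since each $A_j$ is built from $g$ and the previously extracted boundary jets, and its subprincipal symbol coincides with that of $\Lambda_{g,0}$ by the lemma preceding \textbf{Theorem \ref{Th V.2}}.
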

 
A natural question raised by \textbf{Theorem \ref{Th V.2}} is whether we can explicitly construct an example of Steklov isospectral potentials which are not the same, obviously when the simple length spectrum condition is not verified.
\\In the case of the Schrödinger operator $\mathcal{L}_{g,q}=-\Delta_{g}+q$, such examples are shown in \cite{Br} via the general method of Sunada \cite{Sun}, which remains probably the most widely used and cited technique to construct isospectral manifolds. \\We refer to \cite{Gord} for details and more specifically to \cite{GordWebb} for the Steklov spectrum.
To the author's knowledge, there is no reference for the operator $\Lambda_{g,q}$, but it may be known in folklore that the Sunada construction can be adapted again.
\vspace{0.2cm}

In another direction, recall that in this paper we crucially exploit the fact that the Steklov operator has a non-zero subprincipal symbol which enabled us to use the additional information provided by the principal invariant of the wave trace \textbf{Theorem \ref{Th II.3}}. \\In this way, in \textbf{Theorem \ref{Th III.18}}, we have recovered the normal derivative of the conformal factor at the boundary which appears as an attenuation factor in a transport-type equation. (see \textbf{Theorem \ref{Th III.17}})
\\To continue in this spirit, we can try to exhibit inverse spectral problems for which the operator has a non-zero subprincipal symbol and which reveal relevant information but more complicated in nature than a simple function, for example a $1$-form.
\\One can think of the magnetic Laplacian (which is a first order perturbation of the usual Laplacian) or, in the case of the Steklov inverse problem, the magnetic Dirichlet-to-Neumann map $\Lambda_{g,A,q}$, which roughly speaking consists of adding a magnetic potential i.e a smooth $1$-form $A$ on $M$. More precisely, this is the Steklov operator associated with the Dirichlet problem for the Schrödinger operator 
$$\mathcal{L}_{g,A,q}:=d_{A}^{*}d_{A}+q\hspace{0.1cm},$$ 
where $d_{A}:=d+iA$ denotes the \textit{magnetic differential} and $d_{A}^{*}:=d^{*}-i\langle A^{\sharp}, \cdot
\rangle$ its formal adjoint (see, for example, \cite{DDSF} or the recent \cite{ChaGitHabPey} for details).
\\We hope to discuss these cases in a forthcoming article, generalizing \textbf{Theorem \ref{Th III.17}} by the same token.
Indeed, Paternain's argument \cite{Pat} using Gysin sequences still works to get a satisfactory statement but a new formalism in the spirit of Pestov identity  can be introduced to handle this equation.
\vspace{0.3cm}

In the problems discussed above in this section, the metric is fixed and the aim is to recover a potential. \\But we can also try to generalize the results obtained in this paper in the case where the metrics are no longer in the same conformal class.
\\This problem obviously poses significant technical issues, since we are no longer reasoning about a function (the conformal factor) but directly about the metric tensor, and it is not clear that we can obtain satisfactory results in the non-linear case (when isospectral deformations are not considered).
\\More precisely, consider two metrics $g_{1}$ and $g_{2}$ and assume that $(\partial M,g_{i}^{\circ})$, $i\in\lbrace 1,2\rbrace$ is Anosov with simple length spectrum. Then if the Steklov spectrum is preserved
$$\mathrm{spec}(\Lambda_{g_{1}})=\mathrm{spec}(\Lambda_{g_{2}})\hspace{0.1cm},$$
can we show that the Taylor series of $g_{1}$ and $g_{2}$ in boundary normal coordinates are equal at each point on the boundary and that $\Lambda_{g_{1}}$ and $\Lambda_{g_{2}}$ coincide modulo a smoothing operator ? 
\\Of course, the answer is a priori no, at least without further assumptions.
\\Indeed, as in the case of conformal metrics, information at principal order poses a significant problem, and therefore it seems necessary here to assume at least that the two metrics are isometric at the boundary, i.e. there is a diffeomorphism $\varphi:\partial M\rightarrow\partial M$ such that
$$g_{2}^{\circ}=\varphi^{*}g_{1}^{\circ}\hspace{0.1cm}.$$

\medskip\noindent We can also note that, at subprincipal order, the relevant quantity that appears is the second fundamental form of the boundary, since for any metric $g$, 

$$sub(\Lambda_{g})\left(x,\xi\right)=\dfrac{1}{2}\left(\dfrac{\mathcal{L}_{g}(\xi,\xi)}{\lvert\xi\rvert^{2}}-H_{g}\right)\hspace{0.1cm},$$

\medskip\noindent where $\mathcal{L}_{g}(\xi,\xi)$ is the second fundamental form of $\partial M$ in $(M,g)$ and $H_{g}$ its trace, also called \textit{mean curvature} of $M$. We refer to \cite[Chapter 12, Proposition C.1]{Tay} for this standard result.\\ Then we can try to mimick the conformal case and see if we are able to get usable information as well and obtain similar results. In addition, the problem would no longer require the injectivity of the X-ray transform on functions, but rather an injectivity property for the X-ray transform on symmetric tensors of order $2$ which, for closed Anosov manifolds, is only known in dimension $2$ (see \cite[Theorem 1.1]{PatSalUhl2} or more generally \cite[Theorem 1.4]{Guillarmou} for tensors of any order), or in any dimension but with non-positive sectional curvature (see \cite[Theorem 1.3]{CrSh}). However, we should mention here that the X-ray transform of symmetric tensors of arbitrary degree is generically injective on closed Anosov manifolds of dimension $n\geq 3$, as shown very recently by Ceki{\'c} and Lefeuvre in \cite{CekiLef}.
\vspace{0.2cm}

To conclude this paper, note that we may also be interested in establishing stability estimates to complement the results of this paper, even if the techniques used here appear to be non-quantitative. In another direction it would also be interesting to try to generalize some known compactness results for sets of Laplace isospectral metrics, in particular \cite[Theorem 2]{BrPeYa} in the case of conformal metrics, to the Steklov spectrum, this could be done by exploiting heat invariants if possible.

\appendix

\section{A few results on the principal order}\label{App A}

Here we establish some simple results on the spectral inverse problem at principal order ; more precisely we give examples where the Steklov isospectrality of two conformal metrics $g$ and $cg$ implies their equality at the boundary i.e 
$$\restriction{c}{\partial M} \equiv 1\hspace{0.1cm}.$$
As mentioned in this paper, this highly non-linear problem cannot be treated with the techniques used here and it is of course not true in general just as the analogue for the Laplacian is not. \\Consequently, we are only able to show this result for the linear problem (i.e considering isospectral deformations) and in the non-linear case under much stronger geometrical assumptions due to the lack of exploitable spectral information.
Indeed, for the latter, we only exploit the boundary volume as a spectral invariant, given by the well-known Weyl's law (see \textbf{Proposition \ref{Prop II.1}}).
\vspace{0.2cm}
\\To begin with, using the approach developped at the end of Section \ref{Subsection II.2}, inspired by that of \cite{GK}, it is easy to show that $(\partial M,g^{\circ})$ is infinitesimally spectrally rigid in the following sense :

\begin{prop}\label{Prop III.1}
Let $(M,g)$ be a smooth compact Riemannian manifold with boundary. Assume that $g^{\circ}$ is Anosov with simple length spectrum. Let $(g_{s})_{0\leqslant s\leqslant \varepsilon}$ be a Steklov isospectral deformation of the metric $g$ s.t there exists a family of smooth functions $\left( c_{s}:M\mapsto \mathbb{R^{+}_{*}}\right)_{0\leqslant s\leqslant \varepsilon} $ satisfying for any $s\in\left[ 0,\varepsilon\right]$, $$g_{s}=c_{s}g\hspace{0.1cm}.$$
Then the boundary deformation is flat at $s=0$ : $$\forall k\in\mathbb{N}^{*}\hspace{0.1cm},\hspace{0.1cm}\restriction{\partial^{k}_{s}c_{s}}{s=0}=0\hspace{0.2cm}\text{in $\partial M$.}$$
In particular, if $s\mapsto c_{s}$ is real-analytic, then for all $s\in\left[ 0,\varepsilon\right]$,
$$g_{s}^{\circ}=g^{\circ}\hspace{0.1cm}.$$    
\end{prop}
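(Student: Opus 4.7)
The plan is to use \textbf{Proposition \ref{Prop II.7}}, which ensures that along a Steklov isospectral deformation of metrics whose induced boundary metrics are Anosov with simple length spectrum, the variation $\dot{g}_{s}^{\circ}$ lies in the kernel of the X-ray transform $\mathcal{I}_{2}^{g_{s}^{\circ}}$ on symmetric $2$-tensors. In the conformal setting at hand, $g_{s}^{\circ}=\restriction{c_{s}}{\partial M}\,g^{\circ}$, so the tensor variation $\dot{g}_{s}^{\circ}=\restriction{\dot{c}_{s}}{\partial M}\,g^{\circ}$ is a scalar multiple of the metric itself. This special structure should collapse the $2$-tensor X-ray transform into an X-ray transform on functions, to which the injectivity result \textbf{Theorem \ref{Th II.5}} applies since $g_{s}^{\circ}$ is Anosov.

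Concretely, for a closed $g_{s}^{\circ}$-geodesic $\gamma_{s}$ in a free homotopy class $[\gamma]\in\mathcal{C}$ parametrized by $g_{s}^{\circ}$-arclength, the conformal relation gives $g^{\circ}(\dot{\gamma}_{s},\dot{\gamma}_{s})=c_{s}(\gamma_{s})^{-1}$ because $\dot{\gamma}_{s}$ has $g_{s}^{\circ}$-norm equal to $1$. A direct computation then yields
$$\mathcal{I}_{2}^{g_{s}^{\circ}}(\dot{g}_{s}^{\circ})([\gamma]) = \frac{1}{\ell_{g_{s}^{\circ}}([\gamma])}\int_{0}^{\ell_{g_{s}^{\circ}}([\gamma])}\frac{\dot{c}_{s}}{c_{s}}(\gamma_{s}(t))\,dt = \mathcal{I}_{0}^{g_{s}^{\circ}}\bigl(\partial_{s}\log \restriction{c_{s}}{\partial M}\bigr)([\gamma]).$$
By \textbf{Proposition \ref{Prop II.7}} this vanishes for every $[\gamma]\in\mathcal{C}$, and \textbf{Theorem \ref{Th II.5}} applied to the Anosov metric $g_{s}^{\circ}$ forces $\partial_{s}\log \restriction{c_{s}}{\partial M}\equiv 0$ on $\partial M$ for every $s\in[0,\varepsilon]$. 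Integrating in $s$ and using the initial condition $\restriction{c_{0}}{\partial M}\equiv 1$ (which follows from $g_{0}=g$) gives $\restriction{c_{s}}{\partial M}\equiv 1$, hence $g_{s}^{\circ}=g^{\circ}$ for all $s$.

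The only mildly delicate point is to keep the normalizations consistent when evaluating $\mathcal{I}_{2}^{g_{s}^{\circ}}(\dot{g}_{s}^{\circ})$: one must integrate the tensor $\dot{g}_{s}^{\circ}=\dot{c}_{s}\,g^{\circ}$ (and \emph{not} $\dot{c}_{s}\,g_{s}^{\circ}$) along $g_{s}^{\circ}$-unit-speed curves, which is exactly what produces the factor $c_{s}^{-1}$ and turns the expression into a plain X-ray transform of the function $\partial_{s}\log \restriction{c_{s}}{\partial M}$. Everything else is immediate; note that the simple length spectrum hypothesis is used only upstream, to guarantee that the principal wave trace invariant of \textbf{Theorem \ref{Th II.3}} reduces to a single orbit and thereby yields \textbf{Proposition \ref{Prop II.7}}, while the Anosov property enters only through \textbf{Theorem \ref{Th II.5}}.
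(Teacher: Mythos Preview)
Your proof is correct and follows essentially the same approach as the paper: both invoke \textbf{Proposition \ref{Prop II.7}} to obtain $\mathcal{I}_{2}^{g_{s}^{\circ}}\dot{g}_{s}^{\circ}=0$, exploit the conformal form $\dot{g}_{s}^{\circ}=\dot{c}_{s}g^{\circ}$ together with $g_{s}^{\circ}(\dot{\gamma}_{s},\dot{\gamma}_{s})=1$ to reduce this to $\mathcal{I}_{0}^{g_{s}^{\circ}}(\dot{c}_{s}/c_{s})=0$, apply \textbf{Theorem \ref{Th II.5}}, and integrate in $s$ from $c_{0}=1$. Your write-up even flags the same normalization subtlety the paper handles implicitly.
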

\begin{proof}
First, \textbf{Proposition \ref{Prop II.7}} combined with the injectivity of the X-ray transform on functions for closed Anosov manifolds (\textbf{Theorem \ref{Th II.5}}) gives $\restriction{\partial_{s}c_{s}}{s=0}=0$. Then, as already mentioned, the proof is done by induction on the order of derivation $k$ of the length maps $s\mapsto\mathcal{L}_{g_{s}^{\circ}}(\left[\gamma\right])$, $\left[\gamma\right]\in\mathcal{C}$. At each step, we combine \eqref{eqind} with the injectivity of the X-ray transform on functions.
\end{proof}

As mentioned above, under certain stronger assumptions, mainly about curvature, we can formulate a first result in the non-linear case by using Weyl's law (see \textbf{Proposition \ref{Prop II.1}}) and adapting arguments\footnote{Arguments originally put forward by A.El Soufi and S.Ilias in their work on conformal volume.} from Besson, Gallot and Courtois initially used to establish a similar spectral rigidity result for the Laplacian \cite[Proposition B.2]{BGC}. However, in our case, the statement is weaker since the total scalar curvature (i.e. the integral of scalar curvature) is not an invariant of the Steklov spectrum : 

\begin{Theo}\label{Th.IV.1}
    Let $(M,g)$ be a smooth compact Riemannian manifold of dimension $n\geq 4$ with boundary such that $(\partial M,g^{\circ})$ has a constant negative scalar curvature.
    \\Let $c:M\rightarrow \mathbb{R}^{+}_{*}$ be a smooth function satisfying 
    $$\mathrm{spec}(\Lambda_{cg})=\mathrm{spec}(\Lambda_{g})\hspace{0.1cm}.$$
If the total scalar curvatures at the boundary coincide 
    $$S_{(cg)^{\circ}}=S_{g^{\circ}}\hspace{0.1cm},$$ 
then 
    $$\restriction{c}{\partial M} \equiv 1\hspace{0.1cm}.$$
\end{Theo}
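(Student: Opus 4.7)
The plan is to reduce the statement to a rigidity property of the Yamabe functional in the conformal class $[g^{\circ}]$ on the closed $(n-1)$-dimensional manifold $\partial M$. Set $f := \restriction{c}{\partial M}$ and $m := n - 1 \geqslant 3$. The induced boundary metric then satisfies $(cg)^{\circ} = f\, g^{\circ}$, a conformal change of $g^{\circ}$ by a positive smooth function.

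First I would extract the equality of boundary volumes from the spectral hypothesis: by \textbf{Proposition \ref{Prop II.1}} and Steklov isospectrality,
$$Vol_{fg^{\circ}}(\partial M) = Vol_{g^{\circ}}(\partial M).$$
Next, writing $u := f^{(m-2)/4}$ so that $fg^{\circ} = u^{4/(m-2)}g^{\circ}$, the standard transformation rule for the scalar curvature, combined with integration by parts on the closed manifold $\partial M$, gives
$$S_{fg^{\circ}} = \int_{\partial M}\left(\frac{4(m-1)}{m-2}|\nabla u|_{g^{\circ}}^{2} + R_{g^{\circ}}\, u^{2}\right)dv_{g^{\circ}}, \qquad Vol_{fg^{\circ}}(\partial M) = \int_{\partial M} u^{2m/(m-2)}\,dv_{g^{\circ}}.$$
Consequently, the Yamabe quotient $Q(\tilde{g}) := S_{\tilde{g}}\, Vol_{\tilde{g}}(\partial M)^{-(m-2)/m}$ on the conformal class $[g^{\circ}]$ is exactly the Yamabe functional evaluated at $u$, and the two hypotheses of the theorem (volume equality from Weyl, scalar curvature equality by assumption) translate into $Q(fg^{\circ}) = Q(g^{\circ})$.

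I would then invoke the negative Yamabe case: since $R_{g^{\circ}}$ is a negative constant, the Yamabe invariant of $[g^{\circ}]$ is negative, $g^{\circ}$ itself is a minimizer of $Q$ in its conformal class, and this minimizer is unique up to a positive constant rescaling. The rigidity statement is classical and follows, for instance, from the maximum principle applied to the Yamabe equation (two positive solutions forcing each other to be equal up to scaling when the nonlinearity has a fixed negative sign). The equality $Q(fg^{\circ}) = Q(g^{\circ})$ then forces $fg^{\circ}$ to be another minimizer, hence $fg^{\circ} = \lambda\, g^{\circ}$ for some constant $\lambda > 0$; the volume coincidence yields $\lambda = 1$, so $f \equiv 1$, i.e. $\restriction{c}{\partial M} \equiv 1$.

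The main obstacle is really just to cite the correct rigidity statement in the negative Yamabe case, namely uniqueness up to rescaling of constant scalar curvature metrics in a conformal class of negative Yamabe type; once this is in hand, the argument is a direct conformal computation together with Weyl's law and the hypothesis on the total scalar curvatures. Note also that the restriction $n \geqslant 4$ in the theorem statement enters precisely here, since the conformal Laplacian formalism and the Yamabe exponent $2m/(m-2)$ require $m = n - 1 \geqslant 3$.
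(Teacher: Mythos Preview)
Your argument is correct, but it takes a different route from the paper. The paper follows the El Soufi--Ilias/Besson--Courtois--Gallot approach directly: writing $c=e^{-2u}$ and $h=cg$, it applies the conformal change formula for the scalar curvature on the $(n-1)$-dimensional boundary, integrates against $dv_{h^{\circ}}$, drops the nonnegative gradient term $(n-3)(n-2)\int |du|^{2}_{h^{\circ}}$, and then uses H\"older's inequality with exponents $p=\frac{n-1}{2}$, $q=\frac{n-1}{n-3}$ together with the volume equality from Weyl's law to obtain $-S_{h^{\circ}}\leq -S_{g^{\circ}}$. The hypothesis $S_{h^{\circ}}=S_{g^{\circ}}$ forces equality in H\"older, hence $u$ is constant on $\partial M$, and the volume constraint pins down the constant as $0$.

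Your version recasts exactly the same two ingredients (volume equality and total scalar curvature equality) as the single statement $Q(fg^{\circ})=Q(g^{\circ})$ for the Yamabe quotient, and then appeals to the classical uniqueness (up to scaling) of constant scalar curvature metrics in a conformal class of negative Yamabe type. This is cleaner conceptually and explains \emph{why} the argument works, but it imports a nontrivial result---the uniqueness of the negative Yamabe minimizer---as a black box, whereas the paper's H\"older argument is entirely self-contained. Both approaches make the same use of the dimensional restriction $n\geq 4$ (so that $m=n-1\geq 3$ and the relevant exponent is finite, resp.\ the coefficient $(n-3)(n-2)$ is positive).
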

\begin{proof}
The proof follows exactly that of \cite[Proposition B.2]{BGC}.
    \\Let us write $h=cg$ and $c=e^{-2u}$ with $u$ a smooth real-valued function on $M$.
    \\The well-known expression of the scalar curvature after conformal change gives on $\partial M$ : 
    
    $$scal(g^{\circ})=e^{-2u}\left(scal(h^{\circ})-2(n-2)\Delta_{h^{\circ}}u-(n-3)(n-2)\lvert du\rvert_{h^{\circ}}^{2}\right)\hspace{0.1cm},$$ 
    
    \medskip\noindent \\then by integrating with respect to the canonical measure of $(\partial M,h^{\circ})$ : 
    
    $$\int_{\partial M}e^{2u}\left(-scal(g^{\circ})\right)dv_{h^{\circ}}=\int_{\partial M}\left(-scal(h^{\circ})\right)dv_{h^{\circ}}+(n-3)(n-2)\int_{\partial M}\lvert du\rvert_{h^{\circ}}^{2}dv_{h^{\circ}}\hspace{0.1cm}.$$ 
    
    \medskip\noindent Indeed, since the integral of the divergence of a vector field on a closed manifold is zero (by the classical Stokes' theorem), we have  
    
    $$\int_{\partial M}\left(\Delta_{h^{\circ}}u\right) dv_{h^{\circ}}=\int_{\partial M}\mathop{\rm div_{h^{\circ}}}(\nabla_{h^{\circ}}u)dv_{h^{\circ}}=0\hspace{0.1cm}.$$
    
    \medskip\noindent Thus, as the scalar curvature of $(\partial M,g^{\circ})$ is constant and by the Hölder's inequality applied with exponents $p=\dfrac{n-1}{2}$ and $q=\dfrac{n-1}{n-3}$ we get : 
    
    $$\begin{aligned}[t]\int_{\partial M}\left(-scal(h^{\circ})\right)dv_{h^{\circ}} &\leq \int_{\partial M}e^{2u}\left(-scal(g^{\circ})\right)dv_{h^{\circ}}\\
    &\leq -scal(g^{\circ})\left(\underbrace{\int_{\partial M}e^{\left(n-1\right)u}dv_{h^{\circ}}}_{=Vol_{g^{\circ}}(\partial M)}\right)^{\frac{1}{p}}Vol_{h^{\circ}}(\partial M)^{\frac{1}{q}}\hspace{0.1cm}.\end{aligned}$$
    \\Since $g$ and $h$ are Steklov isospectral and the volume of the boundary $\partial M$ is an invariant for the Steklov spectrum (\textbf{Corollary \ref{Coro II.2}}), we have $Vol_{g^{\circ}}(\partial M)=Vol_{h^{\circ}}(\partial M)$ and consequently the previous inequality becomes :
    
    $$\begin{aligned}[t]\int_{\partial M}\left(-scal(h^{\circ})\right)dv_{h^{\circ}}&\leq -scal(g^{\circ})\left(Vol_{g^{\circ}}(\partial M)\right)^{\frac{1}{p}}Vol_{h^{\circ}}(\partial M)^{\frac{1}{q}}\\
    &\leq -scal(g^{\circ})Vol_{g^{\circ}}(\partial M)\hspace{0.1cm},\end{aligned}$$ 
    
    \medskip\noindent obviously the equality holds iff $\restriction{u}{\partial M}=0$ . \footnote{Recall that for $f$ and $g$ two integrable functions and $p$, $q$ conjugate exponents the equality in Hölder's inequality holds iff $$\lvert g\rvert^{q}=\lambda^{p}\lvert f\rvert^{p}\hspace{0.1cm}\text{almost everywhere,}$$ with $\lambda=\dfrac{\lVert g\rVert_{q}^{q/p}}{\lVert f\rVert_{p}}$. Here we have $f=e^{2u}$ and $g\equiv1$ on $\partial M$ so we get that the equality holds iff $$e^{\left(n-1\right)u}\equiv Vol_{g^{\circ}}(\partial M)/Vol_{h^{\circ}}(\partial M)=1\hspace{0.1cm}\text{on $\partial M$,}$$ iff $$\restriction{u}{\partial M}=0\hspace{0.1cm}.$$}
    \medskip\noindent\\By assumption, we do have 
    $$\int_{\partial M}\left(-scal(h^{\circ})\right)dv_{h^{\circ}}=\int_{\partial M}\left(-scal(g^{\circ})\right)dv_{g^{\circ}}=-scal(g^{\circ})Vol_{g^{\circ}}(\partial M)\hspace{0.1cm},$$ 
    
    \medskip\noindent so the equality in the inequality and therefore $\restriction{u}{\partial M}=0$ .
\end{proof}
\medskip\noindent In dimension $3$, the assumption on total scalar curvatures at the boundary is already verified which gives in particular this elementary statement :

\begin{prop}\label{Prop III.5}
    Let $\left( M,g\right)$ be a smooth compact Riemannian $3$-manifold with boundary such that $(\partial M,g^{\circ})$ has constant sectional curvature.
    \\Let $h:=cg$ be a metric conformal to $g$ and assume that $h^{\circ}$ has constant sectional curvature.\footnote{By the uniformization theorem it is always possible to find such metrics.} \\If 
    $$\mathrm{spec}(\Lambda_{cg})=\mathrm{spec}(\Lambda_{g})\hspace{0.1cm},$$
then
$$scal(h^{\circ})=scal(g^{\circ})\hspace{0.1cm}.$$

    \medskip\noindent In addition, if this scalar curvature is negative,
    $$\restriction{c}{\partial M} \equiv 1\hspace{0.1cm}.$$
\end{prop}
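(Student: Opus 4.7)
The plan is to treat the two assertions separately; both benefit from the fact that $\partial M$ is a closed surface, so that Gauss--Bonnet applies and the two-dimensional conformal change formula has no gradient term.

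For the first assertion, I would invoke the Gauss--Bonnet theorem on the surface $\partial M$, which gives
\[
\int_{\partial M} \operatorname{scal}(g^{\circ})\, dv_{g^{\circ}}
\; = \; 4\pi\,\chi(\partial M)
\; = \; \int_{\partial M} \operatorname{scal}(h^{\circ})\, dv_{h^{\circ}}.
\]
Since each scalar curvature is constant by hypothesis, these two integrals reduce to $\operatorname{scal}(g^{\circ})\cdot \mathrm{Vol}_{g^{\circ}}(\partial M)$ and $\operatorname{scal}(h^{\circ})\cdot \mathrm{Vol}_{h^{\circ}}(\partial M)$ respectively. Steklov isospectrality combined with Weyl's law (\textbf{Corollary \ref{Coro II.2}}) yields $\mathrm{Vol}_{g^{\circ}}(\partial M)=\mathrm{Vol}_{h^{\circ}}(\partial M)$, and dividing through produces $\operatorname{scal}(g^{\circ})=\operatorname{scal}(h^{\circ})$.

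For the second assertion, write $c=e^{-2u}$ so that $h^{\circ}=e^{-2u}\,g^{\circ}$, and specialize the conformal change formula appearing in the proof of \textbf{Theorem \ref{Th.IV.1}} to $n=3$. Since $(n-3)(n-2)=0$ the gradient term drops out, and denoting by $s<0$ the common value of $\operatorname{scal}(g^{\circ})=\operatorname{scal}(h^{\circ})$ obtained above, one gets the pointwise identity
\[
2\Delta_{h^{\circ}} u \; = \; s\,(1-e^{2u}) \qquad \text{on } \partial M,
\]
with the convention $\Delta_{h^{\circ}}=\operatorname{div}_{h^{\circ}}\nabla_{h^{\circ}}$ already adopted in the paper. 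At a global maximum $p_{M}$ of $u$, the max principle inequality $\Delta_{h^{\circ}}u(p_{M})\leqslant 0$ together with $s<0$ forces $1-e^{2u(p_{M})}\geqslant 0$, i.e.\ $u(p_{M})\leqslant 0$. Symmetrically, at a global minimum $p_{m}$, $\Delta_{h^{\circ}}u(p_{m})\geqslant 0$ gives $u(p_{m})\geqslant 0$. Combining, $\max_{\partial M} u\leqslant 0\leqslant \min_{\partial M} u$, which forces $u\equiv 0$ and hence $\restriction{c}{\partial M}\equiv 1$.

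The delicate point is really the first step, the extraction of a genuine curvature equality from purely spectral data: in dimension $n=3$ it is the topological rigidity provided by Gauss--Bonnet, together with the single spectral invariant available at principal order (the boundary volume), that makes this possible. Once the two constant curvatures are identified, the maximum principle argument in the second step is entirely classical and can be viewed as a quick proof of the well-known uniqueness of a constant negative curvature metric within a fixed conformal class on a closed surface of negative Euler characteristic. The argument is insensitive to connectedness of $\partial M$, since both Gauss--Bonnet and the maximum principle apply globally or, equivalently, component by component.
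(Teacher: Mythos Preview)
Your proof is correct and follows essentially the same route as the paper. For the first assertion the paper integrates the two-dimensional conformal change formula (the Laplacian term integrates to zero) to obtain $\int_{\partial M}\operatorname{scal}(h^{\circ})\,dv_{h^{\circ}}=\operatorname{scal}(g^{\circ})\,\mathrm{Vol}_{g^{\circ}}(\partial M)$ and then uses the volume equality; invoking Gauss--Bonnet directly, as you do, is simply the cleaner way to phrase the same identity. For the second assertion the paper writes down the same prescribed-curvature equation and then cites Aubin for uniqueness of the constant-negative-curvature representative in a conformal class; your maximum-principle argument is precisely the standard proof of that uniqueness, so your version is a bit more self-contained but otherwise identical in spirit.
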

\begin{proof}
As usual let us note $c=e^{2u}$ with $u$ a smooth real-valued function on $M$. The expression of the scalar curvature after conformal change in dimension $2$ leads to :
$$\int_{\partial M}scal(h^{\circ})dv_{h^{\circ}}=\int_{\partial M}e^{2u}scal(g^{\circ})dv_{h^{\circ}}=scal(g^{\circ})Vol_{g^{\circ}}(\partial M)\hspace{0.1cm}.$$

\medskip\noindent By isospectrality, $Vol_{h^{\circ}}(\partial M)=Vol_{g^{\circ}}(\partial M)$, and since $h^{\circ}$ has constant curvature we obviously deduce that 
$$scal(h^{\circ})=scal(g^{\circ})\hspace{0.1cm}.$$
Therefore, $\varphi:=2\restriction{u}{\partial M}$ satisfies the equation 
$$-\Delta_{h^{\circ}}\varphi +\kappa=\kappa e^{\varphi}\hspace{0.1cm},$$
where $\kappa:=scal(g^{\circ})$. But if $\kappa$ is negative, it is easy to see that $\varphi\equiv 0$ is the unique solution and, equivalently, that there is one and only one metric with scalar curvature $\kappa$ in the conformal class of $g^{\circ}$. We refer, for example, to the proof of \cite[Theorem 6.8]{Aub} and more generally to T.Aubin's work on the prescription of scalar curvature.
\end{proof}

With the same idea of using volume as a spectral invariant, we could formulate a second type of result for the non-linear problem, based on the relationship established in \cite[Theorem 1.2]{CrDai} (via arguments due to Katok \cite{Kat}) between the (marked) lengths of closed geodesics and volume, but the statement is too unsatisfactory to appear here. In particular, the assumption appearing in \cite[Theorem 1.2]{CrDai} is a monotonicity condition on the marked length spectrum, and it is not clear how often this condition is true.

\section{A suitable expression of the DN map}\label{App B}

Here we provide a useful identity for the calculation of the subprincipal symbol performed in \textbf{Lemma \ref{Prop III.12}}. This easy-to-establish identity is undoubtedly very well known, but the author has found no clear reference for it, so it is reproduced here. 

\begin{Lem}\label{lem1.2.1}
Let $c:M\rightarrow \mathbb{R}^{+}_{*}$ be a smooth function satisfying 
$\restriction{c}{\partial M} \equiv 1$. \\Then 
$$\Lambda_{cg}=\Lambda_{g,-q_{c}}-\dfrac{n-2}{4}\left(\restriction{\left(\partial_{\nu}c\right)}{\partial M}\right)Id\hspace{0.1cm},$$ where 
$$q_{c}=c^{\frac{n-2}{4}}\Delta_{cg}(c^{-\frac{n-2}{4}})\hspace{0.1cm}.$$
\end{Lem}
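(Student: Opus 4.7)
The natural strategy is to reduce the Dirichlet problem for $\Delta_{cg}$ to a Schr\"odinger--type Dirichlet problem for $g$ by means of a conformal substitution. Given $f\in H^{1/2}(\partial M)$, let $u\in H^{1}(M)$ solve $\Delta_{cg}u=0$ in $M$ with $u|_{\partial M}=f$, so that by definition $\Lambda_{cg}f=\partial_{\nu}u|_{\partial M}$. Observe that since $c|_{\partial M}\equiv 1$, the metrics $g$ and $cg$ coincide on $\partial M$, as do their outward unit normals; in particular the symbol $\partial_{\nu}$ on $\partial M$ is unambiguous for both metrics. The key substitution is then $v:=c^{(n-2)/4}u$, whose boundary trace equals $f$ by the normalization of $c$, and the goal is to characterize $v$ as the Dirichlet solution of a Schr\"odinger operator for $g$ with potential expressible through $q_{c}$.

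The first step is the conformal Laplacian identity. Using the formula
$$\Delta_{cg}\;=\;c^{-1}\Delta_{g}\,+\,\tfrac{n-2}{2}\,c^{-2}\,g^{ij}\,\partial_{i}c\,\partial_{j}$$
together with the Leibniz rule applied to $c^{-(n-2)/4}v$, one verifies that the exponent $-(n-2)/4$ is precisely the one for which the first-order cross term $\langle\nabla c,\nabla v\rangle_{g}$ produced by Leibniz cancels identically against the drift term $\tfrac{n-2}{2}c^{-2}g^{ij}\partial_{i}c\,\partial_{j}$ in $\Delta_{cg}$. Specializing the resulting expansion to $v=1$ is, by construction, exactly how the function $q_{c}=c^{(n-2)/4}\Delta_{cg}(c^{-(n-2)/4})$ enters, and the identity takes the form $\Delta_{cg}(c^{-(n-2)/4}v)=c^{\kappa}(-\Delta_{g}-q_{c})v$ for a suitable power $\kappa$ of $c$. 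Equivalently, this is a transcription of the classical conformal transformation rule of the conformal Laplacian $L_{g}=-\Delta_{g}+\tfrac{n-2}{4(n-1)}R_{g}$, which additionally yields the closed-form expression $q_{c}=\tfrac{n-2}{4(n-1)}(R_{cg}-c^{-1}R_{g})$. In either formulation, $\Delta_{cg}u=0$ is equivalent to $(-\Delta_{g}-q_{c})v=0$, and since $v|_{\partial M}=f$, the function $v$ is precisely the Dirichlet solution defining $\Lambda_{g,-q_{c}}f=\partial_{\nu}v|_{\partial M}$.

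The remaining step is a boundary identification via the product rule. Differentiating $v=c^{(n-2)/4}u$ in the normal direction and restricting to $\partial M$, where $c\equiv 1$, $u=f$, and the unit normals of $g$ and $cg$ coincide:
$$\partial_{\nu}v\big|_{\partial M}\;=\;\tfrac{n-2}{4}\,\big(\partial_{\nu}c\big|_{\partial M}\big)\,f\,+\,\partial_{\nu}u\big|_{\partial M}\;=\;\tfrac{n-2}{4}\,\big(\partial_{\nu}c\big|_{\partial M}\big)\,f\,+\,\Lambda_{cg}f.$$
Rearranging yields $\Lambda_{cg}f=\Lambda_{g,-q_{c}}f-\tfrac{n-2}{4}(\partial_{\nu}c|_{\partial M})\,f$ for every $f$, which is the desired operator identity. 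The only nontrivial piece of the argument is the conformal identity in the first step: the subtle point is that the weight $-(n-2)/4$ must be chosen exactly so as to annihilate the first-order drift produced by $\Delta_{cg}$, leaving a clean zeroth-order residue equal (by construction) to $q_{c}$; this is the classical Yamabe/conformal-Laplacian trick. Once that cancellation is in place, the remainder of the proof is routine bookkeeping based on the normalization $c|_{\partial M}\equiv 1$.
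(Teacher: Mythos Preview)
Your proof is correct and follows essentially the same route as the paper: the substitution $v=c^{(n-2)/4}u$, the conformal intertwining identity $\Delta_{cg}=c^{-(n+2)/4}(\Delta_{g}+q_{c})\,c^{(n-2)/4}$ reducing the $cg$-harmonic Dirichlet problem to the Schr\"odinger problem $\mathcal{L}_{g,-q_{c}}v=0$, and then the product rule on $\partial M$ using $c|_{\partial M}\equiv 1$. Your additional remarks on why the exponent $-(n-2)/4$ kills the first-order drift and the link to the conformal Laplacian are correct elaborations, not a different argument.
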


\begin{proof}
It is well-known that the Laplace-Beltrami operator transforms under conformal scaling of the metric by 
$$\Delta_{cg}=c^{-\frac{n+2}{4}}\left(\Delta_{g}+q_{c}\right)c^{\frac{n-2}{4}}\hspace{0.1cm}.$$ 

\medskip\noindent Thus, for any $f\in H^{1/2}(\partial M)$, $u$ is solution in $H^{1}(M)$ of 
$$\left\{\begin{aligned}
 -\Delta_{cg}u &= 0 \\
\restriction{u}{\partial M} &= f \\
\end{aligned}\right.$$
iff 
$$v:=c^{\frac{n-2}{4}}u$$ 

\medskip\noindent is solution in $H^{1}(M)$ of the following Dirichlet problem 

$$\left\{\begin{aligned}
 \mathcal{L}_{g,-q_{c}}v &= 0 \\
\restriction{v}{\partial M} &= f \\
\end{aligned}\right.\hspace{0.5cm},$$

\medskip\noindent where $\mathcal{L}_{g,-q_{c}}:=-\Delta_{g}-q_{c}$\hspace{0.1cm}.
Therefore, 
$$\begin{aligned}
    \Lambda_{cg}f:=\restriction{\partial_{\nu}u}{\partial M}&=\restriction{\partial_{\nu}(c^{-\frac{n-2}{4}}v)}{\partial M} \\ &= -\frac{n-2}{4}\restriction{\left((c^{-\frac{n+2}{4}}\partial_{\nu}c)v\right)}{\partial M} + \restriction{\left(c^{-\frac{n-2}{4}}\partial_{\nu}v\right)}{\partial M} \\
\end{aligned}$$
and since $\restriction{v}{\partial M}=f$,
$$\begin{aligned}
  \Lambda_{cg}f &=-\frac{n-2}{4}\restriction{\left((\partial_{\nu}c)v\right)}{\partial M} + \restriction{\partial_{\nu}v}{\partial M} \\
  &=  -\frac{n-2}{4}\restriction{\left(\partial_{\nu}c\right)}{\partial M}f + \Lambda_{g,-q_{c}}f\hspace{0.1cm}.\\
\end{aligned}$$
\end{proof}

\begin{Rq}\label{Rk III.9}
If $M$ is of dimension $2$, we easily deduce the well-known conformal invariance of the DN map i.e 
$$\Lambda_{cg}=\Lambda_{g}\hspace{0.1cm},$$ if 
$$\restriction{c}{\partial M} \equiv 1\hspace{0.1cm}.$$
\end{Rq}

\medskip\noindent The above formula can be refined with the :

\begin{prop}\label{Prop B.3}
    Let $q\in C^{\infty}(M)$ such that $0$ is not a Dirichlet eigenvalue of $-\Delta_{g}+q$ so that $$\Lambda_{g,q}:H^{1/2}(\partial M)\rightarrow H^{-1/2}(\partial M)$$ is well defined. \\Then 
    $$\Lambda_{g,q}=\Lambda_{g}+R_{-1}\hspace{0.3cm},$$ 
    
    \medskip\noindent where $R_{-1}\in\Psi^{-1}(\partial M)$ has for principal symbol 
    
    $$\sigma_{R_{-1}}(x,\xi)=\frac{q^{\circ}(x)}{2\lvert\xi\rvert_{g^{\circ}}}\hspace{0.3cm}.$$
\end{prop}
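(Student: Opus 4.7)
The plan is to construct $\Lambda_{g,q}$ via the standard microlocal factorization of the Dirichlet problem in boundary normal coordinates and to read off the modification induced by $q$ by comparing full symbols order by order with the potential-free case. In $g$-boundary normal coordinates $(x',x_n)$ near $\partial M$, with $x_n\geqslant 0$ the distance to $\partial M$, the metric takes the form $g = dx_n\otimes dx_n + g^{\circ}_{x_n}$, where $g^{\circ}_{x_n}$ is a smooth family of Riemannian metrics on $\partial M$ with $g^{\circ}_0 = g^{\circ}$. The operator reads
$$-\Delta_g + q \;=\; -\partial_n^{2} - E(x',x_n)\,\partial_n - \Delta_{g^{\circ}_{x_n}} + q,$$
with $E = \tfrac{1}{2}\partial_n \log \det g^{\circ}_{x_n}$ smooth up to the boundary, and the outward normal at $\partial M$ satisfies $\nu = -\partial_n$, so that $\Lambda_{g,q}f = -\partial_n u \rvert_{x_n=0}$ whenever $u$ solves the Dirichlet problem with $u\rvert_{\partial M}=f$.

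Following the classical parametrix construction (see for example \cite{LU} and \cite{Tay}), one seeks a classical pseudodifferential operator $A = A(x_n)\in\Psi^{1}(\partial M)$, depending smoothly on $x_n$ and with positive principal symbol, such that modulo $\Psi^{-\infty}$,
$$-\Delta_g + q \;\equiv\; -(\partial_n + E - A)(\partial_n + A).$$
The branch of $A$ with positive principal symbol is the one compatible with the Dirichlet condition, and a short computation then gives $\Lambda_{g,q} \equiv A(0) \pmod{\Psi^{-\infty}}$. Expanding $A$ in classical homogeneous symbols $a \sim a_1 + a_0 + a_{-1} + \cdots$ and equating homogeneous components of the symbolic identity
$$a \# a \;=\; E\,a \;+\; \partial_n a \;-\; \sigma^{\mathrm{full}}\!\left(\Delta_{g^{\circ}_{x_n}}\right) \;+\; q$$
yields a recursion. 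At order $2$, $a_1^{2} = \lvert\xi'\rvert^{2}_{g^{\circ}_{x_n}}$ gives $a_1 = \lvert\xi'\rvert_{g^{\circ}_{x_n}}$, independent of $q$; the order $1$ equation determines $a_0$ from $a_1$, $E$ and the geometry of $g^{\circ}_{x_n}$ alone, and is therefore also independent of $q$.

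The first occurrence of $q$ is at homogeneity order $0$, where the equation schematically reads $2\,a_1\,a_{-1} + F\!\left[a_1,a_0,E,\delta_0\right] = q$, with $F$ a universal expression in the previously determined symbols (not involving $q$) and $\delta_0$ the order zero part of the full symbol of $\Delta_{g^{\circ}_{x_n}}$. Subtracting the analogous relation for $q=0$ gives immediately
$$a_{-1}^{(q)} \;-\; a_{-1}^{(0)} \;=\; \frac{q}{2\,\lvert\xi'\rvert_{g^{\circ}_{x_n}}},$$
and an easy induction shows that the remaining differences $a_{-k}^{(q)}-a_{-k}^{(0)}$, $k\geqslant 2$, stay in homogeneity range $\leqslant -2$, so that $\Lambda_{g,q}-\Lambda_g\in\Psi^{-1}(\partial M)$ with principal symbol obtained by evaluating the displayed formula at $x_n = 0$, namely $q^{\circ}(x)/(2\lvert\xi\rvert_{g^{\circ}})$. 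The only delicate step is the justification of the factorization modulo smoothing and the identification $\Lambda_{g,q}\equiv A(0)\pmod{\Psi^{-\infty}}$, including the selection of the positive branch of $a_1$ forced by the Dirichlet condition; this is a standard microlocal construction (see \cite{LU}, \cite{Tay}, \cite{DDSF}, \cite{Cek}) that can be invoked directly, and the genuinely new content of the proposition is simply the observation that, being of order zero, $q$ first appears in the recursion at homogeneity degree $-1$ with coefficient $1/(2a_1)$.
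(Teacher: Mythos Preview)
Your argument is correct and follows essentially the same route as the paper: both rely on the Lee--Uhlmann/DDSF factorization in boundary normal coordinates and compare the resulting full symbols of $\Lambda_{g,q}$ and $\Lambda_g$. The only difference is one of presentation---the paper simply cites \cite[Lemma 8.6]{DDSF} and \cite[Section 1]{LU} for the symbol expansions and declares the result a direct consequence, whereas you sketch the recursion explicitly and isolate the step at which $q$ first enters.
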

\begin{proof}
This result is a direct consequence of the expressions in boundary normal coordinates for the full symbols of the pseudodifferential operators $\Lambda_{g,q}$ and $\Lambda_{g}$ computed, for example, in \cite[Lemma 8.6]{DDSF} and \cite[Section 1]{LU} respectively.   
\end{proof}

\medskip\noindent In particular, we deduce the following :

\begin{cor}\label{coro 1.3.1}
Let $c:M\rightarrow \mathbb{R}^{+}_{*}$ be a smooth function satisfying $\restriction{c}{\partial M} \equiv 1$. \\Then 
$$\Lambda_{cg}=\Lambda_{g}+\alpha_{n}\left(\restriction{\left(\partial_{\nu}c\right)}{\partial M}\right)Id + R_{-1}\hspace{0.3cm},$$ 

\medskip\noindent where $\alpha_{n}=-\dfrac{n-2}{4}$ and $R_{-1}\in\Psi^{-1}(\partial M)$.
\end{cor}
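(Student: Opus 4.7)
The proof proposal is a short two-step concatenation of the two auxiliary results already stated just above in Appendix \ref{App B}.

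First, I would apply \textbf{Lemma \ref{lem1.2.1}} to rewrite the conformal DN map as
$$\Lambda_{cg}=\Lambda_{g,-q_{c}}-\dfrac{n-2}{4}\left(\restriction{\left(\partial_{\nu}c\right)}{\partial M}\right)Id\hspace{0.1cm},$$
with $q_{c}=c^{\frac{n-2}{4}}\Delta_{cg}(c^{-\frac{n-2}{4}})$. This already isolates the multiplication term with the correct constant $\alpha_{n}=-\tfrac{n-2}{4}$, so the whole task reduces to showing that $\Lambda_{g,-q_{c}}-\Lambda_{g}$ lies in $\Psi^{-1}(\partial M)$.

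Second, I would invoke \textbf{Proposition \ref{Prop B.3}} with potential $q=-q_{c}$, which gives directly $\Lambda_{g,-q_{c}}=\Lambda_{g}+R_{-1}$ for some $R_{-1}\in\Psi^{-1}(\partial M)$ (whose principal symbol is $-q_{c}^{\circ}(x)/(2|\xi|_{g^{\circ}})$, although this explicit form is not needed for the present statement). Substituting this into the identity obtained in the first step yields
$$\Lambda_{cg}=\Lambda_{g}+\alpha_{n}\left(\restriction{\left(\partial_{\nu}c\right)}{\partial M}\right)Id + R_{-1}\hspace{0.1cm},$$
which is exactly the claim.

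The only point that deserves a brief check is the hypothesis of \textbf{Proposition \ref{Prop B.3}}, namely that $0$ is not a Dirichlet eigenvalue of $-\Delta_{g}-q_{c}$. But this is automatic from the construction of $q_{c}$: by the conformal transformation law used in the proof of \textbf{Lemma \ref{lem1.2.1}}, the Dirichlet problem for $-\Delta_{g}-q_{c}$ is conjugate via $v=c^{(n-2)/4}u$ to the Dirichlet problem for $-\Delta_{cg}$, whose kernel is trivial since the metric $cg$ is Riemannian. Hence the two DN maps $\Lambda_{g,-q_{c}}$ and $\Lambda_{cg}$ are both well defined, \textbf{Proposition \ref{Prop B.3}} applies without modification, and the corollary follows. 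There is no genuine obstacle here; the content is entirely in the two preceding results, and the corollary merely packages them in the form needed for \textbf{Proposition \ref{Prop III.12}}.
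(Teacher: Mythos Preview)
Your proposal is correct and is exactly the argument the paper has in mind: the corollary is stated immediately after \textbf{Proposition \ref{Prop B.3}} with the words ``In particular, we deduce the following'' and no written proof, so combining \textbf{Lemma \ref{lem1.2.1}} with \textbf{Proposition \ref{Prop B.3}} is precisely the intended derivation. Your additional verification that $0$ is not a Dirichlet eigenvalue of $-\Delta_{g}-q_{c}$ is a careful touch that the paper leaves implicit.
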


\bibliographystyle{alphaurl}
\bibliography{bibl}

\end{document}